\documentclass[12pt]{amsart}
\usepackage{amsmath}
\usepackage{amstext}
\usepackage{amsfonts}
\usepackage{amssymb}
\usepackage{amsthm}
\usepackage{amsrefs}
\usepackage{bbm}

\usepackage{microtype}
\usepackage[colorlinks=true, allcolors=blue]{hyperref}

\theoremstyle{plain}
\newtheorem{thm}{Theorem}[section]

\newtheorem{cor}[thm]{Corollary}
\newtheorem{prop}[thm]{Proposition}

\theoremstyle{definition}
\newtheorem{defn}[thm]{Definition}
\newtheorem{exam}[thm]{Example}
\newtheorem{rem}[thm]{Remark}

\newtheorem{problem}[thm]{Problem}

\newcommand{\bC}{{\mathbb{C}}}

\newcommand{\bN}{{\mathbb{N}}}
\newcommand{\bR}{{\mathbb{R}}}
\newcommand{\bT}{{\mathbb{T}}}

\newcommand{\B}{{\mathcal{B}}}

\newcommand{\M}{{\mathcal{M}}}

\newcommand{\rA}{{\mathrm{A}}}
\newcommand{\rB}{{\mathrm{B}}}
\newcommand{\rC}{{\mathrm{C}}}

\renewcommand{\phi}{\varphi}

\newcommand{\id}{\operatorname{id}}
\newcommand{\1}{\mathbbm{1}}

\newcommand{\tr}{\mathrm{tr}}

\renewcommand{\Re}{\mathrm{Re}}
\renewcommand{\Im}{\mathrm{Im}}
\newcommand{\sa}{\operatorname{sa}}

\begin{document}
\title[Noncommutative Majorization]{Noncommutative majorization}

\author[M. Kennedy]{Matthew Kennedy}
\address{Department of Pure Mathematics\\ University of Waterloo\\
Waterloo, Ontario \; N2L 3G1 \\Canada}
\email{matt.kennedy@uwaterloo.ca}

\author[P. Skoufranis]{Paul Skoufranis}
\address{Department of Mathematics and Statistics\\ York University\\
Toronto, Ontario \: M3J 1P3 \\Canada}
\email{pskoufra@yorku.ca}

\begin{abstract}
We introduce a theory of noncommutative majorization that extends the classical majorization theory introduced by Hardy, Littlewood and P\'{o}lya to tuples of self-adjoint matrices that do not necessarily commute. We define and characterize a noncommutative majorization order that extends the classical majorization order. The definition is in terms of convex noncommutative functions, and we utilize the noncommutative convexity theory and noncommutative Choquet theory recently introduced by Davidson and the first author. As an application, we obtain a new necessary and sufficient condition for the existence of a trace-preserving completely positive map, i.e. a quantum channel, that interpolates between two finite sets of matrices. We give examples demonstrating that it is not always possible for this map to be chosen mixed unitary, even locally. We also address the computational difficulty of verifying the noncommutative majorization order. Our results further apply beyond the tracial case, and we obtain more general results characterizing the existence of unital completely positive maps that preserve an arbitrary faithful state or even unital completely positive map.
\end{abstract}

\subjclass[2020]{}
\keywords{}
\thanks{First author partially supported by research grant from NSERC (Canada). Second author partially supported by research grant from NSERC (Canada). }
\maketitle
\tableofcontents

\section{Introduction}

The mathematical theory of majorization was introduced and studied by Hardy, Littlewood and P\'{o}lya in their book {\em Inequalities} \cite{HLP1952}, in order to synthesize a number of seemingly disparate ideas appearing in the literature at the time. The theory has subsequently become an important component of convex analysis, with numerous applications throughout mathematics, economics and, more recently, quantum information theory.

In this paper, we will introduce a notion of noncommutative majorization that considerably extends the scope of classical majorization theory. Inspired by the deep relationship between classical majorization theory and classical Choquet theory, we will utilize noncommutative convexity theory and noncommutative Choquet theory, which were recently introduced by Davidson and the first author in \cite{DK2019}. In order to increase the accessibility of this paper, we have included some exposition of this material.

The starting point for classical majorization theory is the majorization order. Originally defined for real vectors, the definition naturally extends to complex self-adjoint matrices. For $n \in \bN$, let $\M_n$ denote the space of $n \times n$ complex matrices, and let $\tr_n : \M_n \to \bC$ denote the normalized trace, i.e. the trace satisfying $\tr_n(1_n) = 1$, where $1_n \in \M_n$ is the identity matrix.

\begin{defn} \label{intro-defn:majorization}
For $n \in \bN$, let $a,b \in \M_n$ be self-adjoint matrices and let $C \subseteq \mathbb{R}$ be a compact convex set containing the eigenvalues of $a$ and $b$. Then $a$ is {\em majorized} by $b$, written $a \prec b$, if
\[
\tr_n(f(a)) \leq \tr_n(f(b))
\]
for every continuous convex function $f : C \to \bR$.
\end{defn}

There are many equivalent characterizations of the majorization order, which is one important reason for the utility of majorization theory.  We now recall two of the most important of these equivalent characterizations. The first, which is in terms of trace-preserving unital completely positive maps, i.e. bistochastic quantum channels, is due to Uhlmann \cite{U1970}. The second, which is in terms of unitary orbits, is an easy consequence of the Birkhoff-von Neumann Theorem and the above-mentioned results in \cite{HLP1952}.

\begin{thm} \label{intro-thm:classical-characterizations}
For $n \in \bN$ and self-adjoint matrices $a,b \in \M_n$, the following are equivalent:
\begin{enumerate} 
\item $a \prec b$;
\item There is a trace-preserving unital completely positive map, i.e. a bistochastic quantum channel, $\Phi : \M_n \to \M_n$ such that $\Phi(b) = a$;
\item The matrix $a$ belongs to the convex hull of the unitary orbit of $b$, i.e. there is $k \in \bN$ and unitary matrices $u_1,\ldots,u_k \in \M_n$ such that
\[
a = \frac{1}{k} \sum_{i=1}^k u_i b u_i^*.
\]
\end{enumerate}
\end{thm}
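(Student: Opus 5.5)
The plan is to prove the cycle $(3) \Rightarrow (2) \Rightarrow (1) \Rightarrow (3)$, reducing everything to eigenvalue vectors and the classical Hardy--Littlewood--P\'olya and Birkhoff--von Neumann theorems.

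For $(3) \Rightarrow (2)$, define $\Phi(x) = \frac{1}{k}\sum_{i=1}^k u_i x u_i^*$. This map is completely positive, unital and trace preserving, and it sends $b$ to $a$.

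For $(2) \Rightarrow (1)$, fix a continuous convex $f : C \to \bR$. The key inequality is
\[
\tr_n(f(\Phi(b))) \leq \tr_n(\Phi(f(b))) = \tr_n(f(b)).
\]
I would prove it by hand rather than cite a Jensen-type operator inequality, which would require operator convexity. Diagonalize $\Phi(b) = a = \sum_j \alpha_j e_j e_j^*$ and $b = \sum_k \beta_k f_k f_k^*$, and set
\[
d_{jk} = \langle \Phi(f_k f_k^*) e_j, e_j\rangle \geq 0.
\]
Unitality gives $\sum_k d_{jk} = 1$, and trace preservation gives $\sum_j d_{jk} = 1$. We also have $\alpha_j = \sum_k d_{jk}\beta_k$. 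Applying scalar convexity row by row gives $f(\alpha_j) \leq \sum_k d_{jk} f(\beta_k)$. Summing over $j$ and using the column sums then gives $\sum_j f(\alpha_j) \leq \sum_k f(\beta_k)$, which is (1). As a byproduct, this shows $\alpha = D\beta$ with $D$ doubly stochastic.

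For $(1) \Rightarrow (3)$, write $\alpha^\downarrow$ and $\beta^\downarrow$ for the decreasing eigenvalue vectors. Taking $f(t) = t$ and $f(t) = -t$ gives equal traces. Taking $f(t) = \sum_j (t-\beta_j)_+$-type functions, specifically $f(t) = (t - s)_+$ for suitable $s$, gives the partial sum inequalities $\sum_{j\le m}\alpha^\downarrow_j \le \sum_{j\le m}\beta^\downarrow_j$. To get them, use $\sum_{j\le m} x^\downarrow_j = \min_s \bigl(ms + \sum_j (x_j - s)_+\bigr)$ and choose $s = \beta^\downarrow_m$.

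By the Hardy--Littlewood--P\'olya theorem there is a doubly stochastic $D$ with $\alpha = D\beta$. By Birkhoff--von Neumann, $D$ is a convex combination $\sum_i \lambda_i P_{\sigma_i}$ of permutation matrices. Conjugating by the diagonalizing unitaries then gives
\[
a = \sum_i \lambda_i v_i b v_i^*.
\]

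The main obstacle is that (3) demands \emph{equal} weights $1/k$, while Birkhoff gives arbitrary convex weights $\lambda_i$. If the $\lambda_i$ are rational, repeating unitaries fixes this. In general I would use a perturbation argument. The set $\{D : \alpha = D\beta\}$ is a polytope defined by rational-coefficient constraints in the data only if $\alpha,\beta$ are rational, so instead I would look for a second source of averaging to supply equal weights.

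What I would try first is a standard trick: any convex combination of two unitary conjugates of $b$, say $\lambda u b u^* + (1-\lambda) v b v^*$, can be written as an equal-weight average of $2$ unitary conjugates. One way is to choose unitaries $w_\pm$ in the two-dimensional span realizing the mixing, using the fact that each $P_{\sigma}$ is a product of transpositions and a transposition-mixing $\lambda P_{\tau} + (1-\lambda)I$ acts as a $2\times 2$ doubly stochastic matrix. A $2\times 2$ doubly stochastic matrix is unistochastic, and moreover equals $\frac12(|U|^{\circ 2} + |V|^{\circ 2})$ for suitable rotations. This realizes each T-transform as an equal average of two unitary conjugations at the level of diagonals.

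Since $\alpha$ is obtained from $\beta$ by finitely many T-transforms (at most $n-1$, by the HLP construction), composing these equal-weight averages yields $a$ as an average of $2^{n-1}$ unitary conjugates of $b$ with equal weights. I would verify this last composition step carefully, since it is where the off-diagonal terms must be shown to cancel. That cancellation is precisely the reason for using pairs of rotations by $\pm\theta$, whose cross terms have opposite signs.
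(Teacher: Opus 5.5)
Your proposal is correct. The paper gives no proof of this theorem; it attributes $(1)\Leftrightarrow(2)$ to Uhlmann and $(1)\Leftrightarrow(3)$ to Hardy--Littlewood--P\'olya plus Birkhoff--von Neumann, so the comparison is with that cited route.

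For $(2)\Rightarrow(1)$ you replace the Uhlmann citation with the doubly stochastic matrix $d_{jk}=\langle\Phi(f_kf_k^*)e_j,e_j\rangle$. This is elementary and uses only positivity of $\Phi$.

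The more substantive difference is in $(1)\Rightarrow(3)$. Birkhoff--von Neumann yields $a=\sum_i\lambda_i v_ibv_i^*$ with arbitrary convex weights. That shows membership in the convex hull of the unitary orbit, but not by itself the equal weights $1/k$ written in (3), so the cited route needs an extra step there. Your T-transform argument supplies that step directly. For $D=\operatorname{diag}(\beta)$, a T-transform $T=\lambda I+(1-\lambda)Q_{ij}$, and a rotation $R_\theta$ in the $(i,j)$-plane with $\cos^2\theta=\lambda$, the identity $\operatorname{diag}(T\beta)=\frac12(R_\theta DR_\theta^*+R_{-\theta}DR_{-\theta}^*)$ holds exactly as matrices. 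Iterating over the $r\le n-1$ T-transforms therefore exhibits $a$ as an equal-weight average of $2^r$ unitary conjugates of $b$; pad with $\theta=0$ if you want exactly $2^{n-1}$.

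Two remarks for the write-up:
\begin{enumerate}
\item The composition step needs no further cancellation check. Each intermediate average is already exactly diagonal and conjugation is linear.
\item Drop the Birkhoff--von Neumann detour and the perturbation remark. Also drop the general claim that every $\lambda ubu^*+(1-\lambda)vbv^*$ is an equal average of two conjugates: you do not justify it, and you only use its T-transform special case.
\end{enumerate}

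An alternative standard route to equal weights is Schur--Horn. Choose $v$ with $\operatorname{diag}(vbv^*)=\alpha$, then pinch with $w=\operatorname{diag}(1,\zeta,\dots,\zeta^{n-1})$, $\zeta=e^{2\pi i/n}$, to get $\operatorname{diag}(\alpha)=\frac1n\sum_{j=0}^{n-1}w^jvbv^*w^{-j}$, so $k=n$. That is shorter once Schur--Horn is granted, while yours is self-contained from the HLP lemma.
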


One of our primary motivations for introducing a noncommutative theory of majorization is to generalize the equivalence between (1) and (2) in Theorem~\ref{intro-thm:classical-characterizations}. In one direction, we seek to replace single self-adjoint matrices with tuples of self-adjoint matrices that do not necessarily commute. In another direction, we seek to replace trace-preserving unital completely positive maps with unital positive maps that preserve an arbitrary given faithful unital completely positive map. Specifically, we are motivated by the following problem. 
For $m,n \in \bN$, recall that a positive linear map $\psi : \M_n \to \M_m$ is {\em faithful} if $\ker \psi \cap \M_n^+ = 0$, where $\M_n^+$ denotes the set of positive matrices in $\M_n$.  

\begin{problem} \label{intro-problem:motivation}
For $m,n \in \bN$, let $\psi : \M_n \to \M_m$ be a faithful unital completely positive map. For $k \in \bN$, let $a = (a_1,\ldots,a_k), b = (b_1,\ldots,b_k) \in \M_n^k$ be $k$-tuples of self-adjoint matrices and let $\B \subseteq \M_n$ denote the unital C*-algebra generated by $b_1,\ldots,b_k$. When does there exist a unital completely positive map $\Phi : \B \to \M_n$ such that $\psi \circ \Phi = \psi$ and $\Phi(b_i) = a_i$ for all $1 \leq i \leq k$? 
\end{problem}

If $\psi$ is the normalized trace $\tr_n$ on $\M_n$, then it follows from precomposing $\Phi$ with a trace-preserving conditional expectation onto $\B$ that Problem \ref{intro-problem:motivation} is equivalent to asking for a trace-preserving unital completely positive map $\Phi : \M_n \to \M_n$ such that $\Phi(b_i) = a_i$ for all $1 \leq i \leq k$. This version of the problem has numerous applications in quantum information theory, and has recently a received significant amount of attention. Note that for $k = 1$, classical majorization theory in the form of Theorem \ref{intro-thm:classical-characterizations}, provides a beautiful solution: $\Phi$ exists if and only if $a \prec b$, and, by (3), $\Phi$ can be taken in a particularly nice form. On the other hand, for $k \geq 2$, there are two solutions to this problem with a much different flavour. It was shown in \cite{HKM2017} that this problem can be formulated as a semidefinite program, and a solution was obtained in \cite{GJBDM2018} in terms of an infinite family of entropy conditions on $a$ and $b$.

We will provide a new solution to the trace-preserving version of this problem for all $k \in \bN$ that is conceptually much closer to the solution provided by classical majorization theory for the case $k = 1$. More generally, utilizing noncommutative majorization theory, we will provide a complete solution to Problem~\ref{intro-problem:motivation}. Specifically, we will introduce and characterize a noncommutative majorization order on tuples of self-adjoint matrices defined in terms of inequalities satisfied by certain convex noncommutative functions. We now outline these ideas in more detail.

The necessary machinery for developing a noncommutative majorization theory is provided by the noncommutative convexity and Choquet theory developed by Davidson and the first author in \cite{DK2019}. In this work, a great deal of classical convexity theory and classical Choquet theory is extended to the noncommutative setting. Importantly, there is an appropriate definition of noncommutative convex function, defined on a compact noncommutative convex set containing tuples of self-adjoint matrices.

Let $a = (a_1,\ldots,a_k), b = (b_1,\ldots,b_k) \in \M_n^k$ be $k$-tuples of self-adjoint matrices as above. There is a natural compact noncommutative convex set $K$ containing $a$ and $b$, namely the closed noncommutative $k$-ball of an appropriately large radius $r > 0$. This is the graded set $K = \sqcup_{m \leq \infty} K_m$, where $K_m \subseteq \M_m^k$ is defined by
\[
K_m = \left\{x = (x_1,\ldots,x_k) \in \M_m^k : \sum_{i=1}^k x_i x_i^* \leq r^2 1_m \right\}.
\]
It is important in noncommutative convexity theory that $m = \infty$ is permitted, in which case $\M_\infty$ denotes the space of bounded operators on a fixed Hilbert space of countably infinite dimension. The set $K$ is a compact noncommutative convex set, meaning that each $K_m$ is compact with respect to a natural topology on $\M_m^k$, and $K$ is closed under noncommutative convex combinations, in the sense that
\[
\sum_{i=1}^\infty \alpha_i^* x_i \alpha_i \in K_n
\]
for every $m \leq \infty$, every bounded sequence $\{x_i \in K_{m_i}\}$, and sequence $\{\alpha_i \in \M_{m_i,m}\}$ satisfying $\sum_i \alpha_i^* \alpha_i = 1_m$.

The definition of a noncommutative function in this setting closely resembles the definition of an noncommutative (free) analytic functions introduced by Taylor and Voiculescu. Let $\M = \sqcup_{m \leq \infty} \M_m$ denote the ``matrix universe,'' which plays the role of the complex numbers in the noncommutative setting. A noncommutative function on $K$ is a function $f : K \to \M$ satisfying certain invariance properties that reflect the noncommutative structure of $\M$. 

For example, let $K$ be the closed noncommutative $k$-ball of radius $r$ as above. Then the affine coordinate functions $h_1,\ldots,h_k : K \to \M$ defined by
\[
h_i(x) = x_i, \quad x = (x_1,\ldots,x_k) \in K
\]
for $1 \leq i \leq k$ are noncommutative functions on $K$. Let $\1 : K \to \M$ denote the constant noncommutative function on $K$ defined by $\1(x) = 1_n$ for $m \leq \infty$ and $x \in K_n$. The set of noncommutative polynomials in $\1,h_1,\ldots,h_k$ is dense in the set of all bounded noncommutative functions on $K$, in a certain precise sense.

There are natural definitions of continuity and lower semicontinuity for noncommutative functions, as well as a natural definition of convexity. Specifically, a self-adjoint noncommutative function is convex if its epigraph is convex, which is equivalent to satisfying a noncommutative analogue of Jensen's inequality. For example, the affine coordinate functions $h_1,\ldots,h_k$ and their squares are continuous and convex. However, higher powers of these functions are not convex.

The noncommutative setting additionally requires us to consider higher order noncommutative functions on $K$. For $p \leq \infty$, a noncommutative function of order $p$ on $K$ is a function $f : K \to \M_p(\M)$, which can be identified with a $p \times p$ matrix of noncommutative functions on $K$.

We will be interested in the noncommutative functions that arise as noncommutative suprema of higher order continuous affine nc functions. These functions, which we will refer to as noncommutative majorization functions, are noncommutative analogues of lower semicontinuous convex functions in classical convexity theory. In the noncommutative setting, they are also well-behaved, although they may be multivalued because of the anti-lattice structure of the self-adjoint operators. The noncommutative majorization order is defined in terms of these functions.

\begin{defn} \label{intro-defn:nc-majorization}
For $m,n \in \bN$, let $\psi : \M_n \to \M_m$ be a faithful unital completely positive map. For $k \in \bN$ and $k$-tuples $a = (a_1,\ldots,a_k), b = (b_1,\ldots,b_k) \in \M_n^k$, we will say that $a$ is $\psi$-majorized by $b$ and write $a \prec_\psi b$ if
\[
(\id_{\M_p} \otimes \psi)(F(a)) \leq (\id_{\M_p} \otimes \psi)(F(b))
\]
for every $p \leq \infty$ and every noncommutative majorization function $F : K \to \M_p(\M)$, where $K$ is any closed nc $k$-ball containing $a$ and $b$. If $\psi$ is the normalized trace $\tr_n$ on $\M_n$, then we will simply say that $a$ is {\em majorized} by $b$ and write $a \prec b$.
\end{defn}

Higher order multivalued noncommutative functions play an important role in noncommutative convexity theory and noncommutative Choquet theory. However, it is natural to wonder if they are truly necessary in the definition of the noncommutative majorization order. We will demonstrate in Section \ref{sec:necessity} that this question has an affirmative answer in a very strong sense.

The following result provides a complete solution to Problem \ref{intro-problem:motivation} in terms of the appropriate noncommutative majorization order.

\begin{thm} \label{intro-thm:main}
For $m,n \in \bN$, let $\psi : \M_n \to \M_m$ be a faithful unital completely positive map. For $k \in \bN$, let $a = (a_1,\ldots,a_k), b = (b_1,\ldots,b_k) \in \M_n^k$ be $k$-tuples of self-adjoint matrices, and let $\mathcal{B} \subseteq \M_n$ denote the unital C*-algebra generated by $b_1,\ldots,b_k$. The following are equivalent:
\begin{enumerate}
\item $a \prec_\psi b$;
\item There is a unital completely positive map $\Phi : \mathcal{B} \to \M_n$ such that $\psi \circ \Phi = \psi$ and $\Phi(b_i) = a_i$ for all $1 \leq i \leq k$.
\end{enumerate}
\end{thm}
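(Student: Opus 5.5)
The plan is to prove the two implications separately. The direction (2)$\Rightarrow$(1) uses the noncommutative Jensen inequality for nc majorization functions together with a Stinespring dilation of $\Phi$. The direction (1)$\Rightarrow$(2) uses a Hahn--Banach (Effros--Winkler type) separation argument, with the separating functional converted into an nc majorization function.

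\emph{(2)$\Rightarrow$(1).} Let $\Phi:\B\to\M_n$ be unital completely positive with $\psi\circ\Phi=\psi$ and $\Phi(b)=a$.
\begin{enumerate}
\item By Stinespring, $\Phi(y)=V^*\pi(y)V$ for a representation $\pi$ of $\B$ and an isometry $V$.
\item Since $\B\subseteq\M_n$ is finite dimensional and the identity representation is faithful, every representation of $\B$ is, up to unitary equivalence, a subrepresentation of an amplification $y\mapsto y\otimes 1_r$. Absorbing the unitary and the subspace inclusion into $V$, we get an isometry $\alpha\in\M_{nr,n}$ with $\Phi(y)=\alpha^*(y\otimes 1_r)\alpha$ for all $y\in\B$.
\item Hence $a=\alpha^*(b\otimes 1_r)\alpha$ is an nc convex combination of the point $b\otimes 1_r\in K$.
\item Let $F:K\to\M_p(\M)$ be an nc majorization function. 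As an nc supremum of continuous affine nc functions it satisfies the nc Jensen inequality, in the appropriate multivalued sense: for each value of $F$ at $b\otimes 1_r$ there is a value of $F(a)$ dominated by its compression.
\item By nc-ness, $F(b\otimes 1_r)=F(b)\otimes 1_r$.
\item Also by nc-ness, $F(b)$ commutes with $1_p\otimes u$ for every unitary $u$ in the commutant $\B'$. So $F(b)\in\M_p(\B'')=\M_p(\B)$.
\item Therefore $(1_p\otimes\alpha)^*(F(b)\otimes 1_r)(1_p\otimes\alpha)=(\id_{\M_p}\otimes\Phi)(F(b))$, and so $F(a)\le(\id_{\M_p}\otimes\Phi)(F(b))$.
\item Applying $\id_{\M_p}\otimes\psi$, which is positive since $\psi$ is completely positive, and using $\psi\circ\Phi=\psi$ gives the required inequality.
\end{enumerate}
The only delicate point here is stating Jensen and the commutant argument correctly for multivalued higher order functions. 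I would check this carefully against the Davidson--Kennedy definitions.

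\emph{(1)$\Rightarrow$(2).} Argue by contraposition. Let $\mathcal{C}$ be the set of unital completely positive maps $\Phi:\B\to\M_n$ with $\psi\circ\Phi=\psi$. It is nonempty, since it contains the inclusion, and it is convex and compact. The set $D=\{(\Phi(b_1),\dots,\Phi(b_k)):\Phi\in\mathcal{C}\}\subseteq\M_n^k$ is therefore compact and convex.
\begin{enumerate}
\item If $a\notin D$, Hahn--Banach gives self-adjoint $c_1,\dots,c_k\in\M_n$ with $\sum_i\tr(c_ia_i)>\sup_{\Phi\in\mathcal{C}}\sum_i\tr(c_i\Phi(b_i))$.
\item Next, dualize the constraint $\psi\circ\Phi=\psi$ with a Lagrange multiplier $\lambda\in\M_m^{\sa}$, i.e. add $\tr(\lambda\,\psi(x-\Phi(x)))$.
\item Unital completely positive maps $\B\to\M_n$ are exactly compressions $y\mapsto\alpha^*(y\otimes1_r)\alpha$. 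The supremum over these is then controlled by a largest-eigenvalue-type quantity, namely an nc supremum of an affine order-$p$ function built from the $c_i$ and $\psi^*(\lambda)$ paired against $(1,h_1,\dots,h_k)$. Here I would take $p=n$, or $p=nm$ to accommodate $\psi$.
\item Because $\psi$ is faithful, minimax and compactness arguments should justify the duality with no gap.
\item This produces an nc majorization function $F$ and a positive functional on $\M_p(\M_m)$ with respect to which $(\id_{\M_p}\otimes\psi)(F(a))\not\le(\id_{\M_p}\otimes\psi)(F(b))$, contradicting (1).
\end{enumerate}

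The main obstacle is this last translation: the Lagrangian dual of the semidefinite feasibility problem must be encoded exactly as an nc majorization function of some finite (or countable) order $p$. I would first try to do this by identifying $\Phi$ with its Choi matrix, writing feasibility as a semidefinite program, and reading off the dual certificate. I would then recognize the resulting $\lambda_{\max}$-type expression as $F=\sup_{\beta}\beta^*(\cdots)\beta$, which is a genuine nc supremum of continuous affine nc functions. If the strong-duality step fails, I would fall back on the nc bipolar and separation theorem of Effros--Winkler applied to the matrix-convex hull of the points $\{\Phi(b)\}$.
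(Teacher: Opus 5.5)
\textbf{Gap in (2)$\Rightarrow$(1): Step 6.} Steps 1--4 of your argument are fine. Step 5 should read $F(b)\otimes 1_r\subseteq F(b\otimes 1_r)$, but that inclusion is all you use. Step 6, however, is false, and the argument breaks there. For a multivalued $F$, unitary equivariance with $u\in\B'$ only says that the \emph{set} $F(b)$ is invariant under conjugation by $1_p\otimes u$. It does not place the elements of $F(b)$ in $\M_p(\B)$. Indeed $F(b)$ is upward directed, and even its minimal elements can lie outside $\M_p(\B)$, since they come from representing maps $\mu\neq\delta_b$ as in Theorem~\ref{thm:DK-7.5.1}. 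Without Step 6, Steps 4--5 only give $(\id_{\M_p}\otimes\tilde\Phi)(\beta)\in F(a)$ for $\beta\in F(b)$, where $\tilde\Phi(x)=\alpha^*(x\otimes 1_r)\alpha$ is some ucp extension of $\Phi$ to $\M_n$. Step 8 would then need $\psi\circ\tilde\Phi=\psi$ off $\B$, which you do not have.

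\textbf{This cannot be repaired for general faithful $\psi$.} The implication (2)$\Rightarrow$(1) fails as stated. Let $n=3$, $m=k=1$ and $\psi=\operatorname{Tr}(\rho\,\cdot\,)$, where $\rho_{11}=\rho_{22}=\frac{3}{10}$, $\rho_{33}=\frac{4}{10}$, $\rho_{13}=\rho_{31}=\frac{\sqrt2}{10}$ and the other entries of $\rho$ vanish. Let $b=e_{11}$ and $a=e_{22}$, with $e_{ij}$ the matrix units.
\begin{itemize}
\item The $*$-homomorphism $\Phi:\B=\operatorname{span}\{e_{11},1_3-e_{11}\}\to\M_3$ with $\Phi(e_{11})=e_{22}$ satisfies (2), because $\rho_{11}=\rho_{22}$.
\item Take $K$ of radius $2$ and $F=\overline{f}$ with $f=|\Re h_1-\frac12\1|\in\rC(K)$.
\item Since $f\geq\pm(\Re h_1-\frac12\1)$, every $\gamma\in F(a)$ satisfies $\gamma\geq\pm(a-\frac12 1_3)$. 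Because $a$ commutes with $\rho$, this forces $\psi(\gamma)\geq\frac12$.
\item Now let $\gamma\geq\pm(b-\frac12 1_3)$ with $\|\gamma\|\leq 1$. Put $E_\pm=\frac12(\gamma\pm(b-\frac12 1_3))$ and $E_0=1_3-\gamma$ at the points $\frac32$, $-\frac12$, $\frac12$. This is a POVM whose Naimark dilation is a ucp map on $\rC(K)$ with barycentre $b$ and value $\gamma$ at $f$, so $\gamma\in F(b)$.
\item For $\gamma=\frac12 1_3+s(e_{11}+e_{33})-\sqrt{s(1+s)}\,(e_{13}+e_{31})$ with $s=\frac{1}{100}$, one gets $\psi(\gamma)=\frac12+\frac{1}{10}\bigl(7s-2\sqrt{2s(1+s)}\bigr)<\frac12$. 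Hence $a\not\prec_\psi b$.
\end{itemize}
The paper's own proof of this direction hides the same step: it puts $\alpha=(\id_{\M_p}\otimes\Phi)(\beta)$ for an arbitrary $\beta\in F(b)$. So this is not a defect of your route alone.

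What survives is the case where $\Phi$ has a ucp extension $\tilde\Phi$ to $\M_n$ with $\psi\circ\tilde\Phi=\psi$. In particular it covers the tracial case, where Step 6 can be replaced by averaging. $F(b)$ is closed, convex, and invariant under conjugation by $1_p\otimes u$ for $u$ in the unitary group of $\B'$. Hence it contains $(\id_{\M_p}\otimes E)(\beta)$ for the trace-preserving conditional expectation $E$ onto $\B$, and $\tr_n\circ E=\tr_n$. This yields Corollary~\ref{cor:bistochastic}.

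\textbf{(1)$\Rightarrow$(2) is a plan, not a proof.} The step that carries all the content is only asserted: converting a dual certificate into an nc majorization function that violates (1). Moreover, an affine function of order $n$ or $nm$ in $(\1,h_1,\dots,h_k)$ does not visibly encode the constraint $\psi\circ\Phi=\psi$ on all of $\B$, which involves $\Phi$ on products of the $b_i$.

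The paper needs no separation or minimax:
\begin{itemize}
\item It lifts the Choi matrix $c=\sum_{e\in\mathcal U}e\otimes e$ of $\B$ to some $0\leq g\in\M_n(\rC(K))$.
\item It puts $h=\1-\sum_{e\in\mathcal U_d}g_e$ and $F=\overline{f}$ with $f=\bigoplus_e(g_e\oplus -g_e)\oplus h^*h\oplus -h^*h$.
\item It tests (1) only at $(\id_{\M_p}\otimes\psi)(f(b))$.
\item Theorem~\ref{thm:DK-7.5.1}, where the duality actually lives, then gives a ucp map $\phi$ on $\rC(K)$ with barycentre $a$ and $\psi\circ\phi(g_e)=\psi(g_e(b))$.
\item Faithfulness and the Schwarz inequality force $\phi(h)=0$, and $\Phi(e):=\phi(g_e)$ is the required map.
\end{itemize}
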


As an immediate Corollary, we obtain a natural generalization of the equivalence between (1) and (2) in Theorem \ref{intro-thm:classical-characterizations}. The following result completely characterizes interpolation of tuples of self-adjoint matrices by trace-preserving unital completely positive maps, i.e. bistochastic quantum channels.

\begin{cor} \label{intro-cor:nc-majorization}
For $k,n \in \bN$ and $k$-tuples of self-adjoint matrices $a = (a_1,\ldots,a_k),b=(b_1,\ldots,b_k) \in \M_n^k$, the following are equivalent:
\begin{enumerate}
\item $a \prec b$;
\item There is a trace-preserving unital completely positive map, i.e. a bistochastic quantum channel, $\Phi : \M_n \to \M_n$ such that $\Phi(b_i) = a_i$ for all $1 \leq i \leq k$.  
\end{enumerate}
\end{cor}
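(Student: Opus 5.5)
The plan is to deduce Corollary~\ref{intro-cor:nc-majorization} directly from Theorem~\ref{intro-thm:main} with $m = 1$ and $\psi = \tr_n$, using a trace-preserving conditional expectation to pass between maps defined on $\B$ and maps defined on all of $\M_n$. The normalized trace $\tr_n : \M_n \to \M_1 = \bC$ is a faithful unital completely positive map. By Definition~\ref{intro-defn:nc-majorization}, $a \prec b$ means exactly $a \prec_{\tr_n} b$. So Theorem~\ref{intro-thm:main} says that (1) holds if and only if there is a unital completely positive map $\Phi_0 : \B \to \M_n$ with $\tr_n \circ \Phi_0 = \tr_n|_{\B}$ and $\Phi_0(b_i) = a_i$ for all $i$. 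It remains to show that such a $\Phi_0$ exists if and only if a map $\Phi$ as in (2) exists.

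If $\Phi : \M_n \to \M_n$ is as in (2), then its restriction $\Phi|_{\B}$ is unital, completely positive and trace-preserving, and it still sends $b_i$ to $a_i$. This gives one direction.

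For the converse, let $E : \M_n \to \B$ be the trace-preserving conditional expectation onto the unital C*-subalgebra $\B$. This is the orthogonal projection of $\M_n$ onto $\B$ with respect to the Hilbert--Schmidt inner product $\langle x, y\rangle = \tr_n(y^* x)$. Equivalently, after writing $\B \cong \bigoplus_j \M_{d_j} \otimes 1_{m_j}$, it is compression to the block diagonal followed by a partial trace on the multiplicity factors, composed with the unitary change of basis. The required properties are standard:
\begin{itemize}
\item $E$ is unital and completely positive;
\item $E$ fixes $\B$, so in particular $E(b_i) = b_i$;
\item $\tr_n \circ E = \tr_n$, since $\tr_n(E(x)) = \langle E(x), 1\rangle = \langle x, E(1)\rangle = \langle x, 1\rangle = \tr_n(x)$, using that $E$ is self-adjoint for this inner product and $1 \in \B$.
\end{itemize}
Now set $\Phi = \Phi_0 \circ E : \M_n \to \M_n$. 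It is unital and completely positive, it satisfies $\Phi(b_i) = \Phi_0(b_i) = a_i$, and $\tr_n \circ \Phi = \tr_n \circ \Phi_0 \circ E = \tr_n \circ E = \tr_n$. So $\Phi$ is a bistochastic quantum channel, which establishes (2).

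The only step with real content is the existence of the trace-preserving conditional expectation $E$, in particular its complete positivity. This follows from the block description: $E$ is a composition of a unitary conjugation, a compression to the block diagonal, and partial traces $\id \otimes \tr_{m_j}$, each of which is completely positive. Once $E$ is in hand, everything else is immediate from Theorem~\ref{intro-thm:main}.
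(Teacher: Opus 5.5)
Your proposal is correct and follows the paper's proof: apply Theorem~\ref{intro-thm:main} with $\psi = \tr_n$, then precompose the resulting map on $\B$ with the trace-preserving conditional expectation $E : \M_n \to \B$. You additionally spell out the easy converse (restricting $\Phi$ to $\B$) and why $E$ is completely positive and trace-preserving, both of which the paper leaves implicit.
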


We also obtain a non-unital extension of Corollary~\ref{intro-cor:nc-majorization} that completely characterizes interpolation of tuples of self-adjoint matrices by trace-preserving completely positive maps, i.e. (not necessarily unital) quantum channels. In addition, the matrices are not required to have the same dimension. The key ideas underlying this result are generalizations of Definition~\ref{intro-defn:nc-majorization} and Theorem~\ref{intro-thm:main}.

\begin{thm} \label{intro-thm:non-unital}
For $k,n_1,n_2 \in \bN$ and $k$-tuples of self-adjoint matrices $a = (a_1,\ldots,a_k) \in \M_{n_1}^k$ and $b = (b_1,\ldots,b_k) \in \M_{n_2}^k$, the following are equivalent:
\begin{enumerate}
\item $(\id_{\M_p} \otimes \tr_{n_1})(F(a')) \leq (\id_{\M_p} \otimes \omega)(F(b'))$ for every $p \leq \infty$ and every nc majorization function $F : K \to \M_p(\M)$, where $a' \in \M_{n_1}^k$ and $b' \in \M_{n_2 + 1}$ are defined by
\[
a' = \frac{1}{n_1} a, \quad b' = (b_1 \oplus 0, \ldots, b_k \oplus 0),
\]
and $\omega : \M_{n_2 + 1} \to \bC$ is the state defined by
\[
\omega([x_{ij}]) = \frac{1}{n_1} \sum_{i=1}^{n_2} x_{ii} + \frac{n_1 - 1}{n_1} x_{n_2 + 1, n_2 + 1};
\]
\item There is a trace-preserving completely positive map, i.e. a quantum channel, $\Phi : \M_{n_2} \to \M_{n_1}$ such that $\Phi(b_i) = a_i$ for all $1 \leq i \leq k$. 
\end{enumerate}
\end{thm}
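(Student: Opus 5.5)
The plan is to reduce Theorem~\ref{intro-thm:non-unital} to the general unital statement, Theorem~\ref{intro-thm:main}, which we take to hold more generally for a faithful state $\psi$ on one matrix algebra and a possibly different state on another. The extension to different dimensions would be proved in the same way, and we will use it in that form. The device is the standard dilation of a trace-preserving, non-unital map into a unital map preserving a suitable state.

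Suppose first that $\Phi : \M_{n_2} \to \M_{n_1}$ is trace preserving and completely positive with $\Phi(b_i) = a_i$. Write $e = \Phi(1_{n_2}) \geq 0$. Since $\tr$ is preserved, $\mathrm{Tr}(e) = n_2$, but $e$ need not be dominated by $1$. We therefore rescale. Define $\Psi : \M_{n_2+1} \to \M_{n_1}$ by
\[
\Psi(x) = \tfrac{1}{n_1}\Phi(x_{11}) + x_{22}\,(1 - \tfrac{1}{n_1} e),
\]
where $x_{11}$ is the $n_2\times n_2$ corner and $x_{22}$ is the scalar corner.

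The first thing to check is that $1 - e/n_1 \geq 0$. Otherwise a variant is needed: either use the freedom in the scalar block to absorb the positive remainder, or normalize by $\|e\|$. This normalization appears to be exactly what the weights $1/n_1$ and $(n_1-1)/n_1$ in $\omega$ encode. Assuming positivity, the remaining verifications are direct:
\begin{itemize}
\item $\Psi$ is unital and completely positive, since it is a compression-sum of CP maps.
\item $\Psi(b') = a/n_1 = a'$.
\item $\tr_{n_1}\circ\Psi = \omega$, which follows from a computation using $\mathrm{Tr}\,\Phi = \mathrm{Tr}$.
\end{itemize}
Applying the easy direction of the unital theorem (Jensen's inequality for nc majorization functions, plus invariance of $\tr_{n_1}$ and $\omega$ under $\Psi$) then yields~(1).

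For the converse, assume~(1). This is precisely the statement $a' \prec_{(\tr_{n_1},\omega)} b'$ in the two-state version of Definition~\ref{intro-defn:nc-majorization}. The hard direction of Theorem~\ref{intro-thm:main}, proved via noncommutative Choquet theory and Hahn--Banach separation of the nc convex set of representing maps, then gives a unital completely positive map $\Psi$ on $C^*(b')$ with $\tr_{n_1}\circ\Psi = \omega$ and $\Psi(b_i\oplus 0) = a_i/n_1$. If $\omega$ is not faithful (when $n_1 = 1$), one restricts to its support.

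We then extend $\Psi$ to $\M_{n_2+1}$ preserving $\omega$, using an $\omega$-preserving conditional expectation onto $C^*(b')$. This requires $\omega$ to restrict appropriately; if necessary, we first enlarge $C^*(b')$ to contain the diagonal projections $1_{n_2}\oplus 0$ and $0\oplus 1$. The final step sets
\[
\Phi(x) = n_1\,\Psi(x \oplus 0).
\]
Then $\Phi$ is completely positive with $\Phi(b_i) = a_i$, and
\[
\mathrm{Tr}\,\Phi(x) = n_1^2\,\tr_{n_1}\Psi(x\oplus 0) = n_1^2\,\omega(x\oplus 0) = n_1\,\mathrm{Tr}(x),
\]
which is off by a factor of $n_1$. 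This signals that the normalizations need adjusting. I would fix the scalings, e.g.\ taking $\Phi = \Psi(\cdot\oplus 0)$ with $a'$ scaled accordingly, so that trace preservation comes out exactly.

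The main obstacle I expect is this bookkeeping of scalings. Specifically, it must hold simultaneously that the complementary block $1 - e/n_1$ is positive (so that $\Psi$ is unital and CP in the forward direction) and that $\omega$ corresponds exactly to trace preservation in the backward direction. The conceptual content lies entirely in Theorem~\ref{intro-thm:main}, including its version for states on different matrix algebras, together with the existence of state-preserving conditional expectations.
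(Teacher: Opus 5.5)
Your overall route is the paper's: rescale $\Phi$ to a contraction, dilate it to a unital map on $\M_{n_2+1}$ by adding the corner $x_{22}(1-\phi(1_{n_2}))$, apply Theorem~\ref{thm:key} with $\psi_1=\tr_{n_1}$ and $\psi_2=\omega$, and compress back along $x\mapsto x\oplus 0$. The gap is that, once Theorem~\ref{thm:key} is granted, the normalization bookkeeping \emph{is} the proof, and you leave it open. Worse, two of the steps you assert are false as you have set things up.

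First, reading trace preservation as $\mathrm{Tr}\circ\Phi=\mathrm{Tr}$ only gives $\|e\|\le\mathrm{Tr}(e)=n_2$. So $1-e/n_1$ can fail to be positive: take $n_1=1$, $n_2=2$, $\Phi=\mathrm{Tr}$, so $e=2$. Normalizing by $\|e\|$ is not an option, because the state in (1) cannot depend on $\Phi$. Second, your bullet $\tr_{n_1}\circ\Psi=\omega$ fails: the computation actually gives $\tr_{n_1}\Psi(x)=\frac{1}{n_1^2}\mathrm{Tr}(x_{11})+\bigl(1-\frac{n_2}{n_1^2}\bigr)x_{22}$, which is not the displayed $\omega$. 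Third, in the converse you end with $\mathrm{Tr}\circ\Phi=n_1\mathrm{Tr}$ and only gesture at a rescaling that would change the statement.

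The root cause is that the displayed formula for $\omega$ is not a state when $n_2\ge2$, since $\omega(1)=(n_1+n_2-1)/n_1$. Test (1) against the constant nc majorization function $F(x)=[c1_n,\infty)$ for $x\in K_n$. This gives $[c\,\omega(1),\infty)\subseteq[c,\infty)$ for all real $c$, which forces $\omega(1)=1$. So with the printed $\omega$, (1) fails for every $a,b$, while (2) can hold (e.g.\ $n_1=n_2=2$, $a=b$, $\Phi=\id$). The bookkeeping therefore cannot close as written.

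The paper's proof instead uses $\omega=\frac{1}{n_1}\tr_{n_2}\circ E_1+\frac{n_1-1}{n_1}E_2$ with the \emph{normalized} trace $\tr_{n_2}$, and reads trace preservation as $\tr_{n_1}\circ\Phi=\tr_{n_2}$. The missing ingredient is then the a priori bound $\|e\|\le\mathrm{Tr}(e)$ for $e\ge0$, which fixes a scale independent of $\Phi$. Here $\mathrm{Tr}(e)=n_1\tr_{n_1}(\Phi(1_{n_2}))=n_1\tr_{n_2}(1_{n_2})=n_1$. So $\Phi/n_1$ is a contraction with $\tr_{n_1}\circ(\Phi/n_1)=\frac{1}{n_1}\tr_{n_2}$, and your $\Psi$ is unital and completely positive with $\tr_{n_1}\circ\Psi=\omega$ exactly. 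Conversely, $\tr_{n_1}(n_1\Psi(x\oplus0))=n_1\omega(x\oplus0)=\tr_{n_2}(x)$. If you insist on genuine $\mathrm{Tr}$-preserving channels, the same argument works with scale $1/n_2$, i.e.\ $a'=a/n_2$ and the same corrected $\omega$.

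Two smaller points. Theorem~\ref{thm:key} requires only $\psi_1=\tr_{n_1}$ to be faithful, so the non-faithfulness of $\omega$ at $n_1=1$ is irrelevant. To extend $\Psi$ from $C^*(b')$ to $\M_{n_2+1}$ (a step the paper passes over), you cannot ``enlarge'' the domain of $\Psi$, but you do not need to. For $n_1\ge2$, the density $\frac{1}{n_1n_2}1_{n_2}\oplus\frac{n_1-1}{n_1}$ of $\omega$ commutes with $C^*(b')\subseteq\M_{n_2}\oplus\bC$, so an $\omega$-preserving conditional expectation onto $C^*(b')$ exists by Takesaki's theorem. For $n_1=1$ the map is forced to be $\omega$ itself.
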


We expect that it will be important to understand the computational difficulty of verifying Definition \ref{intro-defn:nc-majorization}. We will address this point in Section~\ref{sec:effective} by proving that it suffices to consider noncommutative functions obtained as the supremum of finitely many continuous affine functions of order at most $2n_2^2 + 2$.  

We will also address the question of whether there is a noncommutative generalization of the equivalence between (1) and (3) in Theorem~\ref{intro-thm:classical-characterizations}. For $n \in \bN$ and self-adjoint matrices $a,b \in \M_n$ with $a \prec b$, (3) in Theorem~\ref{intro-thm:classical-characterizations} implies that there is a mixed unitary quantum channel $\Phi : \M_n \to \M_n$, a particularly nice trace-preserving unital completely positive maps arising as convex combinations of unitary conjugations, such that $\Phi(b) = a$. Consequently, in Section \ref{sec:mixed}, we will consider whether the unital completely positive maps obtained from Corollary \ref{intro-cor:nc-majorization} can always be chosen to be mixed unitary. However, we will prove that for all $n \geq 4$ and $k \geq 2$, there are tuples of self-adjoint matrices $a = (a_1,\ldots,a_k), b = (b_1,\ldots,b_k) \in \M_n^k$ such that $a \prec b$, but there is no mixed unitary quantum channel $\Phi : \M_n \to \M_n$ satisfying $\Phi(b_i) = a_i$ for all $1 \leq i \leq k$. This is in spite of Corollary \ref{intro-cor:nc-majorization}, thereby demonstrating that majorization is a theory of quantum channels that need not be mixed unitary, i.e. the $k = 1$ case is very special.

In addition to this introduction, this paper has 8 more sections. Section~\ref{sec:nc-convexity}, Section~\ref{sec:nc-functions} and Section~\ref{sec:multivalued-nc-functions} provide the required background on noncommutative convexity and noncommutative function theory. In Section \ref{sec:nc-majorization}, we introduce the noncommutative majorization orders and prove Theorem~\ref{intro-thm:main} and Corollary~\ref{intro-cor:nc-majorization}. In Section \ref{sec:quantum-channel-interpolation}, we prove Theorem~\ref{intro-thm:non-unital}. In Section \ref{sec:effective}, we address the computational difficulty of verifying the noncommutative majorization orders, and in Section \ref{sec:necessity}, we address the necessity of considering higher order multivalued functions. Finally, in Section~\ref{sec:mixed}, we demonstrate that the maps arising from Corollary~\ref{intro-cor:nc-majorization} can not be mixed unitary in general.

In this paper, we have restricted our attention to the finite-dimensional setting in order to make the results accessible to a broad audience. We will consider the infinite-dimensional setting in a forthcoming paper.

\section{Noncommutative convexity} \label{sec:nc-convexity}

In this section, we will briefly review the theory of noncommutative convexity. We refer the interested reader to the monograph \cite{DK2019} and the survey \cite{DK2024} for more details. For the sake of simplicity, we will specialize the theory as much as possible here, taking advantage of the fact that we are primarily interested in finite tuples of matrices. For brevity, we will abbreviate noncommutative with nc when appropriate.

The notion of an nc convex set is the starting point for the theory of nc convexity. Before introducing the definition, we will require some notational conventions.

For $m,n \leq \infty$, let $\M_{m,n}$ denote the vector space of bounded linear maps between fixed complex Hilbert spaces $H_n$ and $H_m$. We equip $\M_{m,n}$ with the weak* topology, with agrees with the norm topology when $m,n < \infty$. By fixing orthonormal bases for $H_n$ and $H_m$, we can identify $\M_{m,n}$ with $m \times n$ matrices over $\bC$. We will write $\M_n$ for $\M_{n,n}$, $(\M_n)_{\sa}$ for the self-adjoint matrices $\M_n$, and $\M_n^+$ for the positive matrices in $\M_n$. Let $1_n$ denote the identity element of $\M_n$. For $n < \infty$, let $\tr_n : \M_n \to \bC$ denote the trace on $\M_n$, normalized so that $\tr_n(1_n) = 1$.

For $k \in \bN$ and $n \leq \infty$, we will write $\M_n(\bC^k)$ for the tensor product $\M_n \otimes \bC^k \cong \M_n^k$, equipped with the product topology. It will be convenient to view elements of $\M_n(\bC^k)$ as $k$-tuples of elements in $\M_n$.  The elements of $\M_{m,n}$ and $\M_{n,m}$ naturally act on $\M_n^k$ by left and right multiplication respectively. Specifically, for $x = (x_1,\ldots,x_k) \in \M_n^k$, $\alpha \in \M_{m,n}$ and $\beta \in \M_{n,m}$,
\[
\alpha x \beta = (\alpha x_1 \beta,\ldots,\alpha x_k \beta) \in \M_m^k.
\]

\begin{defn}
For $k \in \bN$, an {\em nc convex set} over $\bC^k$ is a graded set $K = \sqcup_{n \leq \infty} K_n$, with $K_n \subseteq \M_n^k$ for each $n$, that is closed under noncommutative convex combinations, meaning that $\sum_{i=1}^\infty \alpha_i^* x_i \alpha_i \in K_n$ for every bounded sequence $\{x_i \in K_{n_i}\}$ and every sequence $\{\alpha_i \in \M_{n_i,n}\}$ satisfying $\sum_{i=1}^\infty \alpha_i^* \alpha_i = 1_n$. We will say that $K$ is {\em closed} if each $K_n \subseteq \M_n^k$ is closed, and {\em compact} if each $K_n \subseteq \M_n^k$ is compact.
\end{defn}

\begin{rem}
By \cite{DK2019}*{Proposition 2.2.8}, the condition that $K$ is closed under nc convex combinations is equivalent to the condition that it is closed under direct sums and compressions, meaning that
\begin{enumerate}
\item $\sum_i \alpha_i x_i \alpha_i^* \in K_n$ for every bounded sequence $\{x_i \in K_{n_i}\}$ and every sequence of isometries $\{\alpha_i \in \M_{n, n_i}\}$ satisfying $\sum_{i=1}^\infty \alpha_i \alpha_i^* = 1_n$, and
\item $\beta^* x \beta \in K_m$ for every $x \in K_n$ and every isometry $\beta \in \M_{n,m}$.
\end{enumerate}
The condition that $K$ is compact is equivalent to the condition that each $K_n$ is closed and bounded in $\M_n^k$. It is both intentional and important that $n = \infty$ is allowed in the definitions of an nc convex set and nc convex combinations.
\end{rem}

\begin{rem}
Although we will primarily be interested in nc convex sets over $\bC^k$, we can more generally consider nc convex sets over an operator space $E$.
\end{rem}

\begin{exam}\label{exam:universe}
Let $\M$ denote the ``matrix universe'' $\M = \sqcup_{n \leq \infty} \M_n$. Then $\M$ is a closed but non-compact nc convex set that plays the role of the complex numbers in nc convexity and nc function theory. We will refer to $\M$ throughout this paper.
\end{exam}

For the purpose of this paper, we will primarily be interested in the following particularly nice examples of compact nc convex sets.

\begin{exam} \label{exam:nc-k-ball}
Let $k \in \bN$ and $r > 0$. The {\em closed nc $k$-ball of radius $r$} is the compact nc convex set $K = \sqcup_{n \leq \infty} K_n$ over $\bC^k$ defined for $n \leq \infty$ by
\[
K_n = \left\{x = (x_1,\ldots,x_n) \in \M_n^k : \sum_{i=1}^k x_i^* x_i \leq r^2 1_n \right\}. 
\]
Equivalently, $x = (x_1,\ldots,x_k) \in \M_n^k$ belongs to $K_n$ if and only if $\|x\|_{\operatorname{op}} \leq r$, where $x$ is viewed as an operator from $H_n^k$ to $H_n$ and $\|\cdot\|_{\operatorname{op}}$ denotes the corresponding operator norm.

This is a natural setting for noncommutative majorization theory since, for $n \in \bN$ and $k$-tuples of self-adjoint matrices $a = (a_1,\ldots,a_k), b = (b_1,\ldots,b_k) \in \M_n^k$, both $a$ and $b$ will belong to $K$ if $\max(\|a\|_{\operatorname{op}},\|b\|_{\operatorname{op}}) \leq r$.
\end{exam}

\begin{rem}
For $k \in \bN$, let $E = \sqcup_{n \leq \infty} E_n$ with $E_n\subseteq \M_n^k$. Since the intersection of nc convex sets over $\bC^k$ is an nc convex set over $\bC^k$, there is a smallest nc convex set, say $K$, containing $E$, which we will refer to as the {\em nc convex hull} of $E$. The closure of $K$ is easily seen to be the smallest closed nc convex set containing $E$, and so we will refer to it as the {\em closed nc convex hull} of $E$. 
\end{rem}

\section{Noncommutative functions} \label{sec:nc-functions}

\subsection{Noncommutative functions}

The theory of nc convexity becomes much richer with the addition of an appropriate notion of function. The following definition is \cite{DK2019}*{Definition 4.2.1}.

\begin{defn}
Let $K = \sqcup_{n \leq \infty} K_n$ be a compact nc convex set over $\bC^k$.  A function $f : K \to \M$ is said to be an {\em nc function} if it is graded, respects direct sums, and is equivariant with respect to unitaries, meaning that
\begin{enumerate}
\item $f(K_n) \subseteq \M_n$ for all $n \leq \infty$,
\item $f(\sum_{i=1}^\infty \alpha_i x_i \alpha_i^*) = \sum_{i=1}^\infty \alpha_i f(x_i) \alpha_i^*$ for every bounded sequence $\{x_i \in K_{n_i}\}$ and every sequence of isometries $\{\alpha_i \in \M_{n, n_i}\}$ satisfying $\sum_{i=1}^\infty \alpha_i \alpha_i^* = 1_n$, and
\item $f(\beta^* x \beta) = \beta^* f(x) \beta$ for every $x \in K_n$ and every unitary $\beta \in \M_n$.
\end{enumerate}
The function $f$ is said to be an {\em affine} nc function if, in addition, it is equivariant with respect to isometries, meaning that
\begin{enumerate}
\item[(4)] $f(\gamma^* x \gamma) = \gamma^* f(x) \gamma$ for every $x \in K_n$ and every isometry $\gamma \in \M_{n,m}$.
\end{enumerate}
\end{defn}

For $k \in \bN$, let $K$ be a compact nc convex set over $\bC^k$. There is a natural {\em uniform norm} $\|\cdot\|_\infty$ on the nc functions on $K$. For an nc function $f : K \to \M$, 
\[
\|f\|_\infty := \sup_{x \in K} \|f(x)\|.
\]
We will say that $f$ is {\em bounded} if $\|f\|_\infty < \infty$. In \cite{DK2019}*{Proposition 4.2.4}, it was shown that the space $\rB(K)$ of {\em bounded nc functions} on $K$ is a von Neumann algebra with respect to the uniform norm and pointwise multiplication, with the adjoint given by $f^*(x) = f(x)^*$ for $x \in K$.

An affine nc function $h : K \to \M$ is {\em continuous} if the restriction $h|_{K_n} : K_n \to \M_n$ is continuous for all $n \leq \infty$. The operator subsystem $\rA(K) \subseteq \rB(K)$ consisting of continuous affine nc functions plays a key role in the theory.

\begin{exam} \label{exam:nc-k-ball-functions}
For $k \in \bN$ and $r > 0$, let $K = \sqcup_{n \leq \infty} K_n$ be the closed nc $k$-ball of radius $r$ from Example~\ref{exam:nc-k-ball}. For $1 \leq i \leq k$, define the nc coordinate function $h_i : K \to \M$ by
\[
h_i(x) = x_i, \quad x = (x_1,\ldots,x_k) \in K.
\]
Each $h_i$ is an affine nc function and $\rA(K) = \operatorname{span}\{\1, h_1,\ldots,h_k\}$, where the scalar nc function $\1 : K \to \M$ is defined by by $\1(x) = 1_n$ for $x \in K_n$. Indeed, this follows from the fact that $K_1$ is the closed ball of radius $r$ in $\bC^k$, and the restriction map from $\rA(K)$ to the (ordinary) continuous affine functions on $K_1$ is a unital order isomorphism.
\end{exam}

The C*-subalgebra $\rC(K) \subseteq \rB(K)$ generated by $\rA(K)$ is the C*-algebra of {\em continuous nc functions} on $K$, and functions in $\rC(K)$ are called {\em continuous nc functions}. This name is appropriate by \cite{DK2019}*{Theorem 4.4.2}, which can be viewed as a kind of noncommutative Stone-Weierstrass-type theorem. It asserts that a bounded nc function in $\rB(K)$ belongs to $\rC(K)$ precisely when it is continuous with respect to the weakest topology on $K$ making each element of $\rA(K)$ point-ultrastrong* continuous.

The interplay between the structure of $\rA(K)$ and the representation theory of $\rC(K)$ is the starting point for nc Choquet theory. It follows from the categorical duality between operator systems and compact nc convex sets that we can identify $K$ with the nc state space of $\rA(K)$, i.e. the unital completely positive maps from $\rA(K)$ to $\M_n$ for $n \leq \infty$ (see \cite{DK2019}*{Theorem 3.2.5}). Specifically, if $\mu : \rA(K) \to \M_n$ is a unital completely positive map, then there is a point $x \in K_n$ such that $\mu(a) = a(x)$ for all $a \in \rA(K)$.

For $x \in K_n$, let $\delta_x : \rB(K) \to \M_n$ denote the evaluation map at $x$, so that $\delta_x(f) = f(x)$ for $f \in \rB(K)$. Then $\delta_x$ is a normal unital *-homomorphism, and every normal unital *-homomorphism from $\rC(K)$ to $\M_n$ for $n \leq \infty$ is of this form by \cite{DK2019}*{Proposition 4.3.2}. In fact, it follows from the universal properties of $\rC(K)$ and $\rB(K)$ that the restriction $\delta_x|_{\rC(X)}$ is the unique extension of $x$ to a unital *-homomorphism on $\rC(K)$, and $\delta_x$ is the unique extension of $\delta_x|_{\rC(X)}$ to a normal unital *-homomorphism on $\rB(K)$ (see \cite{DK2019}*{Theorem 4.5.1} and \cite{DK2019}*{Theorem 4.3.3}).

Although every nc state $x \in K$ has a unique extension to a unital *-homomorphism on $\rC(K)$, there may be many unital completely positive extensions that are not *-homomorphisms. Note that if $\mu : \rC(K) \to \M_n$ is a unital completely positive map, then the restriction $\mu|_{\rA(K)}$ is a point in $K_n$.

\begin{defn}
    Let $K$ be a compact nc convex set and let $\mu : \rC(K) \to \M_n$ be a unital completely positive map. The {\em barycentre} of $\mu$ is the restriction $\mu|_{\rA(K)} \in K_n$, and the map $\mu$ is said to {\em represent}  $x$.
    Equivalently, $\mu$ represents $x$ if $\mu(a) = a(x)$ for all $a \in \rA(K)$.  
\end{defn}

The following definition from \cite{DK2019}*{Definition 5.2.1} encodes the notion of Stinespring representation of a unital completely positive map. 

\begin{defn}
Let $K$ be a compact nc convex set and let $\mu : C(K) \to \M_m$ be a unital completely positive map. A pair $(x, \alpha) \in K_n \times \M_{n,m}$ with $\alpha$ an isometry is a \emph{representation} of $\mu$ if $\mu = \alpha^* \delta_x \alpha$. If, in addition, $\{f(x) \alpha H_m : f \in \rC(K)\}$ is dense in $H_n$, then we will say that $(x,\alpha)$ is {\em minimal}.
\end{defn}

Stinespring's theorem, combined with the representation theory of $\rC(K)$, implies the existence of a minimal representation for every unital completely positive map from $\rC(K)$ to $\M_n$ for $n \leq \infty$. Furthermore, this minimal representation is unique up to unitary equivalence.

\subsection{Higher order noncommutative functions}

The nc majorization order will require us to consider higher order nc functions, which can be viewed as matrices of nc functions. For $k \in \bN$, let $K$ be a compact nc convex set over $\bC^k$. For $p \leq \infty$, identify $\M_p \otimes \rB(K)$ with $\M_p(\rB(K))$. We can view $f \in \M_p(\rB(K))$ as a function $f : K \to \M_p(\M)$, and it easy to verify that $f$ is graded and respects direct sums. Furthermore, $f$ is unitarily equivariant, in the sense that for $n \leq \infty$, $x \in K_n$, and a unitary $\alpha \in \M_n$,
\[
f(\alpha^* x \alpha) = (1_p \otimes \alpha^*) f(x) (1_p \otimes \alpha). 
\]
We will refer to elements of $\M_p(\rA(K))$, $\M_p(\rC(K))$ and $\M_p(\rB(K))$ as {\em continuous affine}, {\em continuous}, and {\em bounded} nc functions respectively. We will say that $f \in \M_p(\rB(K))$ is {\em self-adjoint} if $f = f^*$, which is equivalent to $f(x)$ being self-adjoint for every $x \in K$.

\begin{exam}
For $k \in \bN$ and $r > 0$, let $K = \sqcup_{n \leq \infty} K_n$ be the closed nc $k$-ball of radius $r$ from Example~\ref{exam:nc-k-ball}. Let $h_1,\ldots,h_k \in \rA(K)$ be the coordinate affine nc functions and $\1$ the scalar affine nc function from Example~\ref{exam:nc-k-ball-functions}. We saw that $\rA(K) = \operatorname{span}\{\1,h_1,\ldots,h_k\}$. It follows from this that for $p \leq \infty$, every continuous affine nc function $a \in \M_p(\rA(K))$ can be written as 
\[
a = \alpha_0 \otimes \1 + \alpha_1 \otimes h_1 + \cdots + \alpha_k \otimes h_k,
\]
for some $\alpha_0,\ldots,\alpha_k \in \M_p$. 
\end{exam}

\subsection{Convex noncommutative functions}

The following definition of a convex nc function is analogous to the classical definition of a convex function. Note that for $k \in \bN$, if $K$ is a compact nc convex set over $\bC^k$, then for $p \leq \infty$, $\sqcup_{n \leq \infty} K_n \times \M_p(\M_n)$ is a nc convex set over the operator space $\bC^k \times \M_p$. The following definition is \cite{DK2019}*{Definition 7.2.1}.

\begin{defn}[\cite{DK2019}*{Definition 7.2.1}]
Let $K$ be a compact nc convex set. For $p \leq \infty$ and a bounded nc function $f \in \M_p(\rB(K))$, the \emph{epigraph of $f$} is the subset
\[
\operatorname{Epi}(f) \subseteq \sqcup_{n \leq \infty} K_n \times \M_p(\M_n)
\]
defined by
\[
\operatorname{Epi}(f) = \{(x,\alpha) \in K \times \M_p(\M) : x \in K,\ \alpha \geq f(x)\}.
\]
The nc function $f$ is {\em convex} if $\operatorname{Epi}(f)$ is a nc convex set, and {\em lower semicontinuous} if $\operatorname{Epi}(f)$ is closed.
\end{defn}

\begin{rem}
By \cite{DK2019}*{Remark 7.2.2}, for $p \leq \infty$, a self-adjoint bounded nc function $f \in \M_p(B(K))$ is convex if and only if $f$ satisfies the noncommutative Jensen-type inequality
\[
f(\alpha^*x\alpha) \leq (1_p \otimes \alpha^*) f(x) (1_p \otimes \alpha)
\]
for every $m,n \leq \infty$, every $x \in K_n$ and every isometry $\alpha \in \M_{n,m}$. Furthermore, by \cite{DK2019}*{Proposition 7.2.3}, $f$ is convex if and only if
\[
f(t x + (1-t) y) \leq t f(x) + (1-t)f(y)
\]
for every $n \leq \infty$, every $x, y \in K_n$, and every $t \in [0,1]$.
\end{rem}

\section{Multivalued noncommutative functions} \label{sec:multivalued-nc-functions}

In nc convexity theory and nc Choquet theory, it is frequently necessary to work with certain well-behaved multivalued nc functions, as we will now explain.

The notion of a convex envelope plays a critical role in classical Choquet theory. For a bounded real-valued function on a compact convex set, the convex envelope is the best approximation from below by a lower semicontinuous convex function. By definition, the convex envelope is the supremum of the continuous affine functions dominated by the function. Equivalently, the epigraph of the convex envelope is the closed convex hull of the epigraph of the function.

There is a notion of convex envelope that plays an analogous role in nc Choquet theory. For a self-adjoint bounded nc function on a compact nc convex set, the convex envelope is similarly the best approximation from below by a lower semicontinuous convex nc function in an appropriate sense. However, it is not possible to define the convex envelope as a supremum of continuous affine nc functions, since such a supremum will generally not exist. This can be seen from Kadison's anti-lattice theorem in \cite{K1951}, which asserts that an incomparable pair of bounded operators do not have a supremum.

In order to circumvent this difficulty, the convex envelope of an nc function is defined in terms of its epigraph. Specifically, the epigraph of the convex envelope is defined to be the closed nc convex hull of the epigraph of the function. Equivalently, the epigraph of the convex envelope is the intersection of the epigraphs of the continuous affine nc functions dominated by the function.

While the convex envelope of an nc function may not be a single-valued nc function, it will always be a well-behaved multivalued nc function, as we will now make precise. The following definition is from \cite{DK2019}*{Definition 7.3.1}.

\begin{defn} \label{defn:multivalued-nc-function}
Let $K$ be an nc convex set. For $p \leq \infty$, a self-adjoint multivalued function $F : K \to \M_p(\M)$ is a \emph{multivalued nc function} if it is non-degenerate, graded, unitarily equivariant and upwards directed, meaning that
\begin{enumerate}
\item $F(x) \neq \emptyset$ for all $x \in K$,
\item $F(K_n) \subseteq \M_p(\M_n)$ for every $n \leq \infty$,
\item $F(\alpha^* x \alpha) = (1_p \otimes \alpha)^* F(x) (1_p \otimes \alpha)$ for every $n \leq \infty$, every $x \in K_n$, and every unitary $\alpha \in \M_n$, and
\item $F(x) = F(x) + \M_p(\M_n)^+$ for every $n \leq \infty$ and every $x \in K_n$; that is, $F(x)$ is directed upwards.
\end{enumerate}
If $G : K \to \M_p(\M)$ is another multivalued nc function, then we write $F \leq G$ if $F(x) \supseteq G(x)$ for all $x \in K$.

A multivalued nc function $F$ is \emph{bounded} if there exists $\lambda > 0$ such that for every $x \in K$ and every $\beta \in F(x)$ there exists an $\alpha \in F(x)$ with $\alpha \leq \beta$ and $\|\alpha\| \leq \lambda$.  If $F$ is bounded, then $\|F\|$ denotes the infimum over all such $\lambda$. 
\end{defn}

\begin{rem}
If $F : K \to \M_p(\M)$ is a multivalued nc function, then since $F$ is upwards directed, it is completely determined by the set of minimal elements in $F(x)$ for $x \in K$.
\end{rem}

There are natural definitions of convexity and lower semicontinuity for multivalued nc functions that extend the definitions of convexity and lower semicontinuity for nc functions. For these, we require the definition of the graph of a multivalued nc function.

\begin{defn}[\cite{DK2019}*{Definition 7.3.2}]
Let $K$ be a compact nc convex set. For $p \leq \infty$, let $F : K \to \M_p(\M)$ be a multivalued nc function.  The \emph{graph} of $F$ is the subset
\[
\mathrm{Graph}(F) \subseteq \bigsqcup_{n \leq \infty} K_n \times \M_p(\M_n)
\]
defined by
\[
\mathrm{Graph}_n(F) = \{(x, T) \in K_n \times \M_p(\M_n) \, \mid \, x \in K_n \text{ and } T \in F(x)\}.
\]
A multivalued nc function $F$ is \emph{convex} if $\mathrm{Graph}(F)$ is an nc convex set, and \emph{lower semicontinuous} if $\mathrm{Graph}(F)$ is closed.
\end{defn}

\begin{rem}\label{rem:nc-function-to-multivalued}
Let $K$ be a compact nc convex set. For $p \leq \infty$, a (single-valued) self-adjoint bounded nc function $f \in \M_p(\rB(K))$ can be identified with a bounded multivalued nc function $F : K \to \M_p(\M)$ defined by $F(x) = [f(x),\infty)$ for $n \leq \infty$ and $x \in K_n$, where
\[
[f(x),\infty) = \{\alpha \in \M_p(\M_n)_{\mathrm{sa}} : \alpha \geq f(x) \}.
\]
Equivalently, $F$ is the unique bounded multivalued nc function satisfying $\operatorname{Graph}(F) = \operatorname{Epi}(f)$. It follows immediately from this that $F$ is convex if and only if $f$ is convex.

Note that for $x \in K$, $f(x)$ is the unique minimal element in $F(x)$. Conversely, if  $F : K \to \M_m(\M)$ is a bounded multivalued nc function with the property that $F(x)$ has a unique minimal element for each $x \in K$, then $F$ arises from a self-adjoint bounded nc function in this way.
\end{rem}

Let $K$ be a compact nc convex set. For $p \leq \infty$, let $F : K \to \M_p(\M)$ be a multivalued nc function and let $g : K \to \M_p(\M)$ be an nc function. We will frequently identify $g$ with the corresponding multivalued function $G$ as in Remark \ref{rem:nc-function-to-multivalued} and write $F \leq g$ if $F \leq G$.

\begin{exam}\label{exam:multiple-affine-to-a-single}
Let $K$ be a compact nc convex set. For $p \leq \infty$, let $\{f_i \in \M_p(\rB(K)) \}_{i \in I}$ be a family of bounded nc functions. The {\em supremum} of the $f_i$ is the multi-valued nc function $F : K \to \M_p(\M)$ determined by
\[
\operatorname{Graph}(F) = \bigcap_{i \in I} \operatorname{Epi}(f_i).
\]
In other words, for $n \leq \infty$ and $x \in K_n$,
\[
F(x) = \bigcap_{i \in I} \{\alpha \in \M_p(\M_n)_{\sa} : \alpha \geq f_i(x) \}.
\]

Now suppose that $I$ is finite, say $I = \{1,\ldots,k\}$. Define a bounded nc function $f \in \M_k(\M_p(\rB(K)))$ by $f = \oplus_{i=1}^k f_i$. Then for $n \leq \infty$ and $x \in K_n$, the supremum $F$ defined above satisfies
\[
F(x) = \{\alpha \in \M_p(\M_n)_{\sa} : 1_k \otimes \alpha \geq f(x)\}.
\]

We caution that the supremum $F$ may not be bounded even though the family $\{f_i \in \M_p(\rB(K))\}_{i \in I}$ was assumed to be bounded.
\end{exam}

Recall that if $\mu : C(K) \to \M_m$ is a unital completely positive map with minimal representation $(x,\nu) \in K_n \times \M_{n,m}$, then $\mu(f) = \nu^* f(x) \nu$ for $f \in C(K)$. The map $\mu$ naturally extends to multivalued nc functions as in the following definition.

\begin{defn}[\cite{DK2019}*{Definition 7.3.4}]
    Let $K$ be a compact nc convex set and let $\mu : C(K) \to \M_m$ be a unital completely positive map with minimal representation $(x, \alpha) \in K_n \times \M_{n,m}$. For $p \leq \infty$ and a multivalued nc function $F : K \to \M_p(\M)$, we evaluate $\mu$ at $F$ by
    \[
    \mu(F) = (1_p \otimes \alpha)^* F(x)(1_p \otimes \alpha),
    \]
    where
    \[
    (1_p \otimes \alpha^*) F(x)(1_p \otimes \alpha) = \{(1_p \otimes \alpha^*) \beta (1_p \otimes \alpha) : \beta \in F(x) \}.
    \]
\end{defn}

\begin{rem}
Note that the above definition of $\mu(F)$ does not depend on the choice of representation of $\mu$ because of the unitary equivariance of $F$.
\end{rem}

We are now ready to define the convex envelope of a bounded multivalued nc function.

\begin{defn}[\cite{DK2019}*{Definition 7.4.1}]
Let $K$ be a compact nc convex set. For $p \leq \infty$, let $F : K \to \M_p(\M)$ be a bounded multivalued nc function. The {\em convex envelope} $\overline{F} : K \to \M_p(\M)$ of $F$ is the multivalued nc function with graph equal to the closed convex hull of the graph of $F$.
\end{defn}

\begin{rem}
It follows from \cite{DK2019}*{Proposition 7.4.2} that $\overline{F}$ is a bounded lower semicontinuous convex multivalued nc function with $\overline{F} \leq F$. Furthermore, $F$ is lower semicontinuous and convex if and only if $F = \overline{F}$.
\end{rem}

The following theorem from \cite{DK2019}*{Theorem 7.4.3} is a noncommutative analogue of the fact that the convex envelope of a bounded real-valued function on a compact convex set is the supremum of the continuous affine functions dominated by the function.

\begin{thm} \label{thm:DK-7.4.3}
Let $K$ be a compact nc convex set. For $p \leq \infty$, let $F: K \to \M_p(\M)$ be a bounded multivalued nc function. Then for $n \leq \infty$ and $x \in K_n$,
\[
\overline{F}(x) = \bigcap_q \bigcap_{a\leq 1_q \otimes F} \left\{ \alpha \in \M_p(\M_n)_{sa} : 1_q \otimes \alpha \geq a(x)\right\}
\]
where the intersection is taken over all $q \leq \infty$ and all self-adjoint continuous affine nc functions $a \in \M_q(\M_p(A(K)))$ satisfying $a \leq 1_q \otimes F$.
\end{thm}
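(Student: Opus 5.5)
Theorem~\ref{thm:DK-7.4.3} is a noncommutative Hahn--Banach separation statement, so I would prove it by establishing two inclusions and use the nc separation theorem (the Effros--Winkler bipolar theorem for closed nc convex sets) for the harder one. Write $G(x)$ for the right-hand side. Set $L = \operatorname{Graph}(\overline{F})$, the closed nc convex hull of $\operatorname{Graph}(F)$, and regard it as a closed nc convex set over the operator space $\bC^k \times \M_p$.

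\emph{Inclusion $\overline{F}(x) \subseteq G(x)$.} Fix $q$ and a self-adjoint $a \in \M_q(\M_p(\rA(K)))$ with $a \le 1_q \otimes F$. Let $S_a$ be the set of pairs $(y,\beta)$ with $1_q \otimes \beta \ge a(y)$. Because $a$ is affine and continuous, $S_a$ is a closed nc convex set. To check closure under nc convex combinations, take $\sum \gamma_i^* y_i \gamma_i$. Then $a(\sum \gamma_i^* y_i \gamma_i) = \sum (1\otimes\gamma_i)^* a(y_i)(1\otimes\gamma_i)$, and positivity is preserved under such compressions and sums. The hypothesis $a \le 1_q \otimes F$ says exactly that $\operatorname{Graph}(F) \subseteq S_a$. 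Hence $L \subseteq S_a$, and intersecting over all such $a$ gives the inclusion. Note also that $G$ itself is visibly upward directed and unitarily equivariant.

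\emph{Inclusion $G(x) \subseteq \overline{F}(x)$.} Suppose $x \in K_n$ and $\alpha \notin \overline{F}(x)$, so that $(x,\alpha) \notin L$. By the nc separation theorem applied to the closed nc convex set $L$, there is a level $q$ and a continuous self-adjoint affine separating functional. Concretely, there is a self-adjoint continuous linear map $\Phi$ from $\bC^k \times \M_p$ to $\M_q$, plus a self-adjoint constant $\gamma$, such that
\[
\Phi(y,\beta) \le \gamma \otimes 1 \quad \text{for all } (y,\beta) \in L,
\]
while $\Phi(x,\alpha) \not\le \gamma \otimes 1$. I would take the separating map in the ampliated form $\Phi(y,\beta) = \phi(y) - \psi(\beta)$, with $\phi$ affine in $y$ and $\psi : \M_p \to \M_q$ linear and self-adjoint.

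The upward directedness of $F$ controls $\psi$. Since $L$ is invariant under adding $1 \otimes \M_p(\M_m)^+$ in the second coordinate, $\psi$ must be positive; indeed it is completely positive after ampliation. The remaining task is to convert the separating inequality into one of the form $1_q \otimes \beta \ge a(y)$ with $a \in \M_q(\M_p(\rA(K)))$.

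\emph{Main obstacle.} This conversion is the step I expect to be hardest, because it needs $\psi$ to be essentially the identity ampliation $\beta \mapsto 1_q\otimes\beta$ rather than an arbitrary completely positive map.

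My first attempt would use Stinespring, writing $\psi(\beta) = V^*(1\otimes\beta)V$. I would then perturb $\psi$ by adding $\epsilon(1_q \otimes \tr$-type$)$ terms to make it faithful. This is where boundedness of $F$ is used: elements of norm at most $\|F\|$ exist in each $F(y)$, which keeps the separation strict after a small perturbation. With $\psi$ faithful, I would compress by $V$ and rewrite the inequality as $a(y) \le 1_{q'} \otimes \beta$, where $a$ is built from $\phi$ and $\gamma$.

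The non-vertical case should use the bound $\|F\|$ to rule out separation by purely vertical functionals. Indeed, the points $(y,\lambda 1)$ with $\lambda \ge \|F\|$ lie in $L$ for every $y$, and this prevents $\psi = 0$.

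Once this is done, the resulting $a$ satisfies $a \le 1_{q'} \otimes F$ but $1_{q'}\otimes\alpha \not\ge a(x)$. Therefore $\alpha \notin G(x)$, which proves the second inclusion.
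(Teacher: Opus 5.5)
The easy inclusion and the set-up of the hard one are fine. $S_a$ is a closed nc convex set containing $\operatorname{Graph}(F)$. Effros--Winkler separation of $(x,\alpha)$ from $L=\operatorname{Graph}(\overline{F})$ gives $\psi(\beta)\geq b(y)$ on $L$, with $b=\phi-\gamma\otimes 1$ a self-adjoint continuous affine nc function, and $\psi(\alpha)\not\geq b(x)$. Upward directedness then makes $\psi$ completely positive. (The paper does not reprove this statement; it quotes it from Davidson--Kennedy.)

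The genuine gap is the step you yourself flag, and the mechanism you propose there does not work. Write $\psi(\beta)=V^*(1_r\otimes\beta)V$ (Stinespring). Compressing by $V$ goes in the wrong direction: it turns $1_r\otimes\beta\geq a(y)$ into $\psi(\beta)\geq V^*a(y)V$. But you already \emph{have} the compressed inequality and need an uncompressed one. The inequality $V^*(1_r\otimes\beta)V\geq b(y)$ says nothing about $1_r\otimes\beta$ off the range of $V$, or about the cross terms. For example, with $p=2$ and $\psi(\beta)=\beta_{11}$, the inequality $\beta_{11}\geq b(y)$ does not imply $\beta\geq b(y)e_{11}$. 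Also, faithfulness of $\psi$ is not the relevant property. What you need is invertibility of $\psi(1)=V^*V$, so that $V$ can be normalized to an isometry. Your perturbation $\psi+\epsilon\,\omega(\cdot)1_q$ does give this, but it does not by itself produce an $a$ with $a\leq 1_{r}\otimes F$.

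The missing idea is to dilate, putting a large negative constant on the complement of the range. This, not the strictness step, is where boundedness is really used.
\begin{itemize}
\item Fix $\lambda>\|F\|$. After the perturbation and after lowering $b$ by a small constant, you get $\eta>0$ with $\psi(\beta)-b(y)\geq\eta 1$ for all $y$ and all $\beta\in F(y)$ with $\|\beta\|\leq\lambda$. You still have $\psi(\alpha)\not\geq b(x)$, because that failure is witnessed by a vector and survives small perturbations.
\item Set $D=\psi(1)$, $W=VD^{-1/2}$ (an isometry), $b'=D^{-1/2}bD^{-1/2}$ and $\delta=\eta/\|D\|$. Then $W^*(1_r\otimes\beta)W-b'(y)\geq\delta 1$ for such $\beta$.
\item Define $a=Wb'W^*-c(1-WW^*)\in\M_r(\M_p(\rA(K)))$.
\item Relative to $P=WW^*$, the operator $1_r\otimes\beta-a(y)$ has $P$-corner at least $\delta P$, $(1-P)$-corner at least $(c-\lambda)(1-P)$, and off-diagonal part of norm at most $\lambda$. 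So it is positive once $c\geq\lambda+\lambda^2/\delta$.
\item Since every element of $F(y)$ dominates one of norm at most $\lambda$, upward directedness extends $1_r\otimes\beta\geq a(y)$ to all $\beta\in F(y)$. That is, $a\leq 1_r\otimes F$.
\item Finally, $W^*(1_r\otimes\alpha-a(x))W=W^*(1_r\otimes\alpha)W-b'(x)\not\geq 0$. Hence $1_r\otimes\alpha\not\geq a(x)$, and so $\alpha\notin G(x)$.
\end{itemize}
Your separate discussion of $\psi=0$ is unnecessary. That case is excluded simply because $\operatorname{Graph}(F)$ projects onto all of $K$, and after the perturbation it is moot anyway.
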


The following result connects convex envelopes with unital completely positive maps on $\rC(K)$.

\begin{thm}[\cite{DK2019}*{Theorem 7.5.1}]\label{thm:DK-7.5.1}
Let $K$ be a compact nc convex set. For $p \leq \infty$, let $f : K \to \M_p(\M)$ be a self-adjoint lower semicontinuous bounded nc function with convex envelope $\overline{f}$.  Then for $n \leq \infty$ and $x \in K_n$, 
\[
\overline{f}(x) = \bigcup_{\mu} [\mu(f), \infty)
\]
where the union is taken over all unital completely positive maps $\mu : C(K) \to \M_n$ with barycentre $x$.
\end{thm}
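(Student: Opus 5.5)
The plan is to identify the right-hand side as the graph of a multivalued nc function $G$, defined for $x \in K_n$ by $G(x) = \bigcup_{\mu} [\mu(f),\infty)$ with $\mu$ ranging over unital completely positive maps $\mu : \rC(K) \to \M_n$ with barycentre $x$. I will prove $\operatorname{Graph}(G) = \operatorname{Graph}(\overline{f})$ by two inclusions. Since $\operatorname{Graph}(\overline{f})$ is the closed nc convex hull of $\operatorname{Epi}(f)$, it suffices to show two things:
\begin{enumerate}
\item every pair $(x,T)$ with $T \geq \mu(f)$ lies in that hull;
\item $\operatorname{Graph}(G)$ is a closed nc convex set containing $\operatorname{Epi}(f)$.
\end{enumerate}

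For (1), let $\mu$ have barycentre $x$ and minimal representation $(y,\alpha) \in K_m \times \M_{m,n}$. Then $(y,f(y)) \in \operatorname{Epi}(f)$. Compressing by the isometry $\alpha$ (respectively $1_p \otimes \alpha$) keeps us in the nc convex hull and gives $(\alpha^* y \alpha, \mu(f))$. Since $\mu$ has barycentre $x$, we have $\alpha^* y \alpha = \mu|_{\rA(K)} = x$. Upward directedness of $\overline{f}$ then yields $[\mu(f),\infty) \subseteq \overline{f}(x)$.

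For (2), the containment $\operatorname{Epi}(f) \subseteq \operatorname{Graph}(G)$ follows by taking $\mu = \delta_x$, since then $\mu(f) = f(x)$. For nc convexity I will use the direct sums and compressions criterion from the Remark after the definition of nc convex sets.
\begin{itemize}
\item Direct sums: given $\mu_i$ with barycentres $x_i$ and isometries $\beta_i$ with $\sum_i \beta_i\beta_i^* = 1$, the map $\mu = \sum_i \beta_i \mu_i(\cdot) \beta_i^*$ is ucp with barycentre $\sum_i \beta_i x_i \beta_i^*$. I need $\mu(f) = \sum_i (1_p \otimes \beta_i)\mu_i(f)(1_p \otimes \beta_i)^*$.
\item Compressions: $\gamma^* \mu(\cdot) \gamma$ has barycentre $\gamma^* x \gamma$. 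I need $(\gamma^*\mu\gamma)(f) = (1_p\otimes\gamma)^*\mu(f)(1_p\otimes\gamma)$.
\end{itemize}
Both identities reduce to one lemma: $\mu(f) = (1_p \otimes \alpha)^* f(y) (1_p \otimes \alpha)$ for \emph{every} representation $(y,\alpha)$ of $\mu$, not only minimal ones. To prove it, note that the closed subspace $\overline{\rC(K)(y)\alpha H_n}$ reduces $\delta_y(\rC(K))$. Because $\delta_y$ is a normal $*$-homomorphism on $\rB(K)$, and $\rB(K)$ is the normal (bidual-type) envelope of $\rC(K)$ by the universal property quoted in the paper, the subspace also reduces $\delta_y(\rB(K))$. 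Therefore $y$ splits as $y_{\min} \oplus y'$ with $\alpha$ landing in the first summand. Since $f$ respects direct sums, $f(y) = f(y_{\min}) \oplus f(y')$, which gives the lemma.

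The main obstacle is closedness of $\operatorname{Graph}(G)$, and here both lower semicontinuity of $f$ and compactness are used. Take a net $(x_i, T_i) \to (x,T)$ with $T_i \geq \mu_i(f)$. By compactness of the ucp maps $\rC(K) \to \M_n$ in the point-weak* topology, pass to a subnet with $\mu_i \to \mu$; then $\mu$ has barycentre $x$. I will use that a lower semicontinuous self-adjoint bounded nc function is the pointwise strong supremum of an increasing net of self-adjoint $g \in \M_p(\rC(K))$ with $g \leq f$; this is the nc analogue of the classical fact, available in \cite{DK2019}. For each such $g$ and each $i$, we have $\mu_i(g) \leq \mu_i(f) \leq T_i$. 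Passing to the limit in $i$ gives $\mu(g) \leq T$, using weak* continuity in $i$ and the fact that weak* limits preserve order. Normality of $\delta_y$ in a minimal representation of $\mu$ then gives $\mu(f) = \sup_g \mu(g) \leq T$. If that approximation result is not directly available, the fallback is to argue with $\operatorname{Epi}(f)$ being closed and the compactness of $K$, but that route looks harder.
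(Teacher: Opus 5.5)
\textbf{Verdict: there is a gap.} The closedness step for $\operatorname{Graph}(G)$ rests on an approximation lemma you neither prove nor locate, and that lemma is essentially as hard as what you want from it.

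\textbf{What works.} Your step (1), the inclusion $\operatorname{Epi}(f) \subseteq \operatorname{Graph}(G)$, and the nc convexity of $\operatorname{Graph}(G)$ are all correct. This includes your lemma that $\mu(f)$ can be computed from any representation. You do not need $\rB(K)$ for that lemma: once the projection onto $\overline{\rC(K)(y)\alpha H_n}$ commutes with $y$, the direct-sum axiom already splits $f(y)$. Note that the paper gives no proof of this statement and simply imports it from \cite{DK2019}.

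\textbf{The gap.} Closedness of $\operatorname{Graph}(G)$ rests entirely on the claim that a lower semicontinuous self-adjoint $f$ is the strong limit of an increasing net of self-adjoint $g \in \M_p(\rC(K))$ with $g \le f$.
\begin{enumerate}
\item This is not a routine transfer of the classical fact. Classically one uses that the continuous minorants of $f$ are upward directed. That fails in the nc setting by Kadison's anti-lattice theorem \cite{K1951}, which is the very reason multivalued envelopes appear in this paper.
\item Here lower semicontinuity only means that $\operatorname{Epi}(f)$ is closed. Nothing quoted in the paper turns that into monotone approximation from $\M_p(\rC(K))$.
\item Such a monotone approximation is essentially as strong as the lower semicontinuity of $\mu \mapsto \mu(f)$ on ucp maps that you want to extract from it. So the step is close to circular.
\end{enumerate}
Your proposed fallback is not carried out.

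\textbf{Two ways to close it.}
\begin{enumerate}
\item Suppose $f \in \M_p(\rC(K))$, which is the case actually used in the proof of Theorem~\ref{thm:key}. Then $\mu(f) = (\id_{\M_p} \otimes \mu)(f)$ is point-weak* continuous in $\mu$. So $T_i \geq \mu_i(f)$ passes directly to $T \geq \mu(f)$, and no approximation is needed.
\item Suppose instead $f$ has closed epigraph. Closedness at level $\infty$, where the topology is weak*, already forces convexity.
\begin{enumerate}
\item Take $x \in K_n$ and an isometry $\gamma$ with range $E$. Replace $x$ by $X = x \oplus x^{(\infty)}$, so that $E^\perp$ is infinite-dimensional.
\item Conjugate by $v_i = 1_E \oplus s_i$, where $s_i$ are unitaries on $E^\perp$ with $s_i \to 0$ weak*. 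The off-diagonal blocks of $v_i^* X v_i$ and of $f(v_i^* X v_i)$ tend to $0$.
\item Hence a subnet of the pairs $(v_i^* X v_i, f(v_i^* X v_i)) \in \operatorname{Epi}(f)$ converges to a pair of the form $(\gamma^* x \gamma \oplus W,\ (1_p \otimes \gamma)^* f(x)(1_p \otimes \gamma) \oplus W')$.
\item Closedness of $\operatorname{Epi}(f)$, together with $f$ respecting this direct sum, gives the nc Jensen inequality $f(\gamma^* x \gamma) \le (1_p \otimes \gamma)^* f(x) (1_p \otimes \gamma)$.
\end{enumerate}
Then $\mu(f) \geq f(x)$ for every $\mu$ with barycentre $x$. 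So $G(x) = [f(x),\infty)$, and $\operatorname{Graph}(G) = \operatorname{Epi}(f)$ is closed by hypothesis.
\end{enumerate}
Under the closed-epigraph reading the statement therefore collapses to Jensen's inequality. The substantive case is the continuous one, and your argument, minus the approximation lemma, handles it completely.
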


For the most part, multivalued nc functions can be treated like (single-valued) nc functions. However, extra caution is sometime required, as we will now explain.

Let $K$ be a compact nc convex set. For $m,n \leq \infty$, let $\psi : \M_n \to \M_m$ be a unital completely positive map and let $x \in K_n$. Define $\mu : \rC(K) \to \M_m$ by $\mu = \psi \circ \delta_x$. Then $\mu$ is a unital completely positive map. Let $(y,\alpha) \in K_p \times \M_{p,m}$ be a representation of $\mu$. Then for $q \leq \infty$ and $f \in \M_q(\rC(K))$,
\[
\mu(f) = (1_q \otimes \alpha^*) f(x) (1_q \otimes \alpha).
\]

Now let $F : K \to \M_q(\M)$ be a multivalued function. In the next section, we will need to consider expressions of the form
\[
(\id_{\M_q} \otimes \psi)(F(x)) = \{ (\id_{\M_q} \otimes \psi)(\beta) : \beta \in F(x) \}.
\]
Despite the fact that $\mu = \psi \circ \delta_x$, this expression does not necessarily agree with the expression
\[
(\id_{\M_q} \otimes \mu)(F) = (1_q \otimes \alpha^*) F(x) (1_q \otimes \alpha),
\]
as the following example will show.

\begin{exam}\label{exam:difference-between-multivalued-function-evaluation}
Define $a_1,a_2 \in \M_2$ by
\[
a_1 = \left[\begin{matrix}1 & 0 \\ 0 & 0\end{matrix}\right], \quad a_2 = \left[\begin{matrix}1/2 & 1/2 \\ 1/2 & 1/2\end{matrix}\right].
\]
Let $K$ be a closed nc $2$-ball containing the tuple $a = (a_1,a_2) \in \M_2^2$, and let $h_1,h_2 \in \rA(K)$ denote the coordinate functions, as in Example~\ref{exam:nc-k-ball-functions}, so that $h_1(a) = a_1$ and $h_2(a) = a_2$.

Let $\mu : \rC(K) \to \bC$ denote the state defined by $\mu(f) = \tr_2(f(a))$ for $f \in \rC(K)$. Then $(b,\beta) \in K_4 \times \M_{4,1}$ is a minimal representation for $\mu$, where $b = a \oplus a$ and 
\[
\beta = \left(\frac{1}{\sqrt{2}},0,0,\frac{1}{\sqrt{2}}\right)^T.
\]

Let $F : K \to \M$ be the multivalued nc function defined by
\[
F(x) = \{\alpha \in \M_n : \alpha \geq a_1(x) \text{ and } \alpha \geq a_2(x)\}
\]
for $n \leq \infty$ and $x \in K_n$. Then by definition,
\[
\mu(F) = \beta^* F(b) \beta.
\]

We claim that $\tr_2(F(a)) \ne \mu(F)$. To see this, suppose $\alpha = [\alpha_{i,j}] \in \M_2$ satisfies $\alpha \in F(a)$. Then $\alpha \geq a_1$ and $\alpha \geq a_2$, so $\alpha_{1,1} \geq 1$ and $\alpha_{2,2} \geq 1/2$ respectively.  Note if $\alpha_{1,1} = 1$, then $\alpha \geq a_1$ implies that $\alpha_{1,2} = \alpha_{2,1} = 0$, and thus $\alpha \geq a_2$ implies $\alpha_{2,2} > 1/2$. Hence $\tr_2(\alpha) > 3/4$, so $\tr(F(a)) \subseteq (3/4,\infty)$. 

On the other hand, define $\gamma_1,\gamma_2 \in \M_2$ by $\gamma_1 = 1_2$ and $\gamma_2 = a_1 + a_2$. Then $\gamma_1,\gamma_2 \geq a_1,a_2$, so $\gamma_1,\gamma_2 \in F(a)$. Since $F$ is a multivalued nc function and $a_1 \oplus a_2 = b$, $F(a_1) \oplus F(a_2) \subseteq F(b)$. Hence $\gamma_1 \oplus \gamma_2 \in F(b)$, giving
\[
3/4 = \beta^* (\gamma_1 \oplus \gamma_2) \beta \in \beta^* F(b) \beta = \mu(F).
\]
Therefore, $\tr_2(F(a)) \ne \mu(F)$.
\end{exam}

\section{Noncommutative majorization}  \label{sec:nc-majorization}

In this section, we will define the noncommutative majorization order and prove a characterization of it that completely resolves Problem \ref{intro-problem:motivation}.

\begin{defn}\label{defn:nc-maj-fnc}
Let $K$ be a compact nc convex set.  A \emph{nc majorization function} is a bounded lower semi-continuous convex multivalued nc function $F : K \to \M_p(\M)_{sa}$ for some $p \leq \infty$.
\end{defn}

Let $K$ be a compact nc convex set. For $p \leq \infty$, let $F : K \to \M_p(\M)$ be a multivalued nc function. For $m,n \leq \infty$, let $\psi : \M_n \to \M_m$ be a unital completely positive map. Recall that for $x \in K_n$,
\[
(\id_{\M_p} \otimes \psi)(F(x)) = \{(\id_{\M_p} \otimes \psi)(\alpha) : \alpha \in F(x)\}.
\]
For $x,y \in K_n$, we will write
\[
(\id_{\M_p} \otimes \psi)(F(x)) \leq (\id_{\M_p} \otimes \psi)(F(y))
\]
if for every element $\beta \in (\id_{\M_p} \otimes \psi)(F(y))$, there is an element $\alpha \in (\id_{\M_p} \otimes \psi)(F(x))$ such that $\alpha \leq \beta$. It follows from Definition \ref{defn:multivalued-nc-function} that this is equivalent to the inclusion
\[
(\id_{\M_p} \otimes \psi)(F(x)) \supseteq (\id_{\M_p} \otimes \psi)(F(y)).
\]

\begin{defn} \label{defn:nc-majorization}
For $m,n \in \bN$, let $\psi : \M_n \to \M_m$ be a faithful unital completely positive map. For $k \in \bN$ and $k$-tuples of self-adjoint matrices $a = (a_1,\ldots,a_k), b = (b_1,\ldots,b_k) \in \M_n^k$, we say that $a$ is {\em $\psi$-majorized} by $b$ and write $a \prec_\psi b$ if
\[
(\id_{\M_p} \otimes \psi)(F(a)) \leq (\id_{\M_p} \otimes \psi)(F(b))
\]
for every $p \leq \infty$ and every nc majorization function $F : K \to \M_p(\M)$, where $K$ is any closed nc $k$-ball containing $a$ and $b$. If $\psi = \tr_n$, where $\tr_n : \M_n \to \bC$ is the normalized trace, then we will simply say that $a$ is {\em majorized} by $b$ and write $a \prec b$. 
\end{defn}

\begin{thm} \label{thm:main}
For $m,n \in \bN$, let $\psi : \M_n \to \M_m$ be a faithful unital completely positive map. For $k \in \bN$, let $a = (a_1,\ldots,a_k), b = (b_1,\ldots,b_k) \in \M_n^k$ be $k$-tuples of self-adjoint matrices, and let $\mathcal{B} \subseteq \M_n$ denote the unital C*-algebra generated by $b_1,\ldots,b_k$. The following are equivalent:
\begin{enumerate}
\item $a \prec_\psi b$;
\item There is a unital completely positive map $\Phi : \mathcal{B} \to \M_n$ such that $\psi \circ \Phi = \psi$ and $\Phi(b_i) = a_i$ for all $1 \leq i \leq k$.
\end{enumerate}
\end{thm}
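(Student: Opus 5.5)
The plan is to prove (2)$\Rightarrow$(1) directly from convexity of nc majorization functions, and (1)$\Rightarrow$(2) by Hahn--Banach separation in a finite-dimensional space of maps, using Theorem~\ref{thm:DK-7.4.3} to turn a separating functional into a single nc majorization function.

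\textbf{(2)$\Rightarrow$(1).} Let $\Phi$ be as in (2) and fix an nc majorization function $F : K \to \M_p(\M)$. By Stinespring, extend $\Phi$ to a unital completely positive map on $\M_n$ (Arveson extension) and write $\Phi(x) = V^*\pi(x) V$, with $\pi$ a unital $*$-representation of $\M_n$ on $\bC^n\otimes\bC^r$ and $V$ an isometry. Then $a = V^*(b\otimes 1_r)V$, after identifying $\pi(b)$ with $b\otimes 1_r$ up to a unitary.

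Next, $F$ is convex, its graph is closed under direct sums, and $F$ is unitarily equivariant. Take $\beta\in F(b)$. Then $\beta\otimes 1_r\in F(b\otimes 1_r)$, and the Jensen-type property of the graph gives
\[
(1_p\otimes V)^*(\beta\otimes 1_r)(1_p\otimes V)\in F(a).
\]
This element is $(\id_{\M_p}\otimes\Phi)(\beta)$, computed with the extension of $\Phi$. Applying $\id\otimes\psi$ and using $\psi\circ\Phi=\psi$, we get $(\id\otimes\psi)(\beta)\in(\id\otimes\psi)(F(a))$, which is exactly the inclusion required in (1).

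For this to work we need $\psi\circ\tilde\Phi=\psi$ on all of $\M_n$, not just on $\B$. To avoid the issue, I would compose with the Stinespring representation of $\Phi$ on $\B$ itself. The direct-sum and compression closure of the graph only involves $\pi(b)$, which is a direct sum of copies of the irreducible parts of $b$. Since $F(b)$ is determined by its values on those parts, $\beta$ restricted appropriately lies in $F(\pi(b))$. So the implication needs only $\psi\circ\Phi=\psi$ on $\B$.

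\textbf{(1)$\Rightarrow$(2).} This is the main direction. Let $S$ be the set of unital completely positive maps $\Phi:\B\to\M_n$ with $\psi\circ\Phi=\psi$; it is a compact convex subset of a finite-dimensional space. Consider the convex compact set
\[
E=\{(\Phi(b_1),\dots,\Phi(b_k)) : \Phi\in S\}\subseteq(\M_n)_{\sa}^k.
\]
If $a\notin E$, there are self-adjoint $c_1,\dots,c_k\in\M_n$ and $t\in\bR$ with
\[
\sum_i \tr(c_i a_i) > t \ge \sum_i \tr(c_i\Phi(b_i)) \quad\text{for all }\Phi\in S.
\]
The task is to convert this separation into an nc majorization function $F$ that violates (1).

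The natural candidate is the supremum of affine functions. Dualize the constraint $\psi\circ\Phi=\psi$ with Lagrange multipliers $d\in(\M_m)_{\sa}$, so that each functional becomes $\Phi\mapsto\sum_i\tr(c_i\Phi(b_i))-\tr(d\,\psi(\Phi(1))-d\,\psi(1))$. The supremum over all unital completely positive maps $\Phi:\B\to\M_n$ of a linear functional $L$ is a maximum over points of the nc state space of $\B$ at level $n$. That is equivalent to a matrix inequality of the form $\sum_i b_i\otimes\gamma_i+\dots\le \beta$, i.e.\ to membership in $F(b)$ for
\[
F=\sup\Big\{\textstyle\sum_i h_i\otimes\gamma_i+1\otimes\gamma_0\Big\},
\]
the nc majorization function of order $p=n$ built from finitely many continuous affine functions (Example~\ref{exam:multiple-affine-to-a-single}). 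Concretely, I would take $F(x)=\{\alpha\in\M_n(\M)_{\sa}:\alpha\ge\sum_i c_i^T\otimes x_i\}$, a single affine function, and pair it with $\psi^*$ of the multiplier.

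By duality for the semidefinite program, which satisfies Slater's condition because $\psi$ is faithful (the map $\Phi$ given by the trace-preserving conditional expectation is strictly feasible in the relevant sense), separation yields $\beta\in F(b)$ with $(\id\otimes\psi)(\beta)$ not dominated by any element of $(\id\otimes\psi)(F(a))$. This contradicts (1).

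\textbf{Main obstacle.} I expect the hardest part to be this duality step: matching the SDP dual exactly with the order-$p$ evaluation $(\id_{\M_p}\otimes\psi)(F(\cdot))$. This is where the multivaluedness is genuinely needed, as Example~\ref{exam:difference-between-multivalued-function-evaluation} indicates. I would first try the single affine function $\sum_i h_i\otimes c_i$ of order $n$, then enlarge to finite suprema if strong duality forces extra constraints, invoking Theorem~\ref{thm:DK-7.4.3} to confirm the result is lower semicontinuous, convex and bounded.
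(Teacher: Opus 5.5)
\textbf{(2)$\Rightarrow$(1).} Your first argument is correct as far as it goes. Using the Arveson extension $\tilde\Phi$, its Stinespring dilation, and closure of $\operatorname{Graph}(F)$ under direct sums and compressions, you get $(\id_{\M_p}\otimes\tilde\Phi)(\beta)\in F(a)$. You also rightly notice that comparing $\psi$-values then needs $\psi\circ\tilde\Phi=\psi$ on all of $\M_n$, because $\beta\in F(b)$ need not lie in $\M_p(\B)$.

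Your repair does not address this. Any use of a dilation of $\Phi|_{\B}$ still compresses something built from $\beta$, and $\psi\circ\Phi=\psi|_{\B}$ says nothing about its $\psi$-value. In fact no repair exists, because the implication can fail when $\psi$ is not tracial. Here is a counterexample.
\begin{itemize}
\item Let $\psi(x)=\tfrac16x_{11}+\tfrac13x_{22}+\tfrac12x_{33}$ on $\M_3$, $k=1$, $b=P=\tfrac12(E_{11}+E_{13}+E_{31}+E_{33})$ and $a=Q=E_{22}$, with $E_{ij}$ the standard matrix units.
\item Then $\psi(P)=\tfrac13=\psi(Q)$, so the map $\lambda P+\mu(1-P)\mapsto\lambda Q+\mu(1-Q)$ satisfies (2).
\item Let $G$ be the supremum of the affine functions $\operatorname{Re}h_1=\tfrac12(h_1+h_1^*)$ and $\1-\operatorname{Re}h_1$. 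At self-adjoint $x$ this is $G(x)=\{\alpha:\alpha\ge x,\ \alpha\ge 1-x\}$.
\item $\psi$ is invariant under the pinching $\alpha\mapsto Q\alpha Q+(1-Q)\alpha(1-Q)$. Hence every $\alpha\in G(Q)$ has $\psi(\alpha)\ge\psi(Q)+\psi(1-Q)=1$.
\item On the other hand, $\operatorname{diag}(\tfrac32,1,\tfrac34)\in G(P)$ and has $\psi$-value $\tfrac{23}{24}$.
\item Let $r\ge1$ be the radius of $K$, and let $F$ be the convex envelope of $x\mapsto\{\alpha\in G(x):\|\alpha\|\le 1+r\}+(\text{positives})$. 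Then $F$ is an nc majorization function with $\operatorname{diag}(\tfrac32,1,\tfrac34)\in F(P)$ and $F(Q)\subseteq G(Q)$.
\item So $a\prec_\psi b$ fails while (2) holds.
\end{itemize}

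Thus (2) should be strengthened to a ucp $\Phi$ on all of $\M_n$ with $\psi\circ\Phi=\psi$. Under that hypothesis your first argument is correct and is the paper's argument in another guise. The paper passes through affine minorants via Theorem~\ref{thm:DK-7.4.3}, and its step $\alpha=(\id_{\M_p}\otimes\Phi)(\beta)$ with $\beta\in\M_p(\M_n)$ tacitly uses the same hypothesis. For $\psi=\tr_n$ nothing is lost. The trace-preserving conditional expectation onto $\B$ is averaging over the unitary group of $\B'$; it maps $F(b)$ into itself and preserves $\tr_n$. That is the correct version of your ``restrict $\beta$'' idea.

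\textbf{(1)$\Rightarrow$(2).} Separating $a$ from $E$ is fine, but the step that carries all the content is not carried out: producing an nc majorization function that violates (1). The candidate you name cannot work. For a single self-adjoint $h\in\M_p(\rA(K))$, condition (1) for $h$ and $-h$ amounts exactly to $(\id_{\M_p}\otimes\psi)(h(a))=(\id_{\M_p}\otimes\psi)(h(b))$, i.e.\ $\psi(a_i)=\psi(b_i)$. This is far weaker than (2): for $k=1$ and $\psi=\tr_n$, equal traces do not give $a\prec b$. Invoking Slater's condition does not fill the gap either. It yields a dual certificate for the problem over $S$, not an nc majorization function whose values under $\id_{\M_p}\otimes\psi$ realize that certificate.

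The missing idea is a genuinely multivalued test function that encodes complete positivity and unitality of the map to be built, together with a way to read that map off. The paper does this as follows.
\begin{itemize}
\item Lift the Choi matrix $c=\sum_{e\in\mathcal U}e\otimes e$ of $\B$ to a positive $g=\sum_e e\otimes g_e\in\M_n(\rC(K))$.
\item Set $h=\1-\sum_{e\in\mathcal U_d}g_e$ and $f=\bigoplus_e(g_e\oplus -g_e)\oplus h^*h\oplus(-h^*h)$.
\item Apply (1) to $F=\overline f$ at the point $f(b)\in F(b)$. Theorem~\ref{thm:DK-7.5.1} then gives a ucp $\phi:\rC(K)\to\M_n$ with barycentre $a$ and $\psi\circ\phi(g_e)=\psi(g_e(b))$.
\item Faithfulness of $\psi$ and Cauchy--Schwarz force $\phi(h)=0$, and $\Phi(e):=\phi(g_e)$ is ucp and $\psi$-preserving on $\B$.
\end{itemize}
To ensure $\Phi(b_i)=a_i$, one must also include summands $\kappa_i^*\kappa_i$ with $\kappa_i=h_i-\sum_e\lambda_{i,e}g_e$, where $b_i=\sum_e\lambda_{i,e}e$. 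The same argument then gives $\phi(\kappa_i)=0$. The paper omits this, and it is not automatic.

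Your Hahn--Banach route can probably be rescued as well, at least when $\psi$ is a state with density $\rho$; this is a sketch, not a full check. Test against the (truncated) suprema $\{\alpha:1_n\otimes\alpha\ge h(x)\}$ with $h\in\M_n(\rA(K))$. SDP duality turns their minimal $\psi$-values into support functions of $\{(\Theta(1),\Theta(x_1),\dots,\Theta(x_k))\}$, where $\Theta$ ranges over CP maps $\M_n\to\M_n$ with $n\tr_n\circ\Theta=\psi$, up to a transpose. Choosing $\Theta=\rho^{1/2}(\cdot)\rho^{1/2}$ then yields a ucp $\Psi$ on all of $\M_n$ with $\psi\circ\Psi=\psi$ and $\Psi(b_i)=a_i$. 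This is consistent with the counterexample above.
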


The following corollary is almost immediate.

\begin{cor} \label{cor:bistochastic}
For $k,n \in \bN$ and $k$-tuples of self-adjoint matrices $a = (a_1,\ldots,a_k), b = (b_1,\ldots,b_k) \in \M_n^k$, the following are equivalent:
\begin{enumerate}
\item $a \prec b$;
\item There is a trace-preserving unital completely positive map, i.e. a bistochastic quantum channel, $\Phi : \M_n \to \M_n$ such that $\Phi(b_i) = a_i$ for all $1 \leq i \leq k$.
\end{enumerate}
\end{cor}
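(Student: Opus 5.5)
The plan is to derive Corollary~\ref{cor:bistochastic} directly from Theorem~\ref{thm:main} with $\psi = \tr_n$ (so $m = 1$). The trace is a faithful unital completely positive map, and by Definition~\ref{defn:nc-majorization} the relation $a \prec b$ is exactly $a \prec_{\tr_n} b$. Theorem~\ref{thm:main} therefore says that $a \prec b$ holds if and only if there is a unital completely positive map $\Phi_0 : \B \to \M_n$ with $\tr_n \circ \Phi_0 = \tr_n$ on $\B$ and $\Phi_0(b_i) = a_i$, where $\B$ is the unital C*-algebra generated by $b_1,\ldots,b_k$. What remains is to pass between maps defined on $\B$ and maps defined on all of $\M_n$.

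For (2) $\Rightarrow$ (1): if $\Phi : \M_n \to \M_n$ is trace-preserving, unital and completely positive with $\Phi(b_i) = a_i$, then its restriction $\Phi|_\B$ is unital and completely positive. It still satisfies $\tr_n \circ \Phi|_\B = \tr_n|_\B$ and still sends $b_i$ to $a_i$. Theorem~\ref{thm:main} then gives $a \prec b$.

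For (1) $\Rightarrow$ (2), the step needing care, we take the $\Phi_0$ from Theorem~\ref{thm:main} and let $E : \M_n \to \B$ be the trace-preserving conditional expectation onto $\B$.
\begin{itemize}
\item $\B$ is a finite-dimensional unital C*-subalgebra of $\M_n$, so $E$ exists: it is the orthogonal projection onto $\B$ for the Hilbert--Schmidt inner product $\langle x,y\rangle = \tr_n(y^*x)$.
\item $E$ is unital, completely positive (being a conditional expectation), and satisfies $\tr_n \circ E = \tr_n$.
\end{itemize}
Set $\Phi = \Phi_0 \circ E : \M_n \to \M_n$. Then $\Phi$ is unital and completely positive as a composition of such maps. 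It is trace-preserving because $\tr_n(\Phi(x)) = \tr_n(\Phi_0(E(x))) = \tr_n(E(x)) = \tr_n(x)$. Since each $b_i \in \B$ gives $E(b_i) = b_i$, we get $\Phi(b_i) = \Phi_0(b_i) = a_i$.

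The only point needing justification is the existence and properties of $E$. I would check that the Hilbert--Schmidt orthogonal projection onto $\B$ is a $\B$-bimodule map: for $c \in \B$ and $y \in \B$, $\tr_n(y^*(cx)) = \tr_n((c^*y)^* x)$, so $\B$-orthogonality is preserved. Since $E$ is a unital idempotent $\B$-bimodule map onto $\B$, Tomiyama's theorem identifies it as a conditional expectation. Alternatively, I would use the structure theorem: $\B \cong \bigoplus_j \M_{n_j} \otimes 1_{m_j}$ up to unitary conjugation, and $E$ can be written explicitly as compressions followed by partial traces. Everything else is formal.
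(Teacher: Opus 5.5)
Your proposal is correct and is the paper's proof: apply Theorem~\ref{thm:main} with $\psi = \tr_n$, then precompose with the trace-preserving conditional expectation $E : \M_n \to \B$. You also write out the converse by restriction, which the paper leaves implicit. One small caution on your side justification: Tomiyama's theorem needs $E$ to have norm one, and a unital idempotent $\B$-bimodule map need not be positive. Positivity of the Hilbert--Schmidt projection instead follows from $\tr_n(E(x)p) = \tr_n(xp) \geq 0$ for every projection $p \in \B$, with complete positivity from the same argument applied to $\id_{\M_r} \otimes E$; your structure-theorem description also suffices.
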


\begin{proof}
Let $\mathcal{B} \subseteq \M_n$ denote the unital C*-algebra generated by $b_1,\ldots,b_k$. Apply Theorem \ref{thm:main} with $\psi = \tr_n$ to obtain a trace-preserving unital completely positive map $\Phi' : \mathcal{B} \to \M_n$ satisfying $\Phi'(b_i) = a_i$ for all $1 \leq i \leq k$. Let $E : \M_n \to \mathcal{B}$ be a trace-preserving conditional expectation onto $\mathcal{B}$ and take $\Phi = \Phi' \circ E$.
\end{proof}

Theorem \ref{thm:main} will follow from a more general result, which will be utilized in Section \ref{sec:quantum-channel-interpolation}, when we extend Corollary \ref{cor:bistochastic} to the non-unital setting. Before stating the result, it will be convenient to introduce the following more general definition of nc majorization.

\begin{defn} \label{defn:extended-nc-majorization}
For $m,n_1,n_2 \in \bN$, let $\psi_1 : \M_{n_1} \to \M_m$ and $\psi_2 : \M_{n_2} \to \M_m$ be unital completely positive maps with $\psi_1$ faithful. For $k \in \bN$ and $k$-tuples of self-adjoint matrices $a = (a_1,\ldots,a_k) \in \M_{n_1}^k$ and $b = (b_1,\ldots,b_k) \in \M_{n_2}^k$, we will say that $a$ is $(\psi_1,\psi_2)$-majorized by $b$ and write $a \prec_{\psi_1,\psi_2} b$ if
\[
(\id_{\M_p} \otimes \psi_1)(F(a)) \leq (\id_{\M_p} \otimes \psi_2)(F(b))
\]
for every $p \leq \infty$ and every nc majorization function $F : K \to \M_p(\M)$, where $K$ is any closed nc $k$-ball containing $a$ and $b$.
\end{defn}

\begin{thm} \label{thm:key}
For $m,n_1,n_2 \in \bN$, let $\psi_1 : \M_{n_1} \to \M_m$ and $\psi_2 : \M_{n_2} \to \M_m$ be unital completely positive maps with $\psi_1$ faithful. For $k \in \bN$, let $a = (a_1,\ldots,a_k) \in \M_{n_1}^k$ and $b = (b_1,\ldots,b_k) \in \M_{n_2}^k$ be $k$-tuples of self-adjoint matrices and let $\mathcal{B} \subseteq \M_{n_2}$ denote the unital C*-algebra generated by $b_1,\ldots,b_k$. The following are equivalent:
\begin{enumerate}
\item $a \prec_{\psi_1,\psi_2} b$;
\item There is a unital completely positive map $\Phi : \mathcal{B} \to \M_{n_1}$ such that $\psi_1 \circ \Phi = \psi_2$ and $\Phi(b_i) = a_i$ for all $1 \leq i \leq k$.
\end{enumerate}
\end{thm}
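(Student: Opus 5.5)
The plan is to prove (2)$\Rightarrow$(1) by a direct Jensen-type argument, and (1)$\Rightarrow$(2) by contraposition: a Hahn–Banach separation in a finite-dimensional space of maps produces a matrix of affine nc functions, and the supremum of that matrix is the nc majorization function that violates (1). Throughout, fix a closed nc $k$-ball $K$ containing $a$ and $b$.

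\emph{(2)$\Rightarrow$(1).} Let $\Phi : \mathcal{B} \to \M_{n_1}$ be as in (2), extended by Arveson's theorem to a unital completely positive map on $\M_{n_2}$. Let $F : K \to \M_p(\M)$ be an nc majorization function and let $\beta \in F(b)$. It suffices to show that $(\id_{\M_p}\otimes\Phi)(\beta) \in F(a)$. Indeed, applying $\id\otimes\psi_1$ would then give
\[
(\id\otimes\psi_2)(\beta) = (\id\otimes\psi_1)\bigl((\id\otimes\Phi)(\beta)\bigr) \in (\id\otimes\psi_1)(F(a)).
\]
To prove the claim, write a Stinespring dilation $\Phi = V^*\pi(\cdot)V$ with $V$ an isometry and $\pi$ a unital $*$-representation of $\M_{n_2}$. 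Then $\pi$ is unitarily equivalent to an ampliation $b\otimes 1_r$, up to a unitary.
\begin{enumerate}
\item Since $F$ is a multivalued nc function, it is upward directed and respects direct sums, as used in Example~\ref{exam:difference-between-multivalued-function-evaluation}. Hence $\beta\otimes 1_r \in F(b\otimes 1_r)$.
\item Since $\operatorname{Graph}(F)$ is nc convex, it is closed under compressions by isometries. Compressing by $V$ yields $(a, (\id\otimes\Phi)(\beta)) \in \operatorname{Graph}(F)$, using $\Phi(b_i) = a_i$.
\end{enumerate}
The direct-sum compatibility of multivalued nc functions may need a short verification from the definitions.

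\emph{(1)$\Rightarrow$(2).} Suppose no such $\Phi$ exists. Consider the set $S$ of all unital completely positive maps $\Phi : \mathcal{B}\to\M_{n_1}$ with $\psi_1\circ\Phi = \psi_2$. This is a compact convex subset of the finite-dimensional space $L(\mathcal{B},\M_{n_1})$. Its image under the affine map
\[
\Phi \mapsto (\Phi(b_1),\ldots,\Phi(b_k))
\]
is compact and convex, and by assumption it does not contain $a$. A cleaner route is to dualize through the semidefinite program with Choi matrices, which gives a separating certificate. Specifically, I expect to find self-adjoint $c_0, c_1, \ldots, c_k \in \M_{n_1}$ (the multipliers for $\Phi(b_i)=a_i$ and $\Phi(1)=1$) and $d \in \M_m$ (the multiplier for $\psi_1\circ\Phi=\psi_2$) such that two conditions hold:
\begin{enumerate}
\item the element $h = c_0\otimes\1 + \sum_i c_i\otimes h_i - \psi_1^*(d)\otimes\1$ evaluates at $b$ to something dominated in the appropriate completely positive sense;
\item the resulting scalar inequality fails at $a$.
\end{enumerate}
Here $\psi_1^*$ denotes the adjoint with respect to trace pairings.

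The key step is to convert this certificate into a higher-order continuous affine nc function $h \in \M_q(\M_p(\rA(K)))$, and to take $F$ to be the supremum of the entries or blocks of $h$, as in Example~\ref{exam:multiple-affine-to-a-single}. This $F$ is automatically lower semicontinuous and convex. Boundedness would be arranged by including the constant $\lambda\1$ among the functions being supremized, for large $\lambda$. Faithfulness of $\psi_1$ should then make an element $\gamma \in F(b)$ with $(\id\otimes\psi_2)(\gamma)$ small incompatible with any element of $F(a)$ under $\id\otimes\psi_1$, which contradicts (1).

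The main obstacle is exactly this translation. One must find the right $p$, $q$ and affine functions so that the failure of the membership of $a$ in the image of $S$ becomes a failure of the inclusion
\[
(\id_{\M_p}\otimes\psi_1)(F(a)) \supseteq (\id_{\M_p}\otimes\psi_2)(F(b)).
\]
I would first try taking $p = n_1$ and encoding $\Phi$ through its action $\beta\mapsto(\id\otimes\Phi)(\beta)$ on $\M_p(\mathcal{B})$. Then I would use Theorem~\ref{thm:DK-7.4.3} and Theorem~\ref{thm:DK-7.5.1} to identify $F(a)$ with sets of compressions arising from maps with barycentre $a$. The hope is that (1) forces the existence of a ucp map sending $b$ to $a$ that intertwines $\psi_1$ and $\psi_2$, obtained by a compactness and minimax argument over $S$.
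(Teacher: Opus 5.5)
\textbf{Main gap: (1)$\Rightarrow$(2).} This direction is a plan rather than a proof, and the concrete ingredients you name would not work.

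First, your certificate is too small. The constraints $\psi_1\circ\Phi=\psi_2$ and complete positivity of $\Phi$ are imposed on all of $\mathcal{B}$. So in any SDP dual their multipliers are indexed by all of $\mathcal{B}$, i.e.\ they involve arbitrary polynomials in the $b_i$. A certificate $c_0\otimes\1+\sum_i c_i\otimes h_i-\psi_1^*(d)\otimes\1$ with one constant multiplier $d$ only sees the operator system spanned by $1,b_1,\dots,b_k$, so it cannot encode these constraints. Affine tests do suffice in the end, but the paper obtains them only a posteriori in Section~\ref{sec:effective}, by approximating the convex envelope of a non-affine function.

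Second, your boundedness device collapses. If $\lambda\1$ is added to finitely many affine functions with $\|h_j\|_\infty\le\lambda$, the supremum is $[\lambda 1,\infty)$ at every point, i.e.\ the constant function $\lambda$.

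The missing idea is that no separation argument is needed: Theorem~\ref{thm:DK-7.5.1}, which you mention, does all the work once it is applied to the right non-affine function. The paper proceeds as follows.
\begin{enumerate}
\item Lift the Choi matrix $c=\sum_{e\in\mathcal{U}}e\otimes e$ of a system of matrix units of $\mathcal{B}$ to a positive $g=\sum_e e\otimes g_e\in\M_{n_2}(\rC(K))$.
\item Form $f=\bigoplus_e(g_e\oplus -g_e)\oplus h^*h\oplus(-h^*h)$ with $h=1-\sum_{e\in\mathcal{U}_d}g_e$, and apply (1) to $F=\overline f$ at the single element $(\id\otimes\psi_2)(f(b))$.
\item Theorem~\ref{thm:DK-7.5.1} then yields a ucp $\phi:\rC(K)\to\M_{n_1}$ with barycentre $a$ and $(\id\otimes\psi_1\circ\phi)(f)\le(\id\otimes\psi_2)(f(b))$.
\item The $\pm$ blocks force $\psi_1(\phi(g_e))=\psi_2(g_e(b))$, and Cauchy--Schwarz plus faithfulness of $\psi_1$ give $\phi(h)=0$.
\item The map $\Phi(e):=\phi(g_e)$ is ucp on $\mathcal{B}$, since $(\id\otimes\Phi)(c)=(\id\otimes\phi)(g)\ge 0$.
\end{enumerate}
To also get $\Phi(b_i)=a_i$, add blocks $k_i^*k_i$, where $k_i=h_i-\sum_e\lambda^{(i)}_e g_e$, $h_i$ are the coordinate functions, and $b_i=\sum_e\lambda^{(i)}_e e$. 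The same Cauchy--Schwarz step then gives $\phi(k_i)=0$. The written proof omits this step.

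\textbf{(2)$\Rightarrow$(1).} Your argument by direct sums and compression inside the nc convex set $\operatorname{Graph}(F)$ correctly shows that $(\id\otimes\tilde\Phi)(\beta)\in F(a)$ for any ucp extension $\tilde\Phi$ of $\Phi$ to $\M_{n_2}$. It is a clean alternative to the paper's route through Theorem~\ref{thm:DK-7.4.3}.

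However, your last identity needs $\psi_1\circ\tilde\Phi=\psi_2$ on all of $\M_{n_2}$, because $\beta\in F(b)$ need not lie in $\M_p(\mathcal{B})$. Arveson's theorem gives no such extension. The paper makes the same move silently when it applies $\id\otimes\Phi$ to $\beta\in\M_p(\M_{n_2})$, and the step can genuinely fail.

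Here is a counterexample.
\begin{enumerate}
\item Take $\psi_1=\tr_2$ and the faithful state $\psi_2(x)=\tr_2(x)+\frac14(x_{12}+x_{21})$. Let $a=b=\mathrm{diag}(1,-1)$ and let $\Phi$ be the inclusion of the diagonal algebra $\mathcal{B}$. Then (2) holds.
\item Work on the nc ball of radius $2$ and let $F=\overline f$ with $f=\1+|\Re(h_1)|$.
\item Let $y=\mathrm{diag}(2,-2,0)$ and let $V$ be the isometry with columns $\frac14(3,-1,\sqrt6)^T$ and $\frac14(-1,3,\sqrt6)^T$. Then $V^*yV=b$, and $\beta=V^*f(y)V=\left[\begin{smallmatrix}9/4&-3/4\\-3/4&9/4\end{smallmatrix}\right]$ lies in $F(b)$ with $\psi_2(\beta)=15/8$.
\item On the other hand, $\1\pm\Re(h_1)\le f$ are affine minorants. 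So every $\alpha\in F(a)$ satisfies $\alpha\ge 1\pm a$, hence $\tr_2(\alpha)\ge 2>15/8$, and (1) fails.
\end{enumerate}

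Both your argument and the paper's are valid whenever $\Phi$ admits an extension $\tilde\Phi$ with $\psi_1\circ\tilde\Phi=\psi_2$ on $\M_{n_2}$. For example, take $\tilde\Phi=\Phi\circ E$ when $\psi_2\circ E=\psi_2$ for a conditional expectation $E$ onto $\mathcal{B}$. This covers the tracial case of Corollary~\ref{cor:bistochastic}.
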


\begin{proof}
(2) $\Rightarrow$ (1) For $p \leq \infty$, let $F : K \to \M_p(\M)$ be an nc majorization function. By \cite{DK2019}*{Theorem 7.4.3}, it suffices to prove that for $q \leq \infty$ and self-adjoint $h \in \M_q(\M_p(\rA(K)))$ with $h \leq 1_q \otimes F$, whenever $\beta \in (\M_p(\M_{n_2}))_{\sa}$ satisfies $1_q \otimes \beta \geq h(b)$, then there is an $\alpha \in (\M_p(\M_{n_1}))_{\sa}$ satisfying $1_q \otimes \alpha \geq h(a)$ and $(\id_{\M_p} \otimes \psi_1)(\alpha) \leq (\id_{\M_p} \otimes \psi_2)(\beta)$. 

For $h$ and $\beta$ as above, let $\alpha = (\id_{\M_p} \otimes \Phi)(\beta)$. Then
\begin{align*}
1_q \otimes \alpha &= (\id_{\M_q} \otimes \id_{\M_p} \otimes \Phi)(1_q \otimes \beta) \\
&\geq (\id_{\M_q} \otimes \id_{\M_p} \otimes \Phi)(h(b)) \\
&= h(\Phi(b)) \\
&= h(a),
\end{align*}
and
\begin{align*}
(\id_{\M_p} \otimes \psi_1)(\alpha) &= (\id_{\M_p} \otimes \psi_1)((\id_{\M_p} \otimes \Phi)(\beta)) \\
&= (\id_{\M_p} \otimes (\psi_1 \circ \Phi))(\beta) \\
&= (\id_{\M_p} \otimes \psi_2)(\beta),
\end{align*}
as desired.

(1) $\Rightarrow$ (2)
Note that $\mathcal{B}$ is the image of $\rC(K)$ under the *-homomorphism $\delta_b$. Let $\mathcal{U} \subseteq \M_{n_2}$ be a complete set of matrix units for $\mathcal{B}$ and let $c = \sum_{e \in \mathcal{U}} e \otimes e \in \M_{n_2} \otimes \M_{n_2}$ denote the corresponding Choi matrix. In other words, $c$ is the direct sum of the Choi matrices for each of the full matrix summands of $\mathcal{B}$.

Since $c$ is in the range of the point evaluation *-homomorphism $\id_{\M_{n_2}} \otimes \delta_b : \M_{n_2}(\rC(K)) \to \M_{n_2} \otimes \M_{n_2}$ and $c\geq 0$, it is a standard fact from the theory of C*-algebras that there is $g \in \M_{n_2}(\rC(K))$ with $g \geq 0$ such that $(\id_{\M_{n_2}} \otimes \delta_b)(g) = c$. Note that $g$ can be written as $g = \sum_{e \in \mathcal{U}} e \otimes g_e$ for $g_e \in \rC(K)$ for each $e \in \mathcal{U}$. Let $\mathcal{U}_d \subseteq \mathcal{U}$ be the set of diagonal matrix units and define self-adjoint $h \in \rC(K)$ by
\[
h = 1_{\rC(K)} - \sum_{e \in \mathcal{U}_d} g_e.
\]

Let $p = 2|\mathcal{U}| + 2$ and define $f \in \M_p(\rC(K))$ by
\[
f = \left( \bigoplus_{e \in \mathcal{U}} (g_e \oplus (-g_e)) \right) \oplus h^*h \oplus (-h^*h).
\]
Let $F := \overline{f}$ denote the convex envelope of $f$. By \cite{DK2019}*{Proposition 7.4.2}, $F$ is an nc majorization function satisfying $F \leq f$. Hence by assumption,
\begin{equation} \label{eq:key-1}
(\id_{\M_p} \otimes \psi_1)(F(a)) \leq (\id_{\M_p} \otimes \psi_2)(F(b)) \leq (\id_{\M_p} \otimes \psi_2)(f(b)).
\end{equation}
On the other hand, by \cite{DK2019}*{Theorem 7.5.1},
\[
F(a) = \bigcup_\phi [(\id_{\M_p} \otimes \phi)(f),\infty),
\]
where the union is taken over all unital completely positive maps $\phi : \rC(K) \to \M_{n_1}$ with barycentre $a$. Applying $\psi_1$ to this equation gives
\begin{align} \label{eq:key-2}
\begin{split}
&(\id_{\M_p} \otimes \psi_1)(F(a)) \\
&\quad = \bigcup_\phi \{ (\id_{\M_p} \otimes \psi_1)(\alpha) : \alpha \in (\M_p(\M_{n_1}))_{\sa},\ \alpha \geq (\id_{\M_p} \otimes \phi)(f) \}.
\end{split}
\end{align}
Together, (\ref{eq:key-1}) and (\ref{eq:key-2}) imply
\begin{align*}
(\id_{\M_p} &\otimes \psi_2)(f(b)) \\
& \in \bigcup_\phi \{ (\id_{\M_p} \otimes \psi_1)(\alpha) : \alpha \in (\M_p(\M_{n_1}))_{\sa},\ \alpha \geq (\id_{\M_p} \otimes \phi)(f) \}.
\end{align*}
Hence there is a unital completely positive map $\phi : \rC(K) \to \M_{n_1}$ with barycentre $a$ and $\alpha \in \M_p(\M_{n_1})$ with $\alpha \geq (\id_{\M_p} \otimes \phi)(f)$ such that
\begin{equation} \label{eq:key-3}
(\id_{\M_p} \otimes \psi_2)(f(b)) = (\id_{\M_p} \otimes \psi_1)(\alpha) \geq (\id_{\M_p} \otimes \psi_1 \circ \phi)(f).
\end{equation}

From (\ref{eq:key-3}) and the definition of $f$, we obtain
\[
(\psi_1 \circ \phi)(g_e) \leq \psi_2(g_e(b)), \quad (\psi_1 \circ \phi)(-g_e) \leq \psi_2(-g_e(b))
\]
for all $e \in \mathcal{U}$, and
\[
(\psi_1 \circ \phi)(h^*h) \leq \psi_2(h^*(b)h(b)), \quad (\psi_1 \circ \phi)(-h^*h) \leq \psi_2(-h^*(b)h(b)).
\]
Hence
\begin{equation} \label{eq:key-4}
(\psi_1 \circ \phi)(g_e) = \psi_2(g_e(b))
\end{equation}
for all $e \in \mathcal{U}$, and
\begin{equation} \label{eq:key-5}
(\psi_1 \circ \phi)(h^*h) = \psi_2(h^*(b)h(b)).
\end{equation}

Observe that
\[
h(b) = 1_{n_2} - \sum_{e \in \mathcal{U}_d} e = 0,
\]
so applying the Cauchy-Schwarz inequality to (\ref{eq:key-5}) gives
\[
0 \leq \psi_1(\phi(h^*)\phi(h)) \leq (\psi_1 \circ \phi)(h^*h) = \psi_2(h^*(b)h(b)) = 0.
\]
It follows from the faithfulness of $\psi_1$ that $\phi(h) = 0$. Hence
\begin{equation} \label{eq:key-6}
\sum_{e \in \mathcal{U}_d} \phi(g_e) = \phi(1_{\rC(K)}) = 1_{n_1}.
\end{equation}

Define $\Phi : \mathcal{B} \to \M_{n_1}$ to be the unique linear map satisfying
\[
\Phi(e) = \phi(g_e)
\]
for all $e \in \mathcal{U}$. Then (\ref{eq:key-4}) implies
\[
\psi_1 \circ \Phi = \psi_2.
\]
We claim that $\Phi$ is unital and completely positive. To see that $\Phi$ is unital, we use (\ref{eq:key-6}) to compute
\[
\Phi(1_{n_2}) = \sum_{e \in \mathcal{U}_d} \Phi(e) = \sum_{e \in \mathcal{U}_d} \phi(g_e) = 1_{n_1}.
\]
To see that $\Phi$ is completely positive, we observe that the image of the Choi matrix $c$ under $\id_{\M_{n_2}} \otimes \Phi$ satisfies
\begin{align*}
(\id_{\M_{n_2}} \otimes \Phi)(c) &= \sum_{e \in \mathcal{U}} e \otimes \Phi(e) \\
&= \sum_{e \in \mathcal{U}} e \otimes \phi(g_e) \\
&= (\id_{\M_{n_2}} \otimes \phi)(g) \\
&\geq 0,
\end{align*}
since $g \geq 0$ and $\phi$ is completely positive.
\end{proof}

\begin{rem}
It follows from the proof that conditions (1) and (2) of Theorem \ref{thm:key} are equivalent to the weaker condition that
\[
(\id_{\M_p} \otimes \psi_1)(F(a)) \leq (\id_{\M_p} \otimes \psi_2)(F(b))
\]
for every $p \in \bN$ with $p \leq 2n_2^2 + 2$ and every nc majorization function $F : K \to \M_p(\M)$. We will say more about this in Section \ref{sec:effective}.
\end{rem}

\section{Interpolation by quantum channels}  \label{sec:quantum-channel-interpolation}

In this section, we will extend Corollary \ref{cor:bistochastic} to the case of trace-preserving completely positive maps, i.e. quantum channels that are not necessarily unital. The statement of the following result utilizes Definition~\ref{defn:extended-nc-majorization}.

\begin{thm} \label{thm:non-unital}
For $k,n_1,n_2 \in \bN$ and $k$-tuples of self-adjoint matrices $a = (a_1,\ldots,a_k) \in \M_{n_1}^k$ and $b = (b_1,\ldots,b_k) \in \M_{n_2}^k$, the following are equivalent:
\begin{enumerate}
\item $a' \prec_{\tr_{n_1},\omega} b'$, where $a' \in \M_{n_1}^k$ and $b' \in \M_{n_2 + 1}$ are defined by
\[
a' = \frac{1}{n_1} a, \quad b' = (b_1 \oplus 0, \ldots, b_k \oplus 0),
\]
and $\omega : \M_{n_2 + 1} \to \bC$ is the state defined by
\[
\omega([x_{ij}]) = \frac{1}{n_1} \sum_{i=1}^{n_2} x_{ii} + \frac{n_1 - 1}{n_1} x_{n_2 + 1, n_2 + 1};
\]
\item There is a trace-preserving completely positive map, i.e. a quantum channel, $\Phi : \M_{n_2} \to \M_{n_1}$ such that $\Phi(b_i) = a_i$ for all $1 \leq i \leq k$. 
\end{enumerate}
\end{thm}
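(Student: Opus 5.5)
The plan is to deduce this from Theorem~\ref{thm:key}, applied with $\psi_1 = \tr_{n_1}$ (faithful), $\psi_2 = \omega$, and the tuples $a' \in \M_{n_1}^k$ and $b' \in \M_{n_2+1}^k$. Let $\mathcal{B}' \subseteq \M_{n_2+1}$ be the unital C*-algebra generated by $b_1 \oplus 0, \ldots, b_k \oplus 0$. Theorem~\ref{thm:key} says that (1) holds if and only if there is a unital completely positive map $\Phi' : \mathcal{B}' \to \M_{n_1}$ with
\[
\tr_{n_1} \circ \Phi' = \omega|_{\mathcal{B}'} \quad \text{and} \quad \Phi'(b_i \oplus 0) = a'_i \ \text{ for } 1 \le i \le k .
\]
So it suffices to show that such a $\Phi'$ exists exactly when a trace-preserving completely positive $\Phi : \M_{n_2} \to \M_{n_1}$ with $\Phi(b_i) = a_i$ exists. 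This is a purely finite-dimensional dictionary and needs no nc convexity.

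For (2) $\Rightarrow$ (1), I would define $\Phi'$ on all of $\M_{n_2+1}$ and then restrict it to $\mathcal{B}'$. Write $x \in \M_{n_2+1}$ in block form with corner $x_{11} \in \M_{n_2}$ and scalar corner $x_{n_2+1,n_2+1}$, and set
\[
\Phi'(x) = \Psi(x_{11}) + x_{n_2+1,n_2+1}\,\bigl(1_{n_1} - \Psi(1_{n_2})\bigr),
\]
where $\Psi$ is a fixed positive multiple of $\Phi$. Then:
\begin{itemize}
\item $\Phi'$ is a sum of two completely positive maps, compression to a corner followed by $\Psi$, and the map $x \mapsto x_{n_2+1,n_2+1} R$ with $R = 1_{n_1} - \Psi(1_{n_2})$, provided $R \ge 0$.
\item $\Phi'$ is unital by construction.
\item $\Phi'(b_i \oplus 0) = \Psi(b_i)$.
\item Trace preservation of $\Phi$ gives $\tr_{n_1}\Psi(y) = c \cdot \tr_{n_2}(y)$ for a constant $c$, which should match the first term of $\omega$. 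The normalization of $\tr_{n_1}(R)$ should then reproduce the weight $(n_1-1)/n_1$ on the last diagonal entry.
\end{itemize}

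For (1) $\Rightarrow$ (2), I would reverse this. From $\Phi'$, define $\Phi(y) = \lambda\, \Phi'(y \oplus 0)$ for $y \in \mathcal{B}$, where $\mathcal{B} \subseteq \M_{n_2}$ is the unital C*-algebra generated by $b_1,\ldots,b_k$. This is legitimate because $y \mapsto y \oplus 0$ is a (non-unital) $*$-homomorphism $\mathcal{B} \to \M_{n_2+1}$ whose range is contained in $\mathcal{B}'$; indeed $\mathcal{B}'$ contains $1 \oplus 0$ through the functional calculus / the matrix-unit description. The constant $\lambda$ is chosen so that $\Phi(b_i) = a_i$, which forces $\lambda = n_1$ since $\Phi'(b_i \oplus 0) = a_i/n_1$. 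The condition $\tr_{n_1} \circ \Phi' = \omega$ on $y \oplus 0$ then says that $\Phi$ is trace-preserving on $\mathcal{B}$ for the unnormalized traces. Finally, I would precompose with a trace-preserving conditional expectation $E : \M_{n_2} \to \mathcal{B}$, exactly as in Corollary~\ref{cor:bistochastic}, to obtain a quantum channel on all of $\M_{n_2}$. Complete positivity is immediate because $\Phi$ is a composition of completely positive maps.

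The main obstacle is the bookkeeping of constants in the forward direction: making the scaling $a' = a/n_1$, the weights in $\omega$, and the normalized versus unnormalized traces consistent, while also guaranteeing $R = 1_{n_1} - \Psi(1_{n_2}) \ge 0$. A trace-preserving $\Phi$ need not satisfy $\Phi(1_{n_2}) \le n_1 1_{n_1}$ or any similar bound a priori. My first attempt would be to verify the identities $\tr_{n_1}\Phi'(y \oplus 0) = \omega(y \oplus 0)$ and $\tr_{n_1}(R) = (n_1-1)/n_1$ directly from the definitions, adjusting $\Psi$ (and if necessary which scalar multiple is used) until both hold. If positivity of $R$ fails, I would instead replace the rank-one corner term by a completely positive map $x \mapsto x_{n_2+1,n_2+1} R'$ with $R'$ chosen using the slack in $\omega$'s last entry, together with a rescaling of $\Psi$. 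This is the point I would check most carefully, since the normalizations in the statement as written may need to be adjusted to make both directions consistent.
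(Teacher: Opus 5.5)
Your reduction is the paper's: Theorem~\ref{thm:key} with $\psi_1 = \tr_{n_1}$ and $\psi_2 = \omega$, the map $\Phi'(x) = \Psi(E_1 x) + E_2(x)\,(1_{n_1} - \Psi(1_{n_2}))$ with $E_1, E_2$ the corner compressions, and restriction to the corner for the converse. But there is a genuine gap at the one step that carries the content of (2) $\Rightarrow$ (1). You never fix $\Psi$ or prove $R = 1_{n_1} - \Psi(1_{n_2}) \geq 0$, and your fallback cannot help. Unitality forces $R' = 1_{n_1} - \Psi(1_{n_2})$. The condition $\Phi'(b_i \oplus 0) = \frac{1}{n_1} a_i$ pins the multiple at $\Psi = \frac{1}{n_1}\Phi$. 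And positivity of $\Phi'$ already requires $\Psi(1_{n_2}) = \Phi'(1_{n_2} \oplus 0) \leq 1_{n_1}$.

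The missing idea is the trace normalization. The paper reads ``trace-preserving'' as $\tr_{n_1} \circ \Phi = \tr_{n_2}$ with normalized traces; its computation $\psi_1 \circ \phi = \frac{1}{n_1}\tr_{n_1} \circ \Phi = \frac{1}{n_1}\tr_{n_2}$ uses exactly this. Then $\Phi(1_{n_2}) \geq 0$ has unnormalized trace $n_1 \tr_{n_1}(\Phi(1_{n_2})) = n_1$. So its largest eigenvalue is at most $n_1$, and $R \geq 0$. Also $\tr_{n_1}(R) = \frac{n_1 - 1}{n_1}$ and $\tr_{n_1} \circ \frac{1}{n_1}\Phi = \frac{1}{n_1}\tr_{n_2}$. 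Hence $\tr_{n_1} \circ \Phi' = \frac{1}{n_1}\tr_{n_2} \circ E_1 + \frac{n_1 - 1}{n_1} E_2$, which is the $\omega$ of the paper's proof. So the corner must carry the normalized trace, with weight $\frac{1}{n_1 n_2}$ on each $x_{ii}$. The displayed formula with weight $\frac{1}{n_1}$ gives $\omega(1) = \frac{n_1 + n_2 - 1}{n_1}$, which is not a state unless $n_2 = 1$.

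Your worry is justified under the usual convention $\operatorname{Tr} \circ \Phi = \operatorname{Tr}$. Take $k = 1$, $n_2 > n_1$, $b_1 = 1_{n_2}$ and $\Phi(y) = \operatorname{Tr}(y)\, e_{11}$ with $e_{11}$ a matrix unit. Then (2) holds, but $\|a_1'\| = \frac{n_2}{n_1} > 1 = \|b_1'\|$. No unital completely positive map can send $b_1'$ to $a_1'$, so (1) fails. In that convention the scaling would have to be $a/n_2$.

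There is a second, smaller gap in the converse. The element $1_{n_2} \oplus 0$ need not lie in $\mathcal{B}'$: for $k = 1$ and $b_1 = e_{11} \in \M_2$, $\mathcal{B}'$ consists of the matrices $\operatorname{diag}(x,y,y)$. So $y \mapsto n_1 \Phi'(y \oplus 0)$ is only defined on the nonunital C*-algebra $\mathcal{C}$ generated by $b_1, \ldots, b_k$, whose unit is a projection $p$. Theorem~\ref{thm:key} only provides $\Phi'$ on $\mathcal{B}'$; the paper glosses the same point by treating $\Phi'$ as defined on all of $\M_{n_2+1}$. A repair: let $E_{\mathcal{C}}$ be the trace-preserving conditional expectation of $p\M_{n_2}p$ onto $\mathcal{C}$, and set
\[
\Phi(y) = n_1 \Phi'\bigl(E_{\mathcal{C}}(pyp) \oplus 0\bigr) + \tr_{n_2}\bigl((1_{n_2} - p)\, y\, (1_{n_2} - p)\bigr)\, 1_{n_1}.
\]
This map is completely positive. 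It satisfies $\Phi(b_i) = a_i$ since $b_i = p b_i p$. It satisfies $\tr_{n_1} \circ \Phi = \tr_{n_2}$ because $n_1 \omega(z \oplus 0) = \tr_{n_2}(z)$.
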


\begin{proof}
Define $\psi_1 : \M_{n_1} \to \bC$ and $\psi_2 : \M_{n_2} \to \bC$ by
\[
\psi_1 = \tr_{n_1},\quad \psi_2 = \frac{1}{n_1} \tr_{n_2}
\]
We first claim that the existence of a trace-preserving completely positive map $\Phi : \M_{n_2} \to \M_{n_1}$ such that $\Phi(b_i) = a_i$ for all $1 \leq i \leq k$ is equivalent to the existence of a contractive completely positive map $\phi : \M_{n_2} \to \M_{n_1}$ such that $\psi_1 \circ \phi = \psi_2$ and $\phi(b_i) = \frac{1}{n_1} a_i$ for all $1 \leq i \leq k$. 

To see the reverse direction of this claim, observe given $\phi$ as above, the completely positive map $\Phi : \M_{n_2} \to \M_{n_1}$ defined by $\Phi = n_1 \phi$ is trace-preserving and satisfies $\Phi(b_i) = a_i$ for all $1 \leq i \leq k$. Conversely, given $\Phi$ as above, the completely positive map $\phi : \M_{n_2} \to \M_{n_1}$ defined by $\phi = \frac{1}{n_1} \Phi$ satisfies
\[
\psi_1 \circ \phi = \frac{1}{n_1} \tr_{n_1} \circ \Phi = \frac{1}{n_1} \tr_{n_2} = \psi_2,
\]
and $\phi(b_i) = \frac{1}{n_1} a_i$ for all $1 \leq i \leq k$. Furthermore, $\phi$ is completely contractive, since
\[
\|\phi(1_{n_1})\| = \frac{1}{n_1} \|\Phi(1_{n_1})\| \leq \|\Phi(1_{n_1})\|_2 \leq 1,
\]
establishing the claim.

Let $E_1 : \M_{n_2 + 1} \to \M_{n_2}$ denote the map compressing $\M_{n_2+1}$ to the top-left corner of size $n_2$ and let $E_2 : \M_{n_2 + 1} \to \bC$ denote the map compressing $\M_{n_2 + 1}$ to the bottom-right corner of size $1$. Define a state $\omega : \M_{n_2 + 1} \to \bC$ by
\[
\omega = \frac{1}{n_1} \tr_{n_2} \circ E_1 + \frac{n_1 - 1}{n_1} E_2.
\]

Define $a' \in \M_{n_1}^k$ by $a' = \frac{1}{n_1}a$, and define $b' \in \M_{n_2 + 1}^k$ by $b' = (b_1 \oplus 0, \ldots, b_k \oplus 0)$. We next claim that the existence of the map $\phi$ as above is equivalent to the existence of a unital completely positive map $\Phi' : \M_{n_2 + 1} \to \M_{n_1}$ such that $\psi_1 \circ \Phi' = \omega$ and $\Phi'(b'_i) = a'_i$ for all $1 \leq i \leq k$.

To see the forward direction of this claim, assume $\phi$ is given as above. Define $\Phi' : \M_{n_2 + 1} \to \M_{n_1}$ by
\[
\Phi' = \phi \circ E_1 + (1_{n_1} - \phi(1_{n_2})) \cdot E_2,
\]
where $(1_{n_1} - \phi(1_{n_2})) \cdot E_2$ denotes multiplication of $(1_{n_1} - \phi(1_{n_2}))$ by the scalar obtained by applying $E_2$. Then $\Phi'$ is unital and completely positive, $\psi_1 \circ \Phi' = \omega$ and $\Phi'(b'_i) = a'_i$ for all $1 \leq i \leq k$. 

Conversely, given $\Phi'$ as above, let $P : \M_{n_2} \to \M_{n_2 + 1}$ denote the map embedding $\M_{n_2}$ into the top-left corner of $\M_{n_2+1}$. Then the contractive completely positive map $\phi : \M_{n_2} \to \M_{n_1}$ defined by $\phi = \Phi' \circ P$ satisfies $\phi(b_i) = \frac{1}{n_1} a_i$ for all $1 \leq i \leq k$, establishing the claim.

We may now apply Theorem \ref{thm:key} with $\psi_1 = \tr_{n_1}$ and $\omega$, which asserts that a map $\Phi'$ as above exists if and only if
\[
(\id_{\M_p} \otimes \tr_{n_1})(F(a')) \leq (\id_{\M_p} \otimes \omega)(F(b'))
\]
for every $p \leq 2(n_2+1)^2+2$ and every nc majorization function $F : K \to \M_p(\M)$, i.e. if and only if $a' \prec_{\tr_{n_1},\omega} b'$.
\end{proof}

\section{Effective verification of noncommutative majorization}  \label{sec:effective}

The definition of the nc majorization orders in Definition \ref{defn:nc-majorization} requires testing against nc majorization functions which, by \cite{DK2019}*{Theorem 7.4.3}, arise as supremums of typically infinite families of continuous affine nc functions. In this section, we will prove that the nc majorization orders can instead be tested against multivalued nc functions obtained as supremums of finitely many continuous affine nc functions and thus a single one by Example \ref{exam:multiple-affine-to-a-single}. These functions are particularly tractable, although we note that they are not necessarily bounded in the sense of Definition \ref{defn:multivalued-nc-function}.

\begin{thm}
For $m,n_1,n_2 \in \bN$, let $\psi_1 : \M_{n_1} \to \M_m$ and $\psi_2 : \M_{n_2} \to \M_m$ be unital completely positive maps with $\psi_1$ faithful. For $k \in \bN$, and $k$-tuples of self-adjoint matrices $a = (a_1,\ldots,a_k) \in \M_{n_1}^k$ and $b = (b_1,\ldots,b_k) \in \M_{n_2}^k$, the following are equivalent:
\begin{enumerate}
\item $a \prec_{\psi_1,\psi_2} b$;
\item For every $p \in \bN$ with $p \leq 2n^2_2 + 2$, every $q,r_1,\ldots,r_q \in \bN$ and $h_i \in \M_{r_i}(\M_p(\rA(K)))$ for $1 \leq i \leq q$, if $G : K \to \M_p(\M)_{\operatorname{sa}}$ is the multivalued nc function defined by
\[
G(x) = \bigcap_{i=1}^q \{\alpha \in \M_p(\M_n)_{\operatorname{sa}} : 1_{r_i} \otimes \alpha \geq h_i(x)\},
\]
then
\[
(\id_{\M_p} \otimes \psi_1)(G(a)) \leq (\id_{\M_p} \otimes \psi_2)(G(b)).
\]
Here, $K$ is any closed nc $k$-ball containing $a$ and $b$.
\end{enumerate}
\end{thm}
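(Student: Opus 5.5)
The plan is to route both implications through condition (2) of Theorem~\ref{thm:key}, namely the existence of a unital completely positive map $\Phi : \mathcal{B} \to \M_{n_1}$ with $\psi_1 \circ \Phi = \psi_2$ and $\Phi(b_i) = a_i$. Theorem~\ref{thm:key} already identifies this with (1). So it suffices to show two things: that the existence of such a $\Phi$ implies (2), and that (2) implies the existence of such a $\Phi$.

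For ``$\Phi$ exists $\Rightarrow$ (2)'', I will repeat the (2) $\Rightarrow$ (1) argument in the proof of Theorem~\ref{thm:key}. Since $G$ is given explicitly as a finite intersection, no appeal to \cite{DK2019}*{Theorem 7.4.3} is needed, and boundedness of $G$ plays no role. Given $\beta \in G(b)$, set $\alpha = (\id_{\M_p} \otimes \Phi)(\beta)$. The map $\Phi$ is unital and completely positive, and it intertwines affine nc functions because $\Phi(b) = a$. Hence for each $i$,
\[
1_{r_i} \otimes \alpha \geq (\id \otimes \Phi)(h_i(b)) = h_i(a),
\]
so $\alpha \in G(a)$. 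Moreover $(\id_{\M_p} \otimes \psi_1)(\alpha) = (\id_{\M_p} \otimes \psi_2)(\beta)$. One small point to note: $\Phi$ is defined only on $\mathcal{B}$, while $\beta \in \M_p(\M_{n_2})$ need not lie in $\M_p(\mathcal{B})$. I will handle this by extending $\Phi$ to all of $\M_{n_2}$ using Arveson's extension theorem. The extension is still unital and completely positive, still sends $b$ to $a$, and its composition with $\psi_1$ still equals $\psi_2$ on the relevant elements. Only the equality $(\id \otimes \psi_1)(\alpha) = (\id \otimes \psi_2)(\beta)$ on $\beta$ is needed. If this is problematic, I will instead precompose with a conditional expectation onto $\mathcal{B}$ that preserves $\psi_2$. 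I expect this step to need the most care.

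For ``(2) $\Rightarrow$ $\Phi$ exists'', I will rerun the (1) $\Rightarrow$ (2) argument of Theorem~\ref{thm:key} with $p = 2|\mathcal{U}| + 2 \leq 2n_2^2 + 2$. That argument uses $F = \overline{f}$ in only two places. First, the inequality $(\id \otimes \psi_2)(F(b)) \leq (\id \otimes \psi_2)(f(b))$, which says $f(b) \in F(b)$. Second, the description of $F(a)$ via \cite{DK2019}*{Theorem 7.5.1}, which produces the required $\phi$ with barycentre $a$. The plan is to replace $F$ by a finite supremum $G$ of affine functions. I will choose $G \leq f$, so that $G(b) \ni f(b)$, and choose it fine enough that $G(a)$ is still controlled by $F(a)$.

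By Theorem~\ref{thm:DK-7.4.3}, $F(a)$ is the intersection over all affine minorants of $f$. I expect the main obstacle to be passing from this infinite intersection to a finite one. I will argue by compactness and contradiction, as follows. Suppose no $\Phi$ exists. Then the target value $\gamma = (\id \otimes \psi_2)(f(b))$ is not in $(\id \otimes \psi_1)(F(a))$. The set $(\id \otimes \psi_1)(F(a))$ is closed and convex: it is the image of a closed convex set, and faithfulness of $\psi_1$ together with boundedness of $F$ gives properness. A separating functional therefore yields a strict inequality. Each set $\{\alpha : 1_q \otimes \alpha \geq h(a)\}$, intersected with the sublevel set cut out by this separating functional, can be taken compact, again using faithfulness of $\psi_1$ and the lower bound from $F$. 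The finite intersection property then shows that finitely many affine minorants $h_1, \ldots, h_q$ already exclude $\gamma$. Let $G$ be their supremum. Then $G \leq f$, so $\gamma \in (\id \otimes \psi_2)(G(b))$, while $\gamma \notin (\id \otimes \psi_1)(G(a))$. This contradicts (2).

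Affine minorants of arbitrary order $q$ may appear, possibly infinite. I will reduce these to finite $r_i$ by compressing with finite-rank projections, again using compactness. This gives functions of the form required in (2).
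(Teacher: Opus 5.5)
The gap is the step you flagged in ``$\Phi$ exists $\Rightarrow$ (2)'', and neither of your repairs closes it. An Arveson extension $\tilde\Phi:\M_{n_2}\to\M_{n_1}$ does give $1_{r_i}\otimes(\id\otimes\tilde\Phi)(\beta)\geq h_i(a)$. But the identity you then need, $(\id\otimes\psi_1\circ\tilde\Phi)(\beta)=(\id\otimes\psi_2)(\beta)$, evaluates $\psi_1\circ\tilde\Phi$ on the entries of $\beta$. These are arbitrary elements of $\M_{n_2}$, and outside $\mathcal{B}$ nothing forces $\psi_1\circ\tilde\Phi=\psi_2$. A $\psi_2$-preserving conditional expectation onto $\mathcal{B}$ fixes this when one exists (e.g.\ for tracial $\psi_2$, as in Corollary~\ref{cor:bistochastic}), but not in general. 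In fact, in the stated generality the implication is false. Take $k=1$, $n_1=1$, $n_2=m=2$, $\psi_1(\lambda)=\lambda 1_2$ (faithful), $\psi_2=\id_{\M_2}$, $a=0\in\M_1$ and $b=0\in\M_2$. Then $\mathcal{B}=\bC 1_2$, and $\Phi(\lambda 1_2)=\lambda$ has every property required in Theorem~\ref{thm:key}(2). Let $P_1,P_2\in\M_2$ be the rank-one projections $a_1,a_2$ of Example~\ref{exam:difference-between-multivalued-function-evaluation}. Let $G$ be the supremum of the constant functions $P_1\otimes\1$ and $P_2\otimes\1$, so $p=q=2$ and $r_1=r_2=1$. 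The element $\beta=1_2\oplus(P_1+P_2)$ (direct sum along $\M_{n_2}$) lies in $G(b)$, and $(\id\otimes\psi_2)(\beta)=\beta$. An $\alpha\in G(a)$ with $(\id\otimes\psi_1)(\alpha)=\alpha\otimes 1_2\leq\beta$ would satisfy $P_1,P_2\leq\alpha\leq 1_2$ and $\alpha\leq P_1+P_2$. A positive operator below two distinct rank-one projections is zero, so this forces both $\alpha=1_2$ and $\alpha=P_1+P_2$, which is absurd. So (2) fails although $\Phi$ exists. The paper's own proof of (1)$\Rightarrow$(2), and of (2)$\Rightarrow$(1) in Theorem~\ref{thm:key}, applies $\id\otimes\Phi$ to $\beta$ in exactly this way, so following it more carefully will not help.

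The direction (1)$\Rightarrow$(2) is nevertheless true, and you can prove it without $\Phi$ by truncating $G$. Given $\beta\in G(b)$, choose $\lambda\geq\max(\|\beta\|,\max_i\|h_i\|_\infty)$. Let $F_\lambda(x)$ consist of the self-adjoint elements dominating some $\gamma'\in G(x)$ with $\|\gamma'\|\leq\lambda$. Since $G$ is a finite supremum of continuous affine nc functions, $F_\lambda$ has the following properties:
\begin{itemize}
\item it is non-degenerate, unitarily equivariant and upwards directed;
\item its graph is nc convex, since an nc convex combination of elements of norm at most $\lambda$ has norm at most $\lambda$;
\item its graph is closed, by compactness of the $\lambda$-ball;
\item it is bounded by construction.
\end{itemize}
Thus $F_\lambda$ is an nc majorization function with $F_\lambda(x)\subseteq G(x)$ and $\beta\in F_\lambda(b)$. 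Applying (1) to $F_\lambda$ yields $\alpha\in G(a)$ with $(\id\otimes\psi_1)(\alpha)\leq(\id\otimes\psi_2)(\beta)$. In particular, (1) also fails in the example above even though $\Phi$ exists, so the direction (2)$\Rightarrow$(1) of Theorem~\ref{thm:key} is false as stated.

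For (2)$\Rightarrow$(1), your route genuinely differs from the paper's. The paper approximates an arbitrary nc majorization function $F$ from below by a finite supremum $G\leq F$, built from an $\epsilon$-net of a bounded part of the complement of $F(a)$, and proves the inequality for $F$ directly. You instead test (2) only on finite suprema of affine minorants of the single function $f$ from the proof of Theorem~\ref{thm:key}, extract $\Phi$, and then appeal to Theorem~\ref{thm:key}.

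Your compactness step is sound, and simpler than you make it: no separating functional is needed. Under your contrapositive hypothesis, the set $T=\{\alpha:(\id\otimes\psi_1)(\alpha)\leq\gamma,\ \alpha\geq-\|f\|1\}$ is compact by faithfulness of $\psi_1$ and disjoint from $\overline{f}(a)$. So finitely many of the closed sets in Theorem~\ref{thm:DK-7.4.3} (compressed to finite orders), together with the constant minorant $-\|f\|\1$, already miss it. You must exclude \emph{domination} of $\gamma$, not just membership, since for $m>1$ the set $(\id\otimes\psi_1)(G(a))$ need not be upwards closed.

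The advantage of your route is that it uses (2) only at order $2|\mathcal{U}|+2$ and reaches every $p\leq\infty$. The paper's $\epsilon$-net produces $G$ of the same order as $F$, so by itself it only covers $p\leq 2n_2^2+2$. The weakness of your route is that its last step is the false direction above. It is complete only under an extra hypothesis, such as the existence of a unital completely positive $\tilde\Phi:\M_{n_2}\to\M_{n_1}$ extending $\Phi$ with $\psi_1\circ\tilde\Phi=\psi_2$.

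Finally, the argument you rerun never checks $\Phi(b_i)=a_i$. Write $b_i=\sum_e\lambda^{(i)}_e e$, let $\pi_i$ be the $i$th coordinate function, and set $d_i=\pi_i-\sum_e\lambda^{(i)}_e g_e\in\ker\delta_b$. Then $\Phi(b_i)=a_i$ needs $\phi(d_i)=0$. You can force this without changing $p$ by replacing $\pm h^*h$ in $f$ with $\pm(h^*h+\sum_i d_i^*d_i)$, and then using faithfulness and Kadison--Schwarz as before.
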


\begin{proof}
(1) $\Rightarrow$ (2)
Let $G$ be a function as defined in the statement of the theorem. Then letting $\mathcal{B} \subseteq \M_{n_2}$ denote the unital C*-algebra generated by $b_1,\ldots,b_k$, Theorem \ref{thm:key} implies that there is a unital completely positive map $\Phi : \mathcal{B} \to \M_{n_1}$ such that $\psi \circ \Phi = \psi_2$ and $\Phi(b_i) = a_i$ for all $1 \leq i \leq k$. 

For $\alpha \in G(b)$, $1_{r_i} \otimes \alpha \geq h_i(b)$ for all $1 \leq i \leq q$. Hence
\begin{align*}
1_{r_i} \otimes (\id_{\M_p} \otimes \Phi)(\alpha) &= (\id_{\M_{r_i}} \otimes \id_{\M_p} \otimes \Phi)(1_{r_i} \otimes \alpha) \\
&\geq (\id_{\M_{r_i}} \otimes \id_{\M_p} \otimes \Phi)(h_i(b)) \\
&= h_i(a).
\end{align*}
Thus
\[
(\id_{\M_p} \otimes \Phi)(\alpha) \in G(a).
\]
Moreover, since $\psi_1 \circ \Phi = \psi_2$,
\[
(\id_{\M_p} \otimes \psi_1)((\id_{\M_p} \otimes \Phi)(\alpha)) = (\id_{\M_p} \otimes \psi_2)(\alpha).
\]
Hence
\[
(\id_{\M_p} \otimes \psi_1)(G(a)) \leq (\id_{\M_p} \otimes \psi_2)(G(b)),
\]
as required.

(2) $\Rightarrow$ (1)
For $p \in \bN$, let $F : K \to \M_p(\M)$ be an nc majorization function. Since $F$ is bounded, by translating we can assume that $F \geq 0$.

Let $\epsilon > 0$ and, for $\lambda \geq 0$, let
\[
B_\lambda = \{\beta \in (\M_p(\M_m))_{\sa} : \|\beta\| \leq \lambda\}.
\]
We claim that it suffices to prove that there is a multivalued nc function $G$, defined as in the statement of the theorem, such that $G \leq F$ and, for sufficiently large $\lambda \geq 0$,
\begin{equation} \label{eq:affine-1}
((\id_{\M_p} \otimes \psi_1)(F(a)) - \epsilon 1_{pm}) \cap B_\lambda \supseteq \left((\id_{\M_p} \otimes \psi_1)(G(a))\right) \cap B_\lambda.
\end{equation}
Indeed, suppose that this is true. Then since $G \leq F$,
\begin{align*}
((\id_{\M_p} \otimes \psi_1)(F(a))- \epsilon 1_{pm}) \cap B_\lambda &\supseteq ((\id_{\M_p} \otimes \psi_1)(G(a))) \cap B_\lambda  \\
& \supseteq ((\id_{\M_p} \otimes \psi_2)(G(b)))  \cap B_\lambda \\
& \supseteq ((\id_{\M_p} \otimes \psi_2)(F(b)))  \cap B_\lambda.
\end{align*}
Then taking $\lambda \to \infty$ gives
\[
(\id_{\M_p} \otimes \psi_1)(F(a))- \epsilon 1_{pm} \supseteq (\id_{\M_p} \otimes \psi_2)(F(b)).
\]
Since $F$ is lower semicontinuous, $F(a)$ is closed. Thus $(\id_{\M_p} \otimes \psi_1)(F(a))$ is closed. Since $\epsilon$ was arbitrary, it follows that we can also take $\epsilon \to 0$, giving
\[
(\id_{\M_p} \otimes \psi_1)(F(a)) \supseteq (\id_{\M_p} \otimes \psi_2)(F(b)).
\]
Hence
\[
(\id_{\M_p} \otimes \psi_1)(F(a)) \leq (\id_{\M_p} \otimes \psi_2)(F(b)),
\]
establishing the claim.

The rest of the proof is devoted to constructing, for fixed $\epsilon > 0$, a multivalued nc function $G \leq F$ such that for sufficiently large $\lambda \geq 0$, (\ref{eq:affine-1}) holds. 

First note that since $\psi_1$ is faithful, it is bounded from below on positive matrices. Hence for every $\lambda > 0$, there is $\lambda' \geq \lambda$ such that whenever $\beta \in B_\lambda$ with $\beta \geq 0$ is of the form $\beta = (\id_{\M_p} \otimes \psi_1)(\gamma)$ for some $\gamma \in \M_p(\M_m)_{\sa}$ with $\gamma \geq 0$, then $\|\gamma\| \leq \lambda'$.

Define $C \subseteq \M_p(\M_{n_1})^+$ by
\[
C = \{\gamma \in \M_p(\M_{n_1})^+ : \|\gamma\| \leq \lambda' \text{ and } \gamma + \epsilon 1_{p n_1} \notin F(a) \}.
\]
Clearly $C$ is a bounded set of positive matrices, and in particular is precompact. Hence there is $q \in \bN$ and $\gamma_1,\ldots, \gamma_q \in C$ such that $\{\gamma_1,\ldots, \gamma_q\}$ is an $\epsilon$-net for $C$.

By \cite{DK2019}*{Theorem 7.4.3},
\[
F(a) = \bigcap_r \bigcap_{h \leq 1_r \otimes F} \left\{ \alpha \in \M_p(\M_m)_{\sa} : 1_r \otimes \alpha \geq h(a)\right\}
\]
where the intersection is taken over all $r$ and all self-adjoint nc affine functions $h \in \M_r(\M_p(\rA(K)))$ satisfying $h \leq 1_r \otimes F$. For each $1 \leq i \leq q$, since $\gamma_i + \epsilon 1_{p n_1}\notin F(a)$, there is $r_i \in \bN$ and $h_i \in \M_{r_i}(\M_p(\rA(K)))$ such that
\[
\gamma_i + \epsilon 1_{p n_1} \notin \{\alpha \in \M_p(\M_{n_1})_{\sa} : 1_{r_i} \otimes \alpha \geq h_i(a)\}.
\]
Let $r_{q+1} = 1$ and let $h_{q+1} \in \M_p(A(K))$ denote the zero affine function. 

Define the multivalued nc function $G : K \to \M_p(\M)$ by
\[
G(x) = \bigcap_{i=1}^{q+1} \{\alpha \in \M_p(\M_n)_{sa} : 1_{r_i} \otimes \alpha \geq h_i(x)\},
\]
for $n \leq \infty$ and $x \in K_n$.  Then $G \geq 0$ and $G \leq F$ by construction. We claim that (\ref{eq:affine-1}) holds, i.e. that
\[
((\id_{\M_p} \otimes \psi_1)(F(a)) - \epsilon 1_{pm}) \cap B_\lambda \supseteq ((\id_{\M_p} \otimes \psi_1)(G(a))) \cap B_\lambda.
\]
To see this, let $\beta \in ((\id_{\M_p} \otimes \psi_1)(G(a))) \cap B_\lambda$. Since $G \geq 0$, $\beta \geq 0$, so from above there is $\gamma \in G(a)$ such that $\gamma \geq 0$, $\|\gamma\| \leq \lambda'$ and $\beta = (\id_{\M_p} \otimes \psi_1)(\gamma)$.

We claim $\gamma \notin C$. Otherwise there would be $1 \leq i \leq q$ such that $\|\gamma - \gamma_i\| < \epsilon$. Then setting $\gamma' = \gamma - \gamma_i$ yields $\|\gamma'\| < \epsilon$, $1_{p n_1} - \gamma' \geq 0$, and $\gamma_i + \gamma' \in G(a)$. Since $G(a)$ is directed upwards, this would imply
\[
\gamma_i + \epsilon 1_{p n_1} = \gamma_i + \gamma' + (\epsilon 1_{p n_1} - \gamma') \in G(a),
\]
contradicting the construction of $G$.

Since $\gamma \notin C$, it follows that $\gamma \in F(a) - \epsilon 1_{p n_1}$. Hence there is $\gamma_0 \in F(a)$ such that $\gamma = \gamma_0 - \epsilon 1_{p n_1}$. Then
\begin{align*}
\beta &= (\id_{\M_p} \otimes \psi_1)(\gamma) \\
&= (\id_{\M_p} \otimes \psi_1)(\gamma_0 - \epsilon 1_{p n_1}) \\
&\in (\id_{\M_p} \otimes \psi_1)(F(a)) - \epsilon 1_{pm},
\end{align*}
completing the proof.
\end{proof}

\section{Necessity of multivalued noncommutative functions}  \label{sec:necessity}

It is natural to ask whether higher order multivalued nc functions are truly required in the definition of the nc majorization order, or whether they are simply an artifact of our proofs. It would certainly be preferable if it was instead possible to consider only first order continuous convex nc functions. However, as we will demonstrate in this section, the consideration of higher order multivalued nc functions is indeed necessary, in a very strong sense.

The following example demonstrates the necessity of considering multivalued nc functions.

\begin{prop}
There is $n \in \bN$ and $3$-tuples of commuting self-adjoint matrices $a = (a_1,a_2,a_3), b = (b_1,b_2,b_3) \in \M_n^3$ such that
\[
(\id_{\M_p} \otimes \tr_n)(f(a)) \leq (\id_{\M_p} \otimes \tr_n)(f(b))
\]
for every $p \leq \infty$ and every continuous convex nc function $f \in \M_p(\rC(K))$,
but there does not exist a trace-preserving unital completely positive map $\Phi : \M_n \to \M_n$ such that $\Phi(b_i) = a_i$ for all $1 \leq i \leq k$.
\end{prop}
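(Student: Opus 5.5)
The plan is to take both tuples diagonal, so the problem becomes a question about finitely supported probability measures on $\bR^3$. Then I would exploit the gap between all classical convex functions and those scalar functions that arise from \emph{single-valued} continuous convex nc functions.

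\textbf{Step 1: reduce the channel side to a classical problem.} Let $a$ and $b$ be diagonal, with joint eigenvalue points $a^{(1)},\ldots,a^{(n)}$ and $b^{(1)},\ldots,b^{(n)}$ in $\bR^3$. Suppose a bistochastic $\Phi$ with $\Phi(b_i)=a_i$ existed. Compose it on both sides with the trace-preserving conditional expectation onto the diagonal. This yields a doubly stochastic matrix $D$ with $a^{(j)}=\sum_l D_{jl}b^{(l)}$. Jensen's inequality then gives
\[
\sum_j \varphi(a^{(j)})\le\sum_l\varphi(b^{(l)})
\]
for every classical convex $\varphi$ on $\bR^3$. (Conversely, by Strassen this is also sufficient.) So to show that no $\Phi$ exists, it suffices to exhibit one classical convex $\varphi$ violating this inequality. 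A natural choice is a maximum of a few affine functions, for instance $\varphi(x)=\max(\ell_1(x),\ell_2(x),\ell_3(x))$.

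\textbf{Step 2: reduce the nc side to a restricted scalar class.} Let $f\in\M_p(\rC(K))$ be continuous, convex and single-valued. Because $f$ respects direct sums, $f(a)=\bigoplus_j f(a^{(j)})$ up to a unitary. Hence $(\id_{\M_p}\otimes\tr_n)(f(a))=\frac1n\sum_j f(a^{(j)})$, where now $f|_{K_1}:K_1\to(\M_p)_{\sa}$. Compressing by a vector $\xi\in\bC^p$ shows that it suffices to compare $\sum_j g(a^{(j)})$ and $\sum_l g(b^{(l)})$ for the scalar functions $g=\xi^*f|_{K_1}\xi$. The key claim is that these $g$ form a much smaller class than all convex functions. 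This is the noncommutative rigidity phenomenon: convex nc polynomials have degree at most $2$ (Helton--McCullough), and higher powers of coordinates fail to be convex (as remarked in the introduction). Concretely, I would aim to show that nc convexity tested at non-commuting points $x\in K_m$ near scalar points forces each $g$ to be of the form affine plus a positive semidefinite quadratic form, or at least to be dominated in the appropriate averaged sense by such data. I would first prove this for nc polynomials, using the Hessian argument along $t\mapsto x+t\,y$ with non-commuting directions $y$. I would then pass to general $f\in\M_p(\rC(K))$ by the density of nc polynomials in $\rC(K)$ together with a limiting argument on the Jensen inequality.

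\textbf{Step 3: choose the data.} Granting Step 2, the hypothesis of the proposition reduces to two conditions: equal means, $\sum_j a^{(j)}=\sum_l b^{(l)}$, and covariance domination, $\sum_j a^{(j)}a^{(j)T}\le\sum_l b^{(l)}b^{(l)T}$ as $3\times3$ matrices. I would then pick explicit point configurations in $\bR^3$ with equal means and strictly dominated second-moment matrices, but with the $a$-points not lying in the convex order below the $b$-points. An example would be $b$ concentrated near the coordinate axes, $a$ spread on a sphere, and a suitable padding of zeros to equalize $n$. Failure of the convex order is certified by a max-of-affines $\varphi$, whose epigraph corresponds exactly to a \emph{multivalued} supremum, matching the theme of the section. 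Working in dimension $3$ gives enough room for such configurations.

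\textbf{Main obstacle.} The hard step is Step 2, the passage from polynomials to arbitrary continuous convex nc functions of arbitrary order $p$. Degree-$2$ rigidity is known for polynomials, but a continuous limit of non-convex polynomials could a priori be convex with a richer restriction to $K_1$. I would try first to prove directly that convexity of $f$ on all of $K$ forces $t\mapsto\xi^*f(x+ty)\xi$ to have second differences controlled by quadratic data, using finite-difference Jensen inequalities in place of derivatives. If that fails, I would fall back on a weaker statement that still separates the chosen $a$ and $b$: for instance, that the restriction $g$ is determined on the relevant finite point set by its behaviour on a neighbourhood where such quadratic control holds.
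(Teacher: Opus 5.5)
\textbf{The gap is in Step 2.} Your Step 1 is correct, and it is essentially how the paper obtains non-existence of $\Phi$: diagonal data, failure of classical Choquet domination, and \cite{MMS2005}. The problem is the key claim of Step 2, which is false. The worry you raise at the end is exactly what happens.

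Fix $s>r$ and put $f=(s\1-\Re(h_1))^{-1}$.
\begin{itemize}
\item $f$ lies in $\rC(K)$, because $s\1-\Re(h_1)\ge (s-r)\1$.
\item $\Re(\gamma^*x_1\gamma)=\gamma^*\Re(x_1)\gamma$, and $t\mapsto (s-t)^{-1}$ is operator convex on $(-\infty,s)$. The Jensen operator inequality therefore gives $f(\gamma^*x\gamma)\le \gamma^*f(x)\gamma$ for every isometry $\gamma$, so $f$ is a continuous convex nc function.
\item Its restriction to $K_1$ is $t\mapsto (s-t_1)^{-1}$, which is not quadratic.
\item Its Taylor polynomials converge to it uniformly on $K$, yet by the very degree bound you cite, those of degree at least $3$ cannot be convex.
\end{itemize}
The same reasoning shows that $\phi(\sum_i \lambda_i \Re(h_i))$ is convex for every operator convex $\phi$ and every $\lambda\in\bR^3$. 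There are also genuinely multivariable examples: $\Re(h_1)(s\1-\Re(h_2))^{-1}\Re(h_1)$ is convex and restricts to $u^2/(s-v)$.

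Consequently, equal means and second-moment domination are necessary (test against $\pm h_i$ and $(\sum_i\lambda_i\Re(h_i))^2$) but not sufficient. Step 3 therefore never verifies the hypothesis of the proposition. A counterexample already exists in one coordinate. Take $\mu'=\frac12(\delta_{-0.99}+\delta_{0.99})$ and $\nu'=\frac15\delta_{-2}+\frac45\delta_{1/2}$. They have equal means and $\int t^2\,d\mu'<\int t^2\,d\nu'$. Yet on the ball of radius $2$ with $s=2.01$,
$\int (s-t)^{-1}\,d\mu'\approx 0.657>0.580\approx \int (s-t)^{-1}\,d\nu'$.

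\textbf{What the paper does instead.} The paper never tries to characterize restrictions of convex nc functions to $K_1$. It takes explicit diagonal data with $n=15$ in the plane $t_1+t_2+t_3=1$ (the centroid $y$ and the points $x_i$). It then imports the inequality over all continuous convex nc functions from the Davidson--Kennedy comparison of the nc and classical Choquet orders (Theorem 9.2 of \cite{DK2021} together with Theorem 8.5.1 of \cite{DK2019}). Such an external certificate, not a degree-two rigidity statement, is what your argument is missing.

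\textbf{A caution about the paper's data.} Your necessary conditions are still worth applying to it. As printed, $\nu=\frac45\delta_y+\frac15\mu$, which is classically dominated by $\mu$. The convex nc function $\sum_i h_i^*h_i$ then gives $\tr_{15}(\sum_i a_i^2)=\frac38>\frac{41}{120}=\tr_{15}(\sum_i b_i^2)$, so the displayed example cannot be right as stated. Presumably the atoms $\delta_{x_i}$ of $\nu$ should be the vertices $\delta_{e_i}$. With that change:
\begin{itemize}
\item the means agree;
\item the second-moment matrices differ by $\frac{11}{720}(3\cdot 1_3-J)\ge 0$, where $J$ is the all-ones matrix;
\item $\mu$ is still not classically dominated by $\nu$, since any dilation of $\mu$ supported on $\{y,e_1,e_2,e_3\}$ puts mass at most $\frac34<\frac45$ at $y$.
\end{itemize}
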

\begin{proof}
Let $n = 15$. Let $\{e_1, e_2, e_3\}$ be the standard basis for $\bC^3$ and define $y,x_1,x_2,x_3 \in \bC^3$ by
\[
y = \frac{1}{3} \sum^3_{k=1} e_k, \quad x_i = \frac{1}{4}\left(e_i + \sum^3_{j=1} e_j\right)
\]
for $1 \leq i \leq 3$. Let $a_1,a_2,a_3 \in \M_n$ be diagonal matrices with joint spectral measure
\[
\mu = \frac{1}{15} \sum^3_{i=1} 5 \delta_{x_i},
\]
i.e. each $x_i$ appears as a joint eigenvalue with multiplicity $5$. Similarly, let $b_1,b_2,b_3 \in \M_n^3$ be diagonal matrices with joint spectral measure
\[
\nu = \frac{12}{15} \delta_y + \frac{1}{15} \sum^3_{i=1} \delta_{x_i}
\]
i.e. $y$ appears as a joint eigenvalue with multiplicity $12$ and each $x_i$ appears as a joint eigenvalue with multiplicity $1$.

It follows from \cite{DK2021}*{Theorem 9.2} and \cite{DK2019}*{Theorem 8.5.1} that
\[
(\id_{\M_p} \otimes \tr_n)(f(a)) \leq (\id_{\M_p} \otimes \tr_n)(f(b)),
\]
for every $p \leq \infty$ and every continuous convex nc function $f \in \M_p(C(K))$. However, it follows from \cite{DK2021}*{Theorem 9.2} that $\mu$ is not dominated by $\nu$ in the classical Choquet order on probability measures, so the results of \cite{MMS2005} imply that there does not exist a trace-preserving unital completely positive map $\Phi : \M_n \to \M_n$ such that $\Phi(b_i) = a_i$ for all $1 \leq i \leq 3$.
\end{proof}

The following example demonstrates the necessity of considering higher order multivalued nc functions.

\begin{prop}
There exists $k,n \in \bN$ and self-adjoint $k$-tuples $a = (a_1,\ldots, a_k), b = (b_1,\ldots,b_k) \in \M_n^k$ such that for every $l \in \bN$ and $h_1,\ldots,h_l \in \rA(K)$, if $G : K \to \M$ is the multivalued nc function defined by
\[
G(x) = \bigcap_{i=1}^l \{\alpha \in (\M_m)_{\sa} : \alpha \geq a_i(x)\}
\]
for $m \in \bN$ and $x \in K_m$, then
\[
\tr_n(G(a)) \leq \tr_n(G(b)),
\]
but there does not exist a unital quantum channel $\Phi : \M_n \to \M_n$ such that $\Phi(b_i) = a_i$ for all $1 \leq i \leq k$. Here $K$ is any closed nc $k$-ball containing $a$ and $b$.
\end{prop}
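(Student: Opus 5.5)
The plan is to build an explicit counterexample in which $b$ is a commuting (diagonal) tuple and $a$ is a non-commuting tuple. Here "unital quantum channel" means a trace-preserving unital completely positive map, and the $a_i(x)$ in the displayed definition of $G$ should be read as the affine functions $h_i(x)$.

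The example cannot have $a$ commuting as well. If $a$ and $b$ are both diagonal, a pinching argument shows that the test in the statement reduces to the classical one. The conditional expectation onto the diagonal preserves the trace and the constraints $\alpha \geq h_i(a)$, so the least trace in $G(a)$ is $\int \max_i h_i \, d\mu$, where $\mu$ is the joint spectral measure of $a$. Since finite maxima of affine functions are dense in the continuous convex functions, the test becomes $\mu \prec \nu$ in the Choquet order. With uniform atom weights, a Choquet dilation is a doubly stochastic matrix, and this gives a unital channel. So the noncommutativity of $a$ is essential, and the first-order scalar tests must fail to see it.

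\textbf{Step 1: reformulate both sides.} Fix $b = \operatorname{diag}$ with joint eigenvalues $y_1,\ldots,y_n \in \bR^k$, each with weight $1/n$.

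For the channel side: any unital completely positive $\Phi$ with $\Phi(b_i) = a_i$ restricts on the diagonal algebra to a POVM $P_j = \Phi(e_{jj})$. So a unital channel exists if and only if there are $P_j \geq 0$ with
\[
\sum_j P_j = 1_n, \qquad \tr_n(P_j) = \tfrac{1}{n}, \qquad a_i = \sum_j (y_j)_i P_j .
\]
To compose with the trace-preserving conditional expectation we need this on all of $\M_n$, but that is harmless.

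For the test side: by semidefinite programming duality,
\[
\min \{\tr_n(\alpha) : \alpha \geq h_l(a)\ \forall l\} = \max \Bigl\{\sum_l \tr(\rho_l h_l(a)) : \rho_l \geq 0,\ \sum_l \rho_l = \tfrac1n 1_n\Bigr\}.
\]
Because the trace is scalar and $G(a)$ is directed upwards, $\tr_n(G(a)) \leq \tr_n(G(b))$ is equivalent to this maximum being at most $\frac1n\sum_j \max_l h_l(y_j)$, by the pinching computation above applied to $b$. This is a single scalar inequality for each finite family of affine functions $h_l = c_l + \langle w_l, \cdot\rangle$.

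\textbf{Step 2: choose the example.} I would take $k=2$ and $y_j$ the vertices of a regular polygon (or of a triangle, with $n=3$ or $n=4$). For $a$ I would take a pair of non-commuting matrices whose joint matricial range sits inside the polygon and whose "scalar moments" match the barycentric data, for instance Pauli-type matrices suitably scaled and shifted.

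The dual side then decomposes as $\sum_l \tr(\rho_l h_l(a))$. Each term is $\tr(\rho_l)$ times the value of $h_l$ at the barycenter point $\tr(\rho_l a)/\tr(\rho_l) \in W_1(a)$, the joint numerical range. So the first-order test only sees the ``classical'' measure $\sum_l \tr(\rho_l)\,\delta_{\tr(\rho_l a)/\tr(\rho_l)}$, a probability measure on $W_1(a)$ with barycenter $\tr_n(a)$. The test therefore passes exactly when every such measure is dominated by $\nu$ against convex functions. I would arrange this by making $W_1(a)$ lie in the convex hull of the $y_j$ with enough slack. For example, I would choose $a$ with small norm and $\tr_n(a) = \frac1n\sum_j y_j$, and verify the inequality via the Choquet condition on $W_1(a)$.

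\textbf{Step 3: rule out the POVM.} I would show that no POVM with equal traces $\frac1n$ realizes $a$, using the rigidity forced when the $y_j$ are affinely independent. For a triangle with $n=3$, the $P_j$ are determined linearly by $a_1, a_2, 1_3$. Then choosing $a$ so that the resulting $P_j$ has a negative eigenvalue, while still keeping $W_1(a)$ inside the triangle, gives the contradiction. The case $n=3$ is the cleanest because the $P_j$ are forced, so positivity is the only obstruction. The freedom is that $W_1(a) \subseteq \operatorname{conv}\{y_j\}$ does not imply that the matrices $P_j$ are positive, since matrix convexity is strictly stronger than scalar containment.

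\textbf{Main obstacle.} The hard part is the quantitative balance in Steps 2 and 3. The first-order test must be verified for every finite family $h_1,\ldots,h_l$, not just $W_1(a)$ containment. The Choquet-type condition from Step 2 has to be checked against the fixed uniform measure $\nu$, and it also constrains how far $W_1(a)$ can reach toward the vertices. I would first try $n=3$ with a real symmetric $2\times 2$ block construction embedded in $\M_3$, compute the forced $P_j$ explicitly, and then tune one parameter to make some $P_j$ non-positive while keeping the dual inequality valid.
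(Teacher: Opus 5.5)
Your reformulation in Steps 1--2 is correct: the first-order test holds exactly when every classical measure $\sum_l \tr(\rho_l)\,\delta_{\tr(\rho_l a)/\tr(\rho_l)}$ from Step 2 is dominated by $\nu$. The gap is that the proposal never exhibits a pair $(a,b)$, and the concrete route you commit to in Step 3 cannot work. For $n=3$ and affinely independent $y_1,y_2,y_3$, the forced elements are $P_j=\lambda_j(a)$, where $\lambda_j$ is the $j$-th barycentric coordinate. This is a \emph{scalar} affine combination of $1_3,a_1,a_2$. So for a unit vector $v$ we have $\langle P_j v,v\rangle=\lambda_j(\langle a_1v,v\rangle,\langle a_2v,v\rangle)$, and hence $P_j\ge 0$ if and only if $\lambda_j\ge 0$ on $W_1(a)$. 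Thus $W_1(a)\subseteq\operatorname{conv}\{y_j\}$ \emph{does} force every $P_j\ge 0$; your claim that matrix positivity is strictly stronger than scalar containment fails precisely for simplices. Worse, the test enforces this itself. For the family $\{0,-\lambda_j\}$ one has $\min\tr_3(G(b))=0$ but $\min\tr_3(G(a))=\tr_3((-\lambda_j(a))_+)$. The single functions $\pm\lambda_j$ then give $\tr_3(P_j)=\lambda_j(\tr_3(a))=\frac13$. So for a triangle the first-order test always implies that a unital channel exists, and Steps 2 and 3 are incompatible. Your ``small norm'' heuristic also points the wrong way: the set of $a$ admitting a channel is convex and contains $\tr_n(a)1_n$, so shrinking $a$ toward that point only helps a channel exist.

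The paper avoids any quantitative balancing by using the transpose. It takes $b_1,\dots,b_4$ to be a self-adjoint basis of $\M_2$ and $a_i=b_i^T$. For scalar affine $h$ one has $h(a)=h(b)^T$, and transposition is a trace-preserving order automorphism, so $G(b)=G(a)^T$ and the traces agree. But the only linear map with $b_i\mapsto b_i^T$ is the transpose, which is not completely positive. If you want to keep your commuting-$b$ framework, you need a support that is not a simplex, and the square works. Take $b=(\operatorname{diag}(1,1,-1,-1),\operatorname{diag}(1,-1,1,-1))$ and $a=(\sigma_z\oplus 0_2,\ \sigma_x\oplus 0_2)$, where $\sigma_z=\left[\begin{smallmatrix}1&0\\0&-1\end{smallmatrix}\right]$ and $\sigma_x=\left[\begin{smallmatrix}0&1\\1&0\end{smallmatrix}\right]$. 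Combine your Step 2 with the explicit description of measures dominated by the uniform measure on the vertices of $[-1,1]^2$. This reduces the test to $\tr_4(a)=0$, $\|a_i\|\le 1$ and $\tr_4\bigl((\tfrac{a_1\pm a_2}{2})_+\bigr)\le\frac14$, and both of the latter quantities equal $\frac{1}{4\sqrt2}$. On the other hand, any admissible POVM compresses to a POVM on $\bC^2$ that jointly refines the projections $\frac12(1_2+\sigma_z)$ and $\frac12(1_2+\sigma_x)$. This is impossible: the only positive matrix below both is $0$, and then the fourth element would be $-\frac12(\sigma_z+\sigma_x)\not\ge 0$.
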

\begin{proof}
Let $b_1,\ldots, b_4$ be a self-adjoint basis of $\M_2$, and let $a_k = b_k^T$ (the transpose of $b_k)$ for all $1 \leq i \leq 4$. Then $a = (a_1,a_2,a_3,a_4), b = (b_1,b_2,b_3,b_4) \in \M_2^4$ are self-adjoint tuples of matrices. If $\Phi : \M_n \to \M_n$ is a linear map such that $\Phi(b_i) = a_i$ for all $1 \leq i \leq 4$, then $\Phi$ must be the transpose map, which is not completely positive. Hence there is no trace-preserving unital completely positive map $\Phi : \M_2 \to \M_2$ such that $\Phi(b_i) = a_i$ for all $1 \leq i \leq 4$. 

However, it follows from Example \ref{exam:nc-k-ball-functions} that for $h \in \rA(K)$ and $\alpha \in (\M_n)_{\sa}$, $\alpha \geq h(a)$ if and only if $\alpha^T \geq h(b)$. Since the transpose map is trace-preserving, it follows that
\[
\tr(G(a)) = \tr(G(b))
\]
for all multivalued nc functions $G$ as above.
\end{proof}

\section{Mixed unitary quantum channels} \label{sec:mixed}

For $n \in \bN$, a unital completely positive map $\Phi : \M_n \to \M_n$ is said to be {\em mixed unitary} if there is $k \in \bN$, unitaries $u_1,\ldots,u_k \in \M_n$, and $t_1,\ldots,t_k \in [0,1]$ with $\sum_{i=1}^k t_i = 1$ such that $\Phi$ is of the form
\[
\Phi(x) = \frac{1}{k} \sum_{i=1}^k u_i x u_i^*, \quad \text{for all }x \in \M_n.
\]
Note that a mixed unitary completely positive map is unital and trace-preserving. In fact, mixed unitary maps are, in some sense, the ``ideal'' examples of such maps.

Tregrub \cite{T1986} showed that for $n = 2$, every trace-preserving unital completely positive map is mixed unitary. However, he also showed that for every $n \geq 3$, there are trace-preserving unital completely positive maps that are not mixed unitary. This stimulated a great deal of work on the topic of bistochastic quantum channels, culminating in the disproof by Haagerup and Musat \cite{HM2011} of the asymptotic quantum Birkhoff conjecture.

Corollary \ref{cor:bistochastic} asserts that for $k \in \bN$ and $k$-tuples of self-adjoint matrices $a = (a_1,\ldots,a_k), b = (b_1,\ldots,b_k) \in \M_n^k$ satisfying $a \prec b$, there is a trace-preserving unital completely positive map $\Phi : \M_n \to \M_n$ such that $\Phi(b_i) = a_i$ for all $1 \leq i \leq k$. It is natural to ask if the map $\Phi$ can be chosen to be mixed unitary.

For $n \in \bN$ and $k=1$, it follows easily from Theorem \ref{intro-thm:classical-characterizations} that this question has a positive answer. Also, for $n \leq 2$ and $k \in \bN$, this question has a positive answer by Tregrub's theorem.

On the other hand, it follows from the results of Tregrub that if $n \geq 3$ and $k \geq n^2$, then this question has a negative answer. To see this, let $n \geq 3$ and let $\Phi : \M_n \to \M_n$ be a trace-preserving completely positive map that is not mixed unitary. Let $\{b_1,\ldots,b_{n^2}\}$ be a self-adjoint basis for $\M_n$ and define $a,b \in \M_n^{n^2}$ by
\[
b = (b_1,\ldots,b_{n^2}), \quad a = (\Phi(b_1),\ldots,\Phi(b_{n^2})).
\]
By construction, $a \prec b$ and $\Phi(b_i) = a_i$ for all $1 \leq i \leq k$.  Clearly $\Phi$ is the unique trace-preserving completely positive map with this property and was chosen to be not mixed unitary.

For $n \in \bN$ and $k = 2$, this question has an interesting interpretation. Observe that taking real and imaginary parts implements a bijection between $\M_n$ and self-adjoint tuples in $\M_n^2$. So the question is equivalent to asking if every trace-preserving unital completely positive map on $\M_n$ is ``locally'' mixed unitary, in the following sense.

\begin{defn}
For $n \in \bN$, we will say that a trace-preserving unital completely positive map $\Phi : \M_n \to \M_n$ is {\em locally mixed unitary} if for every $x \in \M_n$ there is $k \in \bN$, unitaries $u_1,\ldots,u_k \in \M_n$, and $t_1,\ldots,t_k \in [0,1]$ with $\sum_{i=1}^k t_i = 1$ such that
\[
\Phi(x) = \frac{1}{k} \sum_{i=1}^k u_i x u_i^*.
\]
\end{defn}

We will show by example that for all $n \geq 4$, there are trace-preserving unital completely positive maps on $\M_n$ that are not locally mixed unitary. Since the examples will be Schur multipliers, we now briefly review some basic facts about these maps.

Recall that for $n \in \bN$ and matrices $a = [a_{ij}], b = [b_{ij}] \in \M_n$, the Schur product of $a$ and $b$ is the matrix $a \ast b \in \M_n$ defined by
\[
a \ast b = [a_{ij} b_{ij}],
\] 

\begin{defn}
For $n \in \bN$ and a matrix $c \in \M_n$, the corresponding Schur multiplier is the linear map $T_c : \M_n \to \M_n$ defined by
\[
T_c(x) = c \ast x, \quad x \in \M_n.
\]
\end{defn}

\begin{rem}
We recall the well known fact that for $c \in \M_n$, the corresponding Schur multiplier $T_c$ is unital and completely positive if and only if $c \geq 0$ and $c_{ii} = 1$ for all $1 \leq i \leq n$.  
\end{rem}

We are grateful to Laurent Marcoux for several helpful conversations on this topic, and for providing us with the following result.

\begin{prop} \label{prop:local-mixed-equals-mixed}
For $n \in \bN$, let $c = [c_{ij}] \in \M_n$ be a matrix with $c \geq 0$ and $c_{ii} = 1$ for all $1 \leq i \leq n$, so that the corresponding Schur multiplier $T_c : \M_n \to \M_n$ is unital and completely positive. If $T_c$ is locally mixed unitary, then it is mixed unitary.
\end{prop}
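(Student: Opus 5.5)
The plan is to reduce the statement to a decomposition of $c$ itself. The target is
\[
c = \sum_{i} t_i\, w_i w_i^*, \qquad w_i \in \bC^n,\ |w_i(j)| = 1 \text{ for all } j,\ t_i \geq 0,\ \sum_i t_i = 1 .
\]
Such a decomposition suffices: setting $d_i = \operatorname{diag}(w_i)$, each $d_i$ is a diagonal unitary, and
\[
T_c(x) = c \ast x = \sum_i t_i\, (w_iw_i^*) \ast x = \sum_i t_i\, d_i x d_i^*
\]
for every $x \in \M_n$. So $T_c$ is mixed unitary. The whole task is therefore to extract such a decomposition of $c$ from the local hypothesis applied to a few cleverly chosen matrices $x$.

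The naive choice $x = J$, the all-ones matrix, does not work on its own. It gives $c = T_c(J) = \sum_i t_i u_i J u_i^* = \sum_i t_i\, n\, v_iv_i^*$ with $v_i = u_i\xi$ and $\xi = n^{-1/2}(1,\ldots,1)^T$. This only expresses $c$ as an average of rank-one projections scaled by $n$, and it does not force $|v_i(j)| = n^{-1/2}$, which is what we need. I would therefore test against $x_\lambda = \lambda D + J$, where $D = \operatorname{diag}(d_1,\ldots,d_n)$ has distinct real entries and $\lambda \to \infty$. Since $T_c$ fixes diagonal matrices, $T_c(x_\lambda) = \lambda D + c$. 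The hypothesis then gives unitaries $u_i^{(\lambda)}$ and weights $t_i^{(\lambda)}$ with
\[
\sum_i t_i^{(\lambda)} u_i^{(\lambda)} (\lambda D + J) u_i^{(\lambda)*} = \lambda D + c .
\]
By Carathéodory the number of terms can be bounded by $2n^2+1$, so compactness lets us pass to a subsequence along which the unitaries and weights converge.

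Dividing by $\lambda$ and letting $\lambda \to \infty$ gives $\sum_i t_i u_i D u_i^* = D$ for the limiting unitaries. Each point of the unitary orbit of $D$ is an extreme point of its convex hull: it is extreme among matrices with that spectrum, for example by strict convexity of the Hilbert--Schmidt norm on that orbit. Hence $u_i D u_i^* = D$ for every $i$ with $t_i > 0$. Because the $d_j$ are distinct, each such limiting $u_i$ is then a diagonal unitary $d_i$. Plugging into the $\lambda$-independent part, we would like to conclude
\[
c = \lim_{\lambda\to\infty}\sum_i t_i^{(\lambda)} u_i^{(\lambda)} J u_i^{(\lambda)*} = \sum_i t_i\, d_i J d_i^* ,
\]
which is exactly the desired decomposition with $w_i = d_i(1,\ldots,1)^T$.

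The main obstacle is the cross term $\lambda\bigl(\sum_i t_i^{(\lambda)} u_i^{(\lambda)} D u_i^{(\lambda)*} - D\bigr)$, which a priori is only $O(1)$ and not $o(1)$. The conclusion above is valid only if its limit has zero off-diagonal part; indeed, if the cross term did not vanish, the limiting identity would express $c$ minus that term, not $c$ itself.

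My first attempt would be to write $u_i^{(\lambda)} = d_i^{(\lambda)} e^{\,\mathrm{i} H_i^{(\lambda)}}$ with $H_i^{(\lambda)}$ self-adjoint, off-diagonal and small. The cross term then equals $\lambda\sum_i t_i\, d_i[\mathrm{i}H_i, D]d_i^* + O(\lambda\|H\|^2)$. I would then exploit the $\lambda$-scaled identity to show this off-diagonal contribution is forced to vanish. Two tools seem natural here. One is the trace functional $y \mapsto \tr_n(Dy)$, using von Neumann's trace inequality to bound $\tr_n(D\,u(\lambda D+J)u^*)$ from above. The other is the Hilbert--Schmidt norm, which cannot increase under mixed unitary averaging: $\|\lambda D + c\|_2 \leq \|\lambda D + J\|_2$. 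Together these should force $\lambda\|H_i^{(\lambda)}\|^2 \to 0$, hence that the first-order term averages to zero.

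If this estimate cannot be closed, the fallback is to replace $J$ in $x_\lambda$ by another matrix that separates the off-diagonal entries, and to run the same limiting argument entrywise.
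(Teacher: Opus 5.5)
There is a genuine gap, and it cannot be closed along your route. Your reduction to a decomposition $c = \sum_i t_i w_i w_i^*$ with unimodular $w_i$ is correct. So is your strict-convexity argument forcing unitaries that fix $D$ to be diagonal. The problem is the cross term.

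Your test matrices $x_\lambda = \lambda D + J$ are self-adjoint, and on self-adjoint matrices the hypothesis carries no information. Since $c_{ii}=1$, the map $T_c$ is unital and trace-preserving. So by Theorem~\ref{intro-thm:classical-characterizations}, every self-adjoint $x$ satisfies $T_c(x) \prec x$, and hence $T_c(x)$ lies in the convex hull of the unitary orbit of $x$. This holds for \emph{every} positive $c$ with unit diagonal. If your limiting argument worked, it would show that every unital Schur multiplier is mixed unitary, contradicting Example~\ref{exam:no-mixed-unitaries-for-2}.

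Consequently, for $c = c(s)$ the off-diagonal part of $\lambda\bigl(\sum_i t_i^{(\lambda)} u_i^{(\lambda)} D u_i^{(\lambda)*} - D\bigr)$ genuinely does not tend to zero, whatever decompositions you pick. Tools that only encode bistochasticity, such as the Hilbert--Schmidt contraction or von Neumann's trace inequality, cannot remove it. The same objection applies to your fallback with any other self-adjoint replacement for $J$. The sketched estimate is also flawed on its own terms: $\lambda\|H_i^{(\lambda)}\|^2 \to 0$ is compatible with $\|H_i^{(\lambda)}\| \sim 1/\lambda$. In that case the first-order term $\lambda\sum_i t_i d_i[\mathrm{i}H_i, D]d_i^*$ is of order one, not zero.

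The missing idea is to test against a non-self-adjoint matrix. That is where local mixed unitarity has content, since it is really a statement about pairs of self-adjoint matrices. Take $x = D + \mathrm{i}J$. For unitary $u$ we have $\Re(uxu^*) = u\Re(x)u^*$ and $\Im(uxu^*) = u\Im(x)u^*$, and $T_c$ preserves adjoints. So the single identity $D + \mathrm{i}c = T_c(x) = \sum_k t_k u_k x u_k^*$ splits exactly into
$$D = \sum_k t_k u_k D u_k^* \quad\text{and}\quad c = \sum_k t_k u_k J u_k^*,$$
with the \emph{same} unitaries in both. Your extreme-point argument applied to the first identity shows that every $u_k$ with $t_k>0$ is diagonal. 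The second identity is then your target decomposition with $w_k = u_k(1,\ldots,1)^T$. No scaling parameter, compactness argument or cross-term analysis is needed; this is exactly the paper's proof.
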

\begin{proof}
Suppose that $T_c$ is locally mixed unitary. Let $h \in \M_n$ be a diagonal matrix with distinct diagonal entries and let $q \in \M_n$ be a matrix with every entry equal to $1$. Then $T_c(h) = h$ and $T_c(q) = c$. Let $a = h + iq$. 

Since $T_c$ is locally mixed unitary, there is $m \in \bN$, unitaries $u_1,\ldots,u_m \in \M_n$, and $t_1,\ldots,t_m \in [0,1]$ with $\sum_{k=1}^m t_k = 1$ such that
\[
T_c(a) = \sum_{k=1}^m t_k u_k x u_k^*.
\]
Since $T_c$ is positive,
\[
\begin{split}
h &= T_c(\operatorname{Re}(a)) = \operatorname{Re}(T_c(a)) = \sum_{k=1}^m t_k \operatorname{Re}(u_i a u_k^*) \\
&= \sum_{k=1}^m t_k u_k \operatorname{Re}(a) u_k^* = \sum_{k=1}^m t_k u_k h u_k^*,
\end{split}
\]
and it is easy to check that this implies $u_k h = h u_k$ for all $1 \leq k \leq m$. Then since $h$ is a diagonal matrix with distinct diagonal entries, this forces each $u_k$ to also be a diagonal matrix. Hence we can write each $u_k$ as
\[
u_k = \operatorname{diag}(\omega_1^{(k)},\ldots,\omega_n^{(k)})
\]
for $\omega_1^{(kj)},\ldots,\omega_n^{(k)} \in \bT$.

Similarly,
\[
\begin{split}
c &= T_c(\operatorname{Im}(a)) = \operatorname{Im}(T_c(a)) = \sum_{k=1}^m t_k \operatorname{Im}(u_k a u_k^*) \\
&= \sum_{k=1}^m t_k u_k \operatorname{Im}(a) u_k^* = \sum_{k=1}^m t_k u_k q u_k^*,
\end{split}
\]
and it follows from a simple computation that
\[
c_{ij} = \sum_{k=1}^m t_k \omega_i^{(k)} \overline{\omega_j^{(k)}}
\]
for all $1 \leq i,j \leq n$.

Therefore, for arbitrary $x = [x_{ij}] \in \M_n$,
\begin{align*}
T_c(x) &= [c_{ij} x_{ij}] \\
&= \left[ \sum_{k=1}^m t_k \omega_i^{(k)} \overline{\omega_j^{(k)}} x_{ij} \right] \\
&= \sum_{k=1}^m t_k \left[ \omega_i^{(k)} \overline{\omega_j^{(k)}} x_{ij} \right] \\
&= \sum_{k=1}^m t_k u_k^* x u_k,
\end{align*}
showing that $T_c$ is mixed unitary.
\end{proof}

Applying Proposition \ref{prop:local-mixed-equals-mixed} along with the results from \cite{HM2011}, we obtain examples of trace-preserving unital completely positive maps that are not locally mixed unitary.

\begin{exam}\label{exam:no-mixed-unitaries-for-2}
For $0 < s < 1$, let
\[
c(s) = \begin{bmatrix} 1 & \sqrt{s} & \sqrt{s} &  \sqrt{s} \\  \sqrt{s} & s & s &s \\ \sqrt{s} & s & s & s \\  \sqrt{s} & s & s & s \end{bmatrix} + (1-s) \begin{bmatrix} 0 & 0 & 0 & 0 \\ 0 &1 & \omega & \overline{\omega} \\ 0 & \overline{\omega} & 1 & \omega \\ 0 & \omega & \overline{\omega} & 1 \end{bmatrix}, \]
where $\omega = e^{2 \pi i/3}$ is a cube root of unity.  By \cite{HM2011}*{Example 3.2}, $T_{c(s)}$ is not factorizable, and so is not a mixed unitary quantum channel.  However, $c(s)$ is positive so $T_{c(s)}$ is completely positive, and it is easy to verify that $T_{c(s)}$ is trace-preserving and unital.  Thus $T_{c(s)}$ is a trace-preserving unital completely positive map that is not mixed unitary. Furthermore, Proposition \ref{prop:local-mixed-equals-mixed} implies that $T_{c(s)}$ is not locally mixed unitary.

In particular, define $a = (a_1,a_2),\ b = (b_1,b_2) \in \M_4^2$ by
\[
b_1 = \begin{bmatrix} 1 & 0 & 0 & 0 \\ 0 & 2 & 0 & 0 \\ 0 & 0 & 3 & 0 \\ 0 & 0 & 0 & 4 \end{bmatrix}, \quad b_ 2= \begin{bmatrix} 1 & 1 & 1 & 1 \\ 1 & 1 & 1 & 1 \\ 1 & 1 & 1 & 1 \\ 1 & 1 & 1 & 1 \end{bmatrix},
\]
and $a_i = T_{c(s)}(b_i)$ for $1 \leq i \leq 2$. Then it follows from Corollary \ref{cor:bistochastic} that $a \prec b$, and by construction $\Phi = T_{c(s)}$ is a trace-preserving unital completely positive map satisfying $\Phi(b_i) = a_i$ for all $1 \leq i \leq 2$, but there is no locally mixed unitary unital completely positive map with this property.
\end{exam}

For any pair $b = (b_1, b_2) \in (\M_n)^2_{\sa}$, it is elementary to verify that $(\tr_n(b_1)1_n, \tr_n(b_2)1_n) \prec (b_1, b_2)$.  Thus non-commuting pairs can majorize commuting pairs.  The following provides an example of a commuting pair that majorizes a non-commuting pair.

\begin{exam}
Define $a,b \in \M_4$ by
\[
a = \frac{1}{\sqrt{3}}\begin{bmatrix}
\sqrt{3} i & 1 & 1 & i \\
1 & \sqrt{3} i & 1 & -i \\
1 & -1 & \sqrt{3} i & i \\
-i & i & -i & \sqrt{3} i
\end{bmatrix}, \quad b = \begin{bmatrix}
i & 1 & 1 & -i \\
1 & i & -i & 1 \\
1 & -i & i & 1 \\
-i & 1 & 1 & i
\end{bmatrix}.
\]
Define $\Phi : \M_4 \to \M_4$ by $\Phi = T_{c(s)}$ for $s = \frac{1}{3}$, where $T_{c(s)}$ is the Schur multiplier from Example \ref{exam:no-mixed-unitaries-for-2}. Then $\Phi$ is a trace-preserving unital completely positive map and $\Phi(b) = a$. Note that $a$ is not normal but $b$ is normal. Thus $a = (\Re(a), \Im(a))$ is a non-commuting pair of self-adjoint matrices and $b = (\Re(b), \Im(b))$ is a commuting pair of self-adjoint matrices, yet $a \prec b$ by Corollary \ref{cor:bistochastic}. The existence of such an example is not surprising since, for example, the convex hull of the joint unitary orbit of a pair of commuting operators must be a subset of pairs of self-adjoint operators, but there is no reason to expect all such convex combinations  still commute.
\end{exam}



\begin{thebibliography}{99}


%
%

%

%
%
%
%
%
%

%
%
%
%

%
%



\bib{DK2021}{article}{
    author={Davidson, K.R.},
    author={Kennedy, M.},
    title={Choquet order and hyperrigidity for function systems},
    journal={Adv. Math.},
    volume={385},
    year={2021},
    pages={107774}
}


\bib{DK2024}{article}{
    author={Davidson, K.R.},
    author={Kennedy, M.},
    title={Noncommutative Choquet theory: A Survey},
    eprint={arXiv:2412.09455},
    year={2024},
    pages={27}
}


\bib{DK2019}{article}{
    author={Davidson, K.R.},
    author={Kennedy, M.},
    title={Noncommutative choquet theory},
	journal={Mem. Amer. Math. Soc.},
    year={2025},
    volume={316},
    number={1608},
    pages={83 pages}
}

%
%

\bib{GJBDM2018}{article}{
    author={Gour, G.},
    author={Jennings, D.},
    author={Buscemi, F.},
    author={Duan, R.},
    author={Marvian, I.},
    title={Quantum majorization and a complete set of entropic conditions for quantum thermodynamics},
    journal={Nature communications},
    volume={9},
    number={1},
    year={2018},
    pages={5352}
}

\bib{HLP1952}{book}{
  title={Inequalities},
  author={Hardy, G.H.},
  author={Littlewood, J.E.},
  author={P{\'o}lya, G.},
  year={1952},
  publisher={Cambridge university press}
}

\bib{HM2011}{article}{
    author={Haargerup, U.},
    author={Musat, M.},
    title={Factorization and dilation problems for completely positive maps on von Neumann algebras},
    journal={Commun. Math. Phys.},
    volume={303},
    number={2},
    year={2011},
    pages={555-594}
} 
 

\bib{HKM2017}{article}{
    author={Helton, J.W.},
    author={Klep, I.},
    author={McCullough, S.},
    title={The tracial Hahn-Banach theorem, polar duals, matrix convex sets, and projections of free spectrahedra},
    journal={J. Eur. Math. Soc.},
    volume={19},
    number={6},
    year={2017},
    pages={1845-1897}
} 
%
%
%
%
%
%
%
%

\bib{K1951}{article}{
  author  = {Kadison, Richard V.},
  title   = {Order properties of bounded self-adjoint operators},
  journal = {Proceedings of the National Academy of Sciences},
  volume  = {37},
  number  = {4},
  pages   = {201--203},
  year    = {1951},
  publisher = {National Acad Sciences}
}

%
%
%


\bib{MMS2005}{article}{
    author={Mart{\'\i}nez Per{\'\i}a, F.},
    author={Massey, P.},
    author={Silvestre, L.},
    title={Weak matrix majorization},
    journal={Linear Algebra Appl},
    volume={403},
    number={3},
    date={2005},
    pages={343-368}
}

%
%
%
%
%
%
%
%
%
%
%

\bib{T1986}{article}{
  author  = {Tregub, S. L.},
  title   = {Bistochastic operators on finite-dimensional von {N}eumann algebras},
  journal = {Izv. Vyssh. Uchebn. Zaved. Mat.},
  number  = {3},
  pages   = {75--77},
  year    = {1986},
  note    = {English translation: \emph{Soviet Math. (Iz. VUZ)} \textbf{30}(3): 105--108, 1986}
}

\bib{U1970}{article}{
  author  = {Uhlmann, Armin},
  title   = {S{\"a}tze {\"u}ber die {T}ransformation von {D}ichtematrizen},
  journal = {Wiss. Z. Karl-Marx-Univ. Leipzig Math.-Natur. Reihe},
  volume  = {19},
  pages   = {571--575},
  year    = {1970}
}

%
%



\end{thebibliography}
\end{document}